\newcommand{\figref}[1]{{Figure~\ref{#1}}}
\newcommand {\bea}{\begin{eqnarray}}
\newcommand {\ea}{\end{eqnarray}}
\newtheorem{proposition}{Proposition}[section]
\newtheorem{theorem}{Theorem}[section]
\newtheorem{Assumption}{Assumption}[section]
\newtheorem{lemma}{Lemma}[section]
\newtheorem{remark}{Remark}[section]
\newtheorem{definition}{Definition}[section]
\newtheorem{corollary}{Corollary}[section]
\newenvironment{proof}[1][Proof]{\textbf{#1.} }{\hspace{\stretch{1}}\rule{0.5em}{0.5em}}
\newcommand{\Dt}{\Delta t}
\newcommand{\thmref}[1]{{Theorem~\ref{#1}}}
\newcommand{\lemref}[1]{{Lemma~\ref{#1}}}
\newcommand{\secref}[1]{{Section~\ref{#1}}}
\newcommand{\assref}[1]{{Assumption~\ref{#1}}}
\newcommand{\propref}[1]{{Proposition~\ref{#1}}}
\newcommand{\coref}[1]{{Corollary~\ref{#1}}}
\journal{Stochastic Processes and their Applications}
\begin{document}
\begin{frontmatter}
\title{Strong convergence  of a stochastic Rosenbrock-type scheme for the finite element discretization of  semilinear   SPDEs driven by  multiplicative  and additive noise}

\author[jdm]{Jean Daniel Mukam}
\ead{jean.d.mukam@aims-senegal.org}
\address[jdm]{Fakult\"{a}t f\"{u}r Mathematik, Technische Universit\"{a}t Chemnitz, 09126 Chemnitz, Germany}

\author[at,atb,atc]{Antoine Tambue}
\cortext[cor1]{Corresponding author}
\ead{antonio@aims.ac.za}
\address[at]{Department of Computer science, Electrical engineering and Mathematical sciences,  Western Norway University of Applied Sciences, Inndalsveien 28, 5063 Bergen, Norway.}
\address[atb]{Center for Research in Computational and Applied Mechanics (CERECAM), and Department of Mathematics and Applied Mathematics, University of Cape Town, 7701 Rondebosch, South Africa.}
\address[atc]{The African Institute for Mathematical Sciences(AIMS) of South Africa, 6-8 Melrose Road, Muizenberg 7945, South Africa.}

%

\begin{abstract}

This paper aims to investigate  the numerical approximation of a general second order  parabolic 
stochastic partial differential equation(SPDE) driven by multiplicative and additive noise. 
Our main interest is on such SPDEs where the nonlinear part is stronger than the linear part, 
usually called stochastic dominated transport equations. Most standard numerical schemes lose their
good stability properties on such equations, including the current linear implicit Euler method.
We discretize the SPDE in space  by the finite element method and   propose a novel  scheme  called stochastic Rosenbrock-type scheme for
temporal discretization.
Our scheme is based on the local linearisation of the semi-discrete problem obtained after space
 discretization and is  more appropriate for such equations.
We  provide  a strong convergence of the new fully discrete scheme toward the exact solution for
multiplicative and additive noise and obtained optimal rates of convergence.
Numerical experiments to sustain our theoretical results are provided.
\end{abstract}

\begin{keyword}
Rosenbrock-type  scheme \sep Stochastic partial differential equations \sep  Multiplicative \& Additive  noise \sep Strong  convergence \sep Finite  element method.

\end{keyword}
\end{frontmatter}
\section{Introduction}
\label{intro}
We consider  the numerical approximation  of the following  SPDE
\begin{eqnarray}
\label{model} 
dX(t)+[AX(t)+F(X(t))]dt=B(X(t))dW(t), \quad X(0)=X_0, \quad t\in(0,T],
\end{eqnarray}
 in the Hilbert space $L^2(\Lambda)$, where $\Lambda\subset \mathbb{R}^d$, $d=1,2,3$ is bounded with smooth boundary, $T>0$ is the final time, $F$ and $B$ are nonlinear functions,  $X_0$ is the initial data which  is random and 
$A$ is a linear operator, unbounded, not necessary self-adjoint.
 Precise assumptions on $F$, $B$, $X_0$ and $A$  will be given in the next section. 
Equations of type \eqref{model} are used to model many real world phenomena in different fields such as biology, chemistry, 
physics \cite{Shardlow,SebaGatam,Antofirst,ATthesis}. In many cases 
explicit solutions of SPDEs are unknown, therefore numerical approximations are powerful tools  to provide realistic approximations.
Numerical approximation of SPDE  \eqref{model}  is therefore an active research area and has attracted a lot 
of attentions since two decades (see e.g. \cite{Antonio1,Xiaojie1,Xiaojie2,Raphael,Jentzen1,Jentzen2,Yan1,Yan2,Shardlow,Printems,Kovac1}).
 Due to the time step restriction of the explicit Euler method, linear implicit Euler method is used in many situations.
Linear implicit Euler method has been largely  investigated in the literature (see e.g. \cite{Raphael,Xiaojie2,AntGaby,Antjd2} and the references therein).
The resolvent operator $(\mathbf{I}+\Delta tA_h)^{-1}$ plays a key role to stabilise the linear implicit Euler method,
where $A_h$ is 
the discrete version of $A$, obtained after the space discretization. Such approach is justified when the linear operator $A$ is strong. 
Indeed, when $A$ is stronger than $F$,  the linear operator $A$ drives 
the SPDE \eqref{model} and the good stability properties of the linear implicit Euler method and exponential integrators are guaranteed.
In more concrete applications, the nonlinear function $F$ can be stronger. Typical examples are stochastic reaction equations with stiff 
reaction term. For such equations, both linear implicit Euler method \cite{Raphael,Xiaojie2,AntGaby,Antjd2} and exponential 
integrators \cite{Antonio1,Xiaojie1,Jentzen1} behave like the standard explicit Euler method (see Section \ref{faiblesse}) 
and therefore lose their good stabilities properties. For such problems in the deterministic context,
exponential Rosenbrock-type  methods \cite{AntBerre,Ostermann2} and Rosenbrock-type methods \cite{AntBerre,Ostermann1,Ostermann3} were proved to be efficient. 
Recently, the exponential Rosenbrock method was extended to the case of stochastic 
partial differential equations \cite{Antjd1} and was proved to be very stable for stochastic reactive dominated transport equations. 
However the computation of the stochastic exponential  matrix functions involve was far to be efficient.
Since solving linear systems are more straightforward than computing  the exponential of a matrix, it is important to develop alternative methods
based on the resolution of linear systems, which may be more efficient  if the appropriate preconditionners are used.
In this paper, we propose a novel scheme based on the  combination of the  Rosenbrock-type method 
 and the linear implicit Euler method. The resulting numerical scheme that we call stochastic Rosenbrock-type scheme(SROS) is stable and efficient in contrast to  the exponential scheme in  \cite{Antjd1}, which is only stable. 
 The space discretization is performed  using the finite element method and our novel scheme is based on the local linearization of
the nonlinear drift part  of the semi-discrete problem obtained after spatial discretization.  
The local linearization therefore weakens the nonlinear part of the drift function so that the linearized semi-disrete problem is driven by its new linear part, which changes at each time step.
The standard linear implicit Euler method \cite{Raphael,Xiaojie2} is  then applied to the linearized semi-discrete problem. 
This  combination  yields  our novel  SROS scheme. We analyze the strong convergence of the novel fully discrete scheme toward the exact solution in   the root-mean-square $L^2$-norm.
The main challenge here  comes from the  fact that  the resolvent  operator $S^m_{h,\Delta t}(\omega)$ appearing in the numerical scheme \eqref{implicit1} 
is not constant as it changes at each time step.  Furthermore  the operator $S^m_{h,\Delta t}(\omega)$ is a random operator. To address those challenges,
we provide in Section \ref{Preparation} novel stability estimates to handle the composition of the perturbed random resolvent operators,
useful in our convergence analysis. 
 The results indicate how the convergence orders depend on the regularity of the initial data and the noise. More precisely, we achieve the optimal convergence orders $\mathcal{O}\left(h^{\beta}+\Delta t^{\frac{\min(\beta,1)}{2}}\right)$
for multiplicative noise and the  optimal convergence orders $\mathcal{O}\left(h^{\beta}+\Delta t^{\frac{\beta}{2}-\epsilon}\right)$ for additive noise, where $\beta$ is the regularity's parameter of the noise (see \assref{assumption2}) and $\epsilon>0$ is an arbitrary number small enough.

The rest of this paper is organized as follows.  \secref{wellposed} deals with the well posedness problem, the numerical scheme and the main results.
In  \secref{convergenceproof}, we provide some error estimates for the deterministic homogeneous problem as preparatory 
results along with proof of the main results.  \secref{numexperiment} provides some numerical experiments to sustain the theoretical findings. 
Those numerical experiments  show the efficiency of the novel scheme comparing to the exponential scheme developed in  \cite{Antjd1}.
 
  \section{Mathematical setting and main results}
\label{wellposed}
\subsection{Main assumptions and well posedness problem}
\label{notation}
Let us define functional spaces, norms and notations that  will be used in the rest of the paper. 
Let  $\left(H,\langle.,.\rangle_H,\Vert .\Vert\right)$ be a separable Hilbert space.  For all $p\geq 2$ and for a Banach space $U$,
we denote by $L^p(\Omega, U)$ the Banach space of all equivalence classes of $p$ integrable $U$-valued random variables. We denote  by $L(U,H)$ 
 the space of bounded linear mappings from $U$ to $H$ endowed with the usual  operator norm $\Vert .\Vert_{L(U,H)}$. By  $\mathcal{L}_2(U,H):=HS(U,H)$, we  denote the space of Hilbert-Schmidt operators from $U$ to $H$. 
  We equip $\mathcal{L}_2(U,H)$ with the norm
 \begin{eqnarray}
 \label{def1}
 \Vert l\Vert^2_{\mathcal{L}_2(U,H)} :=\sum_{i=1}^{\infty}\Vert l\psi_i\Vert^2,\quad l\in \mathcal{L}_2(U,H),
 \end{eqnarray}
 where $(\psi_i)_{i=1}^{\infty}$ is an orthonormal basis of $U$. Note that \eqref{def1} is independent of the orthonormal basis of $U$.
  For simplicity, we use the notations $L(U,U)=:L(U)$ and $\mathcal{L}_2(U,U)=:\mathcal{L}_2(U)$. It is well known that for all $l\in L(U,H)$ and $l_1\in\mathcal{L}_2(U)$, $ll_1\in\mathcal{L}_2(U,H)$ and 
  \begin{eqnarray}
  \label{trace1}
  \Vert ll_1\Vert_{\mathcal{L}_2(U,H)}\leq \Vert l\Vert_{L(U,H)}\Vert l_1\Vert_{\mathcal{L}_2(U)}.
  \end{eqnarray}
  In the rest of this paper, we take $H=L^2(\Lambda)$.
In order to ensure the existence and the uniqueness of  the solution of \eqref{model}, and for the purpose of the convergence analysis, we make the following assumptions.
\begin{Assumption}\textbf{[Linear operator $A$]}
\label{assumption1}
$-A : \mathcal{D}(A)\subset H\longrightarrow H$ is the   generator of an analytic semigroup $S(t)=:e^{-At}$ on $L^2(\Lambda)$, i.e.  $S(t)$ is given as follows \cite{Henry,Pazy,Klaus,Suzuki} 
\begin{eqnarray*}
S(t)=\frac{1}{2\pi i}\int_{\mathcal{C}}e^{-t\lambda}(\lambda\mathbf{I}-A)^{-1}d\lambda, \quad t>0,
\end{eqnarray*}
where $\mathcal{C}$ denotes a path that surrounds the spectrum of $-A$. 
\end{Assumption}
\begin{Assumption}\textbf{[Initial value $X_0$]}
\label{assumption2}
The initial value  $X_0$ belongs to $L^p\left(\Omega, \mathcal{D}\left((A)^{\frac{\beta}{2}}\right)\right)$, for some $\beta\in(0,2]$ and some  $p\in[2,\infty)$.
\end{Assumption}

As in the current literature for deterministic Rosenbrock-type methods \cite{Ostermann3,Ostermann1}, deterministic exponential Rosemnbrock-type method \cite{Ostermann2,Antjd3} 
and stochastic exponential Rosenbrock-type methods \cite{Antjd1}, we make the following assumption on the nonlinear drift term.
\begin{Assumption}\textbf{[Nonlinear term $F$]}
\label{assumption3} 
The nonlinear map  $F: H\longrightarrow H$  is Fr\'{e}chet differentiable  with bounded derivative, i.e. there exists a constant $b>0$ such that 
\begin{eqnarray}
\label{jac1}
 \Vert F'(u)\Vert_{L(H)}\leq b, \quad  u\in H.
\end{eqnarray}
Moreover, as in \cite[Page 6]{Lang} for deterministic Rosenbrock-type method, we assume that the resolvent set of $-A-F'(u)$ contains $(0, \infty)$ for all $u\in H$.
\end{Assumption}

\begin{remark}
\label{remark1}
Inequality \eqref{jac1} together with the mean value theorem show that there exists a constant  $C_F=C_F(b)\geq 0$ such that 
\begin{eqnarray}
\label{reviewinequal1}
\Vert F(u)-F(v)\Vert \leq C_F\Vert u-v\Vert, \quad u,v\in H.
\end{eqnarray}
In addition, if $\Vert F(0)\Vert < \infty$, then from \eqref{reviewinequal1} there exists a  constant $C=(C_F, \Vert F(0)\Vert)\geq 0$ such that
\begin{eqnarray*}
\Vert F(u)\Vert\leq \Vert F(0)\Vert+\Vert F(u)-F(0)\Vert\leq \Vert F(0)\Vert+C_F\Vert u\Vert\leq C (1+\Vert u\Vert), \quad u\in H.
\end{eqnarray*}
\end{remark}
\begin{remark}
\label{remark2}
An illustrative example for which the resolvent set of $-A-F'(u)$ contains $(0,\infty)$ 
is obtained when  $A$ generates  a contraction semigroup and  the derivative of the nonlinear drift term $F$  satisfies the following coercivity condition
\begin{eqnarray}
\label{Lang1}
\left\langle F'(u)v, v\right\rangle_H \geq 0,\quad u,v\in H.
\end{eqnarray}
In fact, it follows from \eqref{Lang1} that $-F'(u)$ is an relatively $A$-bounded  and  dissipative operator with $A$-bound $a_0=0$ (see e.g. \cite[Chapter III, Definition 2.1]{Klaus}). Therefore, from \cite[Chapter III, Theorem 2.7]{Klaus}, it follows that $-A-F'(u)$ is a generator of a contraction semigroup. Hence, for all $u\in H$ $(0, \infty)\subset\rho\left(-A-F'(u)\right)$. 
\end{remark}
\begin{remark}
\label{remark3}
The condition $(0, \infty)\subset\rho\left(-A-F'(u)\right)$  on \assref{assumption3} can be relaxed, but the drawback is that the resolvent 
set of the perturbed semigroup is smaller than that of the initial semigroup.
\end{remark}
 Let $\left(\Omega,\mathcal{F}, \mathbb{P}\right)$ be a probability space and $ \{\mathcal{F}_t\}_{t\in[0, T]}$ a normal filtration on $\left(\Omega,\mathcal{F}, \mathbb{P}\right)$, 
  that is  $ \{\mathcal{F}_t\}_{t\in[0, T]}$ is a filtration on  $\left(\Omega,\mathcal{F}, \mathbb{P}\right)$ satisfying the following (see e.g. \cite[Definition 2.1.11]{Prevot}):
 \begin{itemize}
 \item $\mathcal{F}_0$ contains all elements $O\in\mathcal{F}$ with $\mathbb{P}(O)=0$,
 \item $\mathcal{F}_t=\mathcal{F}_{t^+}:=\bigcap\limits_{s>t}\mathcal{F}_s$ for all $t\in[0, T]$.
 \end{itemize}
 Let $Q :H\longrightarrow H$ be a linear selfadjoint and positive operator. In this work, the noise  $W(t)=W(x,t)$ is assumed to be an $H$-valued $Q$-Wiener process defined in the filtered probability space $\left(\Omega,\mathcal{F}, \mathbb{P}, \{\mathcal{F}_t\}_{t\geq 0}\right)$. Let us recall below the definition of a $Q$-Wiener process.
 \begin{definition}\textbf{[$Q$-Wiener process]}\cite[Definition 2.1.12]{Prevot}. An $H$-valued stochastic process $\{W(t): t\geq 0\}$ is called $Q$-Wiener process if
 \begin{enumerate}
\item[(i)] $W(0)=0$ almost surely (a.s).
\item[(ii)] The application $ t\longmapsto W(t,\omega)$ is  continuous  from $\mathbb{R}^{+}$ to $H$ for every $\omega\in\Omega$.
\item[(iii)] $W(t)$ is $\mathcal{F}_t$-adapted and $W(t)-W(s)$ is independent of $\mathcal{F}_s$ for $s<t$.
\item[(iv)]For all $0\leq s\leq t$, the random variable $W(t)-W(s)$ follows a normal distribution with mean $0$ and covariance operator $(t-s)Q$. We write  $W(t)-W(s)\sim \mathcal{N}\left(0, (t-s)Q\right)$. 
\end{enumerate}
\end{definition}
It is well known that if $Q$ has finite trace \footnote{in this case $W(t)$ is called trace class noise}, then the $Q$-Wiener process $W(t)$ can be represented as follows \cite[Proposition 2.1.10]{Prevot}
\begin{eqnarray}
\label{noise}
W(x,t)=\sum_{i\in \mathbb{N}}\sqrt{q_i}e_i(x)\beta_i(t), \quad t\in[0,T],\quad x\in \Lambda,
\end{eqnarray} 
where $q_i$, $e_i$, $i\in\mathbb{N}$ are respectively the eigenvalues and the eigenfunctions of the covariance operator $Q$,
and $\beta_i$ are independent and identically distributed standard Brownian motions. 
 
 The space of Hilbert-Schmidt operators from  $Q^{\frac{1}{2}}(H)$ to $H$ is denoted by $L^0_2=\mathcal{L}_2(Q^{\frac{1}{2}}(H),H)=:HS(Q^{\frac{1}{2}}(H),H)$  
 with the corresponding norm $\Vert.\Vert_{L^0_2}$  defined by 
\begin{eqnarray}
\label{def2}
\Vert l\Vert_{L^0_2} :=\Vert lQ^{\frac{1}{2}}\Vert_{HS}=\left(\sum_{i=1}^{\infty}\Vert lQ^{\frac{1}{2}}e_i\Vert^2\right)^{\frac{1}{2}}, \quad  l\in L^0_2,
\end{eqnarray} 
where $(e_i)_{i=1}^{\infty}$ is an orthonormal basis  of $H$.
Note that \eqref{def2} is  also independent of the orthonormal basis of $H$. Following \cite[Chapter 7]{Prato} or \cite{Yan2,Antonio1,Arnulf1,Raphael}, we make the following assumption on the diffusion term.
 \begin{Assumption}\textbf{[Diffusion term ]} 
 \label{assumption4}
  The operator  $B : H \longrightarrow L^0_2$ satisfies the global Lipschitz condition, i.e. there exists a positive constant $C_B$ such that 
 \begin{eqnarray*}
 \Vert B(0)\Vert_{L^0_2}\leq C_B,\quad \Vert B(u)-B(v)\Vert_{L_2^0}\leq C_B\Vert u-v\Vert, \quad u,v\in H.
 \end{eqnarray*}
  \end{Assumption}
 As a consequence of \assref{assumption4},  it holds that
 \begin{eqnarray*}
 \Vert B(u)\Vert_{L^0_2}\leq \Vert B(0)\Vert_{L^0_2}+\Vert B(u)-B(0)\Vert_{L^0_2}\leq \Vert B(0)\Vert_{L^0_2}+C_B\Vert u\Vert
 \leq C_B(1+\Vert u\Vert), \quad  u\in H.
 \end{eqnarray*}

We equip $V_{\alpha}:=\mathcal{D}(A^{\frac{\alpha}{2}})$, $\alpha\in \mathbb{R}$ with the norm  $\Vert v\Vert_{\alpha}:=\Vert A^{\frac{\alpha}{2}}v\Vert$, for all $v\in V_{\alpha}$. 
It is well known that $(V_{\alpha}, \Vert .\Vert_{\alpha})$ is a Banach space \cite{Henry}.

To establish  our root-mean-square $L^2$ strong convergence result when dealing with multiplicative noise, we will also need the following further assumption on the diffusion 
term when $\beta \in [1,2)$, which was also used in  \cite{Arnulf1,Stig1} to achieve optimal  regularity  rates  in space  and time, and in \cite{Antonio1,Raphael,Antjd1} 
to achieve optimal strong convergence rates.
\begin{Assumption}
\label{assumption5}
There exists a positive constant $c\geq 0$ such that 
\begin{eqnarray*}
B\left(\mathcal{D}\left(A^{\frac{(\beta-1)}{2}}\right)\right)\subset HS\left(Q^{\frac{1}{2}}(H),\mathcal{D}\left(A^{\frac{(\beta-1)}{2}}\right)\right)\; \text{and}\;
\left\Vert A^{\frac{(\beta-1)}{2}}B(v)\right\Vert_{L^0_2}\leq c\left(1+\Vert v\Vert_{\beta-1}\right)
\end{eqnarray*}
  for all $v\in\mathcal{D}\left(A^{\frac{(\beta-1)}{2}}\right)$, where $\beta$ comes from \assref{assumption2}.
\end{Assumption}
Typical examples  fulfilling \assref{assumption5} are stochastic reaction diffusion equations (see e.g.  \cite[Section 4]{Arnulf1}).

When dealing with additive noise (i.e. when $B=\mathbf{I}$), the strong convergence proof will make use of the following assumption, also used in \cite{Xiaojie2,Xiaojie1,Antjd1}.
 \begin{Assumption}
 \label{assumption6a}
The covariance operator $Q$ satisfies  the following estimate
 \begin{eqnarray}
 \left\Vert A^{\frac{\beta-1}{2}}Q^{\frac{1}{2}}\right\Vert_{\mathcal{L}_2(H)}\leq C_Q, 
 \end{eqnarray}
  where $\beta$ comes from \assref{assumption2} and $C_Q$ is a positive constant.
 \end{Assumption}
 
 When dealing with additive noise, to achieve   convergence order  greater than $\frac{1}{2}$ in time, we  use  the following further  assumption on the nonlinear function, also used in \cite{Xiaojie1,Xiaojie2,Antjd1}.
 \begin{Assumption}
 \label{assumption6b}
 The deterministic mapping $F: H\longrightarrow H$ is twice differentiable and there exist   two constants $L\geq 0$ and $\eta\in(0,2)$ such that
 \begin{eqnarray*}
 \Vert F'(u)v\Vert\leq L\Vert v\Vert,\quad
 \Vert F''(u)(v_1,v_2)\Vert_{-\eta}\leq L\Vert v_1\Vert.\Vert v_2\Vert,\quad u,v,v_1,v_2\in H.
 \end{eqnarray*}
 \end{Assumption}
The following  proposition will be useful in the rest of the paper. 
 \begin{proposition}\textbf{[Smoothing properties of the semigroup]}\cite{Henry}
 \label{theorem1}
 \label{prop1} Let $\alpha > 0$, $\delta\geq 0$  and $0\leq \gamma\leq 1$, then there exists a constant $C>0$ such that 
 \begin{eqnarray*}
 \Vert A^{\delta}S(t)\Vert_{L(H)}\leq Ct^{-\delta}, \quad t>0;\quad 
 \Vert A^{-\gamma}\left(\mathbf{I}-S(t)\right)\Vert_{L(H)}\leq Ct^{\gamma}, \quad t\geq 0;\\
 A^{\delta}S(t)=S(t)A^{\delta} \quad \text{on} \quad \mathcal{D}(A^{\delta})\quad \text{and}\quad 
 \Vert D^l_tS(t)v\Vert_{\delta}\leq Ct^{-l-\frac{(\delta-\alpha)}{2}}\Vert v\Vert_{\alpha},\quad t>0,\quad v\in\mathcal{D}(A^{\frac{\alpha}{2}});
 \end{eqnarray*}
 where $l=0,1$, 
 and  $D^l_t=\dfrac{d^l}{dt^l}$. Moreover, if $\delta\geq \gamma$ then  $\mathcal{D}(A^{\delta})\subset \mathcal{D}(A^{\gamma})$.
 \end{proposition}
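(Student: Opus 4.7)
The plan is to invoke the classical theory of analytic semigroups and fractional powers of sectorial operators. Since by \assref{assumption1} the operator $-A$ generates an analytic semigroup, $A$ is sectorial (possibly after a spectral shift), and its fractional powers $A^{\delta}$ are well defined via the Dunford--Riesz functional calculus. The main tool throughout is the contour representation
\[
S(t)=\frac{1}{2\pi i}\int_{\Gamma}e^{-\lambda t}(\lambda\mathbf{I}+A)^{-1}d\lambda,
\]
where $\Gamma$ is a suitable sectorial contour avoiding the spectrum of $-A$, together with the sectorial resolvent bound $\Vert(\lambda\mathbf{I}+A)^{-1}\Vert_{L(H)}\leq M/|\lambda|$ valid on $\Gamma$.

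For the bound on $\Vert A^{\delta}S(t)\Vert_{L(H)}$, I would insert $\lambda^{\delta}$ into the contour integral to obtain $A^{\delta}S(t)=(2\pi i)^{-1}\int_{\Gamma}\lambda^{\delta}e^{-\lambda t}(\lambda\mathbf{I}+A)^{-1}d\lambda$, then perform the change of variables $\mu=\lambda t$ to extract the factor $t^{-\delta}$. The commutation identity $A^{\delta}S(t)=S(t)A^{\delta}$ on $\mathcal{D}(A^{\delta})$ is a standard consequence of the fact that both operators arise from the same functional calculus applied to $A$. For the bound on $\Vert A^{-\gamma}(\mathbf{I}-S(t))\Vert_{L(H)}$, I would start from the identity $\mathbf{I}-S(t)=\int_0^t AS(s)\,ds$ valid on $\mathcal{D}(A)$, commute $A^{-\gamma}$ inside to get $A^{-\gamma}(\mathbf{I}-S(t))=\int_0^t A^{1-\gamma}S(s)\,ds$, and apply the previous bound with exponent $1-\gamma\in[0,1]$; the resulting integrable singularity $s^{\gamma-1}$ yields the desired $t^{\gamma}$ factor, and density of $\mathcal{D}(A)$ in $H$ extends the estimate to the whole space.

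For the derivative bound, combining $D_tS(t)=-AS(t)$ with the commutation property reduces the $l=1$ case to the $l=0$ case with $\delta$ replaced by $\delta+2$; the $l=0$ case itself is handled by writing $A^{\delta/2}S(t)v=A^{(\delta-\alpha)/2}S(t)A^{\alpha/2}v$ and distinguishing whether $A^{(\delta-\alpha)/2}$ is a bounded multiplier ($\delta\leq\alpha$) or requires the first bound ($\delta>\alpha$). Finally, the inclusion $\mathcal{D}(A^{\delta})\subset\mathcal{D}(A^{\gamma})$ when $\delta\geq\gamma$ follows from writing $A^{\gamma}v=A^{\gamma-\delta}(A^{\delta}v)$ with $A^{\gamma-\delta}$ bounded since $\gamma-\delta\leq 0$. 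The main obstacle is purely technical: justifying the convergence and manipulations of the Dunford contour integrals carefully, in particular the branch cut needed to define $\lambda^{\delta}$ holomorphically along $\Gamma$. Since this proposition is entirely standard, the cleanest route is to cite \cite{Henry} and related monographs on analytic semigroup theory, as the authors do, rather than reconstruct the argument.
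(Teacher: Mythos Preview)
Your proposal is correct and matches the paper's treatment: the paper does not prove this proposition at all but simply cites it from \cite{Henry}, and your sketch accurately reconstructs the standard analytic-semigroup arguments found there. Your closing remark that the cleanest route is to cite \cite{Henry} is exactly what the authors do.
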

The well posedness result is given in the following  theorem.
\begin{theorem}\cite[Theorem 7.2]{Prato}\\
\label{theorem2}
Let  Assumptions \ref{assumption1}, \ref{assumption3} and \ref{assumption4} be satisfied. If $X_0$ is a $\mathcal{F}_0$-measurable $H$-valued random variable, 
then there exists a unique mild solution $X$ of \eqref{model}, which has the following representation  
\begin{eqnarray}
\label{mild1}
X(t)=S(t)X_0-\int_0^tS(t-s)F(X(s))ds+\int_0^tS(t-s)B(X(s))dW(s),\quad t\in(0,T]
\end{eqnarray}
 and  satisfies 
\begin{eqnarray*}
\mathbb{P}\left[\int_0^T\Vert X(s)\Vert^2ds<\infty\right]=1.
\end{eqnarray*}
Moreover, for any $p\geq 2$, there exists a constant $C=C(p,T)>0$ such that 
\begin{eqnarray*}
\sup_{t\in[0,T]}\mathbb{E}\Vert X(t)\Vert^p\leq C(1+\mathbb{E}\Vert X_0\Vert^p).
\end{eqnarray*}
\end{theorem}

\subsection{Finite element discretization}
\label{fullerror}
In the rest of the paper, to simplify the presentation,  we consider  the linear operator $A$ to be of second-order. More precisely, 
we consider the  SPDE \eqref{model} to be a second-order semilinear parabolic SPDE of  the following form
\begin{eqnarray}
\label{secondorder}
dX(t,x)+[-\nabla \cdot \left(\mathbf{D}\nabla X(t,x)\right)+\mathbf{q} \cdot \nabla X(t,x)]dt+f(x,X(t,x))dt=b(x,X(t,x))dW(t,x), 
\end{eqnarray}
for $x\in\Lambda$ and $t\in(0,T]$, 
where the functions $f : \Lambda\times \mathbb{R}\longrightarrow \mathbb{R}$ and
$b : \Lambda\times\mathbb{R}\longrightarrow \mathbb{R}$ are continuously differentiable with globally bounded derivatives.
In the abstract framework \eqref{model}, the linear operator $A$ is  the   $L^2(\Lambda)$  realization \cite[p. 812]{Suzuki}  of the following  differential operator
\begin{eqnarray}
\label{operator}
\mathcal{A}u=-\sum_{i,j=1}^{d}\dfrac{\partial}{\partial x_i}\left(D_{ij}(x)\dfrac{\partial u}{\partial x_j}\right)+\sum_{i=1}^dq_i(x)\dfrac{\partial u}{\partial x_i},\quad
\mathbf{D}:=\left(D_{i,j} \right)_{1\leq i,j \leq d},\quad \mathbf{q}:=\left( q_i \right)_{1 \leq i \leq d}.
\end{eqnarray}
where $D_{ij}\in L^{\infty}(\Lambda)$, $q_i\in L^{\infty}(\Lambda)$ and there exists a  constant $c_1>0$ such that 
\begin{eqnarray*}
\sum_{i,j=1}^dD_{ij}(x)\xi_i\xi_j\geq c_1|\xi|^2, \quad \xi\in \mathbb{R}^d,\quad x\in\overline{\Lambda}.
\end{eqnarray*}
The functions $F : H\longrightarrow H$ and $B : H\longrightarrow HS\left(Q^{\frac{1}{2}}(H), H\right)$ are defined respectively by 
\begin{eqnarray}
\label{nemystskii}
\left(F(v)\right)(x)=f\left(x,v(x)\right), \quad  \left(B(v)u\right)(x)=b\left(x,v(x)\right).u(x),\quad x\in \Lambda,\; v\in H,\; u\in Q^{\frac{1}{2}}(H).
\end{eqnarray}
 For an appropriate family of eigenfunctions $(e_i)_{i\in\mathbb{N}}$ such that $\sup\limits_{i\in\mathbb{N}^d}\left[\sup\limits_{x\in \Lambda}\Vert e_i(x)\Vert\right]<\infty$, 
 it is well known  that the Nemystskii operator $F$ related to $f$ and the multiplication operator $B$ associated 
 to the function $b$ defined in \eqref{nemystskii} satisfy Assumptions \ref{assumption3},  \ref{assumption4} and  \ref{assumption5}, see e.g. \cite[Section 4]{Arnulf1}.
As in \cite{Antonio1,Suzuki} we introduce two spaces $\mathbb{H}$ and $V$, such that $\mathbb{H}\subset V$; the two spaces depend on the boundary 
conditions and the domain of the operator $A$. For  Dirichlet (or first-type) boundary conditions we take 
\begin{eqnarray*}
V=\mathbb{H}=H^1_0(\Lambda)=\{v\in H^1(\Lambda) : v=0\quad \text{on}\quad \partial \Lambda\}.
\end{eqnarray*}
For Robin (third-type) boundary condition and  Neumann (second-type) boundary condition, which is a special case of Robin boundary condition, we take $V=H^1(\Lambda)$
\begin{eqnarray*}
\mathbb{H}=\{v\in H^2(\Lambda) : \partial v/\partial \mathtt{v}_{ \mathcal{A}}+\alpha_0v=0,\quad \text{on}\quad \partial \Lambda\}, \quad \alpha_0\in\mathbb{R},
\end{eqnarray*}
where $\partial v/\partial \mathtt{v}_{ \mathcal{A}}$ is the normal derivative of $v$ and $\mathtt{v}_{ A}$ is the exterior pointing normal at $n=(n_i)$ to the boundary of $\mathcal{A}$, given by
\begin{eqnarray*}
\partial v/\partial\mathtt{v}_{\mathcal{A}}=\sum_{i,j=1}^dn_i(x)D_{ij}(x)\dfrac{\partial v}{\partial x_j},\qquad x \in \partial \Lambda.
\end{eqnarray*}
Using  Green's formula and the boundary conditions, the  corresponding bilinear form associated to $\mathcal{A}$ and $A$  is given by
\begin{eqnarray*}
a(u,v)=\int_{\Lambda}\left(\sum_{i,j=1}^dD_{ij}\dfrac{\partial u}{\partial x_i}\dfrac{\partial v}{\partial x_j}+\sum_{i=1}^dq_i\dfrac{\partial u}{\partial x_i}v\right)dx, \quad u,v\in V,
\end{eqnarray*}
for Dirichlet and Neumann boundary conditions, and  
\begin{eqnarray*}
a(u,v)=\int_{\Lambda}\left(\sum_{i,j=1}^dD_{ij}\dfrac{\partial u}{\partial x_i}\dfrac{\partial v}{\partial x_j}+\sum_{i=1}^dq_i\dfrac{\partial u}{\partial x_i}v\right)dx+\int_{\partial\Lambda}\alpha_0uvdx, \quad u,v\in V,
\end{eqnarray*}
for Robin boundary conditions. Using  G\aa rding's inequality (see e.g. \cite{ATthesis}), it holds that there exist two constants $c_0$ and $\lambda_0>0$ such that
\begin{eqnarray}
\label{ellip1}
a(v,v)\geq \lambda_0\Vert v \Vert^2_{H^1(\Lambda)}-c_0\Vert v\Vert^2, \quad v\in V.
\end{eqnarray}
By adding and substracting $c_0Xdt$ in both sides of \eqref{model}, we have a new linear operator
 still denoted by $A$, and the corresponding  bilinear form is also still denoted by $a$. Therefore, the following coercivity property holds
\begin{eqnarray}
\label{ellip2}
a(v,v)\geq \lambda_0\Vert v\Vert^2_1,\quad v\in V.
\end{eqnarray}
Note that the expression of the nonlinear term $F$ has changed as we included the term $c_0X$ in the new nonlinear term that we still denote by  $F$. The coercivity property (\ref{ellip2}) implies that $-A$ is sectorial on $L^{2}(\Lambda)$, i.e.  there exist $C_{1},\, \theta \in (\frac{1}{2}\pi,\pi)$ such that
\begin{eqnarray*}
 \Vert (\lambda I +A )^{-1} \Vert_{L(L^{2}(\Lambda))} \leq \dfrac{C_{1}}{\vert \lambda \vert },\;\quad \quad
\lambda \in S_{\theta},
\end{eqnarray*}
where $S_{\theta}:=\left\lbrace  \lambda \in \mathbb{C} :  \lambda=\rho e^{i \phi},\; \rho>0,\;0\leq \vert \phi\vert \leq \theta \right\rbrace $ (see e.g. \cite{Henry}).
 The coercivity property \eqref{ellip2} implies that  $-A$ is the infinitesimal generator of a contraction semigroup $S(t)=e^{-t A}$  on $L^{2}(\Lambda)$. 
The coercivity  property \eqref{ellip2} also implies that $A$ is  positive  and its fractional powers are well defined  for any $\alpha>0,$ by
\begin{equation}
\label{fractional}
 \left\{\begin{array}{rcl}
         A^{-\alpha} & =& \frac{1}{\Gamma(\alpha)}\displaystyle\int_0^\infty  t^{\alpha-1}{\rm e}^{-tA}dt,\\
         A^{\alpha} & = & (A^{-\alpha})^{-1},
        \end{array}\right.
\end{equation}
where $\Gamma(\alpha)$ is the Gamma function (see \cite{Henry}).
Let us now turn our attention to the space discretization of our problem \eqref{model}.  We start by  splitting  the domain $\Lambda$ in finite triangles.
Let $\mathcal{T}_h$ be the triangulation with maximal length $h$ satisfying the usual regularity assumptions, and  $V_h \subset V$ be the space of continuous functions that are 
piecewise linear over the triangulation $\mathcal{T}_h$. 
We consider the projection $P_h$ from $H=L^2(\Lambda)$ to $V_h$ defined for every $u\in H$ by 
\begin{eqnarray}
\label{projection}
\langle P_hu,\chi\rangle_H=\langle u,\chi\rangle_H, \quad  \chi\in V_h.
\end{eqnarray}
The discrete operator $A_h : V_h\longrightarrow V_h$ is defined by 
\begin{eqnarray}
\label{discreteoperator}
\langle A_h\phi,\chi\rangle_H=\langle A^{\frac{1}{2}}\phi,(A^*)^{\frac{1}{2}}\chi\rangle_H=a(\phi,\chi),\quad  \phi,\chi\in V_h,
\end{eqnarray}
Like $-A$, $-A_h$ is also a generator of a bounded analytic semigroup $S_h(t)$ on $V_h$, given by (see e.g. \cite[Chapter II, (7.14)]{Suzuki} or \cite{Larsson2})
\begin{eqnarray*}
S_h(t)=e^{-tA_h}=\frac{1}{2\pi i}\int_{\mathcal{C}}e^{-t\lambda}(\lambda\mathbf{I}-A_h)^{-1}d\lambda,\quad t>0,
\end{eqnarray*} 
where $\mathcal{C}$ is a path that surrounds the spectrum of $-A_h$. 
Let $K$ be a constant satisfying 
\begin{eqnarray}
\label{jac2}
\Vert S_h(t)\Vert_{L(H)} \leq K,\quad t\geq 0.
\end{eqnarray} 
As any semigroup and its generator, $-A_h$ and $S_h(t)$ satisfy the smoothing properties of   \propref{theorem1}  with a uniform constant $C$ (i.e. independent of $h$). 
Following \cite{Larsson2,Antonio1,Suzuki}, we characterize the domain of the operator $A^{\frac{\gamma}{2}},\, 1 \leq \gamma\leq 2$ as follows:
\begin{eqnarray*}
\mathcal{D}(A^{\frac{\gamma}{2}})=\mathbb{H}\cap H^{\gamma}(\Lambda) \; \text{ for Dirichlet boundary conditions},\\
\mathcal{D}(A)=\mathbb{H}, \quad \mathcal{D}(A^{\frac{1}{2}})=H^1(\Lambda) \; \text{for Robin boundary conditions}.
\end{eqnarray*}  
The semi-discrete  version of  \eqref{model} consists to find $X^h(t)\in V_h$,  $t\in(0,T]$ such that 
\begin{eqnarray}
\label{semi1}
dX^h(t)+[A_hX^h(t)+P_hF(X^h(t))]dt=P_hB(X^h(t))dW(t),\quad X^h(0)=P_hX_0, \quad t\in(0,T].
\end{eqnarray}
The proof of the following lemma can be found in \cite[Lemma 4 \& Lemma 5]{Antjd1}. Its provides the space and time regularities of the mild solution $X^h(t)$ of \eqref{semi1}. 
\begin{lemma}
\label{regularity}
\begin{itemize}
\item[(i)] Let Assumptions  \ref{assumption1} (with $\beta\in[0,1)$), \ref{assumption2}, \ref{assumption3} and \ref{assumption4} be fulfilled. Then the mild solution $X^h(t)$ of \eqref{semi1} satisfies  the following regularity estimates 
\begin{eqnarray}
\label{regular1}
\left\Vert A_h^{\frac{\beta}{2}}X^h(t)\right\Vert_{L^p(\Omega,H)}&\leq& C\left(1+\left\Vert A^{\frac{\beta}{2}}X_0\right\Vert_{L^p(\Omega,H)}\right),\quad t\in[0,T],\\
\label{regular2}
\left\Vert X^h(t_2)-X^h(t_1)\right\Vert_{L^p(\Omega,H)}&\leq& C\vert t_2-t_1\vert^{\frac{\beta}{2}}\left(1+\left\Vert A^{\frac{\beta}{2}}X_0\right\Vert_{L^p(\Omega,H)}\right), \;t_1,t_2\in [0,T].
\end{eqnarray}
Moreover, if $\beta\in[1,2)$ and if \assref{assumption5} is fulfilled, then 
\begin{eqnarray*}
\left\Vert X^h(t_2)-X^h(t_1)\right\Vert_{L^p(\Omega,H)}&\leq& C\vert t_2-t_1\vert^{\frac{1}{2}}\left(1+\left\Vert A^{\frac{\beta}{2}}X_0\right\Vert_{L^p(\Omega,H)}\right), \; t_1, t_2\in[0,T].
\end{eqnarray*} 
\item[(ii)] Let Assumptions \ref{assumption1}, \ref{assumption2}, \ref{assumption3} and \ref{assumption6b} be fulfilled with $\beta\in[0,2)$. Then the mild solution $X^h(t)$ of \eqref{semi1} in the case of additive noise satisfies the following regularity estimates
\begin{eqnarray}
\label{regular1}
\left\Vert A_h^{\frac{\beta}{2}}X^h(t)\right\Vert_{L^p(\Omega,H)}\leq C\left(1+\left\Vert A^{\frac{\beta}{2}}X_0\right\Vert_{L^p(\Omega,H)}\right),\quad t\in[0,T],\\
\label{regular2}
\left\Vert X^h(t_2)-X^h(t_1)\right\Vert_{L^p(\Omega,H)}\leq C\vert t_2-t_1\vert^{\frac{\min(\beta,1)}{2}}\left(1+\left\Vert A^{\frac{\beta}{2}}X_0\right\Vert_{L^p(\Omega,H)}\right),\; t_1,t_2\in [0,T].
\end{eqnarray} 
\end{itemize}
Here $C=C(C_F, C_B, C_Q, \Vert F(0)\Vert, T, \beta)$ is a positive constant, independent of $h$, $t$, $t_1$ and $t_2$.
\end{lemma}
\begin{corollary}
\label{corollary1}
As a consequence of \lemref{regularity},  it holds that
\begin{eqnarray*}
\Vert X^h(t)\Vert_{L^p(\Omega, H)}\leq C,\quad \left\Vert F\left(X^h(t)\right)\right\Vert_{L^p(\Omega,H)}\leq C,\quad \left\Vert B\left(X^h(t)\right)\right\Vert_{L^p(\Omega,H)}\leq C,\quad t\in[0,T].
\end{eqnarray*}
\end{corollary}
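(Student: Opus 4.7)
The statement is a direct corollary of \lemref{regularity}, so the plan is to extract the uniform $L^2$ bound on $X^h(t)$ from the weighted regularity estimate and then exploit the linear growth of the coefficients $F$ and $B$ already established in \rmref{remark1} and \assref{assumption4}.

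The plan is to start with the bound
\begin{eqnarray*}
\left\Vert A_h^{\beta/2}X^h(t)\right\Vert_{L^2(\Omega,H)}\leq C\left(1+\left\Vert A^{\beta/2}X_0\right\Vert_{L^2(\Omega,H)}\right),
\end{eqnarray*}
which holds by \lemref{regularity} with $p=2$ and is finite by \assref{assumption2}. Since the coercivity property \eqref{ellip2} (after the shift by $c_0$) makes $A$ positive definite, the spectrum of $A_h$ is bounded away from zero uniformly in $h$, so $A_h^{-\beta/2}$ is a bounded operator on $V_h$ with $\Vert A_h^{-\beta/2}\Vert_{L(H)}\leq C$ uniformly in $h$. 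Writing $X^h(t)=A_h^{-\beta/2}A_h^{\beta/2}X^h(t)$ and taking $L^2(\Omega,H)$ norms yields the first estimate $\Vert X^h(t)\Vert_{L^2(\Omega,H)}\leq C$, uniformly in $t\in[0,T]$ and $h$.

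For the second estimate, I would combine the linear growth consequence of \rmref{remark1}, namely $\Vert F(u)\Vert\leq C(1+\Vert u\Vert)$, with the $L^2$ bound just established: taking expectations gives $\Vert F(X^h(t))\Vert_{L^2(\Omega,H)}\leq C(1+\Vert X^h(t)\Vert_{L^2(\Omega,H)})\leq C$. The third estimate, which is really in $L^2(\Omega, L^0_2)$, follows in the same way from the linear growth bound $\Vert B(u)\Vert_{L^0_2}\leq L(1+\Vert u\Vert)$ given at the end of \assref{assumption4}.

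There is no essential obstacle here: the only mildly delicate point is justifying the uniform boundedness of $A_h^{-\beta/2}$ in $h$, which is standard and can be avoided altogether if one simply notes that for $\beta\geq 0$ the embedding $\mathcal{D}(A_h^{\beta/2})\hookrightarrow H$ is continuous with a constant independent of $h$, by the shifted coercivity. Everything else is a direct chain of inequalities from the hypotheses of the lemma to the stated pointwise-in-$t$ uniform bounds.
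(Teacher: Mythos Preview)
Your proposal is correct and matches the paper's (implicit) approach: the corollary is stated without proof in the paper, precisely because it follows immediately from \lemref{regularity} together with the linear growth bounds of \rmref{remark1} and \assref{assumption4}, exactly as you outline. Your observation that the third bound should really be read in $L^2(\Omega,L^0_2)$ is also correct.
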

\subsection{Standard linear implicit Euler method and stability properties}
\label{faiblesse}
Let us recall that the linear implicit Euler scheme applied to the semi-discrete problem \eqref{semi} is given by
\begin{eqnarray}
\label{euler}
Z^h_{m+1}&=&S_{h,\Delta t}Z^h_m+\Delta tS_{h,\Delta t}P_hF(Z^h_m)+S_{h,\Delta t}P_hB(Z^h_m),\\
S_{h,\Delta t}&:=&\left(\mathbf{I}+\Delta tA_h\right)^{-1}, \quad Z^h_0=P_hX_0.
\end{eqnarray}
 If the linear operator $A$ tends to the null\footnote{Think for instance of the Laplace operator $A=\alpha\Delta$, with $\alpha\longrightarrow 0$. Here the null operator is understood in the sense of $Au=0$ for all $u\in\mathcal{D}(A)$.} operator,
 its corresponding discrete version $A_h$ tends to the null operator and $S_{h,\Delta t}$ tends to the identity operator $\mathbf{I}$. In this case, 
 the numerical scheme \eqref{euler} and the standard exponential integrator  \cite{Antonio1} behave like the unstable Euler-Maruyama scheme. 
 See also \cite[Section 2.3]{Antjd1} for more details. 
 For a simple illustration of the  stability properties of such problems, let us consider the following deterministic linear differential equation
 \begin{eqnarray}
 \label{stabil1}
 y'=ay+cy, \quad a>0, c<0, \quad \text{such that}\quad c<-a.
 \end{eqnarray}
The linear implicit Euler method applied to \eqref{stabil1} by considering $F(y)=cy$ as the nonlinear part is given by
\begin{eqnarray}
\label{stabil2}
y_{n+1}=\frac{1+c\Delta t}{1-a\Delta t}y_n,\quad n\geq 0.
\end{eqnarray}
The numerical scheme \eqref{stabil2} is stable  \cite{AntBerre,Wanner} if and only if $\Delta t <\frac{2}{a-c}$. 
Note that when $a$ is small enough and $\vert c\vert$ large enough, the numerical scheme \eqref{stabil2} behaves like 
the explicit Euler method and the stability region  becomes very small.   Rosenbrock-type methods were proved to be efficient in such situations
and were studied in \cite{Hairer,Hairer1,Wanner} for ordinary differential equations. Applying the Rosenbrock-Euler  method to the linear problem \eqref{stabil1} yields
\begin{eqnarray}
\label{stabil3}
y_{n+1}=\frac{1}{1-(a+c)\Delta t}y_n,\quad n\geq 0.
\end{eqnarray} 
Note  that \eqref{stabil3} coincides with the full implicit method  with $F(y)=cy$. Rosenbrock-Euler method \eqref{stabil3} is unconditionally stable (A-stable). 
This demonstrates the  strong stability  property of  Rosenbrock-type methods for stiff problems.  Authors of \cite{Ostermann1,Ostermann3} 
extended  Rosenbrock-type methods to parabolic partial differential equations and the methods  were  proved to be efficient for 
solving transport equations in porous media \cite{AntBerre}. To the best of our  knowledge, the case of stiff stochastic partial differential equations 
is not yet studied in the scientific literature and will  be the aim of this  paper.

\subsection{ Novel fully discrete scheme and main results}
\label{strongresult}
Let us  build a more stable scheme, robust when the linear operator $A$ tends to null operator.
For the time discretization, we consider the one-step method which  provides the numerical approximated solution $X^h_m$ of $X^h(t_m)$ at discrete time $t_m=m\Delta t$, $m=0, \cdots, M$.
The method is based on the continuous linearization of \eqref{semi1}.
More precisely, we linearize  \eqref{semi1}  at each time step as follows
\begin{eqnarray}
\label{semi}
dX^h(t)+[A_hX^h(t)+J_m^hX^h(t)]dt=G^h_m\left(X^h(t)\right)dt
+P_hB\left(X^h(t)\right)dW(t), \; t_m\leq t\leq t_{m+1},
\end{eqnarray}
where $J_m^h$ is the Fr\'{e}chet derivative of $P_hF$ at $X^h_m$ and $G^h_m$ is  the remainder at $X^h_m$.
Both $J^h_m$ and $G^h_m$ are random variables and are  defined for all $\omega\in \Omega$ by
\begin{eqnarray}
\label{remainder1}
J^h_m(\omega) &:=&(P_hF)'(X^h_m(\omega))=P_hF'(X^h_m(\omega)),\\
\label{remainder2}
 G^h_m(\omega)(X^h(t)) &:=&-P_hF(X^h(t))+J_m^h(\omega)X^h(t).
\end{eqnarray}
Applying the linear implict Euler method  to \eqref{semi}  yields the following fully discrete scheme, 
called stochastic Rosenbrock-type scheme  (SROS)
\begin{eqnarray}
\label{implicit1}
\left\{\begin{array}{ll}
X^h_0=P_hX_0, \quad \\
X^h_{m+1}=S^m_{h,\Delta t}X^h_m+\Delta tS^m_{h,\Delta t}G^h_m(X^h_m)+S^m_{h,\Delta t}P_hB(X^h_m)\Delta W_m, 
\end{array}
\right.
\end{eqnarray}
where $\Delta W_m$ and $S_{h,\Delta t}$ are defined respectively by 
\begin{eqnarray}
\label{operator1}
\Delta W_m :=W_{t_{m+1}}-W_{t_{m}},\quad  \quad  S^m_{h,\Delta t}(\omega):=\left(\mathbf{I}+\Delta tA_{h,m}(\omega)\right)^{-1},
\end{eqnarray}
and the linear operator $A_{h,m}$ is given by
\begin{eqnarray}
\label{operator2}
A_{h,m}(\omega):=A_h+J^h_m(\omega), \quad \omega\in \Omega.
\end{eqnarray}
In the numerical scheme \eqref{implicit1}, the resolvent operator  (defined in \eqref{operator1}) is random and changes at each time step.
 Having the numerical method  \eqref{implicit1}  in hand, our goal is to analyze its strong convergence toward the exact solution in 
 the  root-mean-square $L^2$ norm for multiplicative and additive noise. 

Throughout this paper we take $t_m=m\Delta t\in[0,T]$, where $\Delta t=\frac{T}{M}$ for $m, M\in\mathbb{N}$, $m\leq M$, $C$ 
is a generic constant that may change from one place to another but is independent of both $\Delta t$ and $h$.
The main results of this paper are formulated in the following theorems. 
\begin{theorem}\textbf{[Multiplicative noise]}
\label{mainresult1}
Let $X(t_m)$ and  $X^h_m$ be respectively the mild solution given by \eqref{mild1} and  the numerical approximation given by \eqref{implicit1} at $t_m=m\Delta t$. 
Let Assumptions \ref{assumption1}, \ref{assumption2} (with $p=2$),  \ref{assumption3} and \ref{assumption4} be fulfilled.
\begin{itemize}
\item[(i)] If  $0<\beta<1$, then the following error  estimate holds
\begin{eqnarray*}
\Vert X(t_m)-X^h_m\Vert_{L^2(\Omega,H)}\leq C\left(h^{\beta}+\Delta t^{\frac{\beta}{2}}\right).
\end{eqnarray*}
\item[(ii)] If $1\leq\beta\leq 2$ and if \assref{assumption5} is fulfilled, then the following error estimate holds
\begin{eqnarray*}
\Vert X(t_m)-X^h_m\Vert_{L^2(\Omega,H)}\leq C\left(h^{\beta}+\Delta t^{\frac{1}{2}}\right),
\end{eqnarray*}
\end{itemize}
where $C=C(C_F, C_B, T, \Vert F(0)\Vert, c, X_0))$ is a positive constant independent of $h$,  $M$ and $\Delta t$.
\end{theorem}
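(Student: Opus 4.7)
The proof starts with the standard error splitting
\[
\|X(t_m) - X^h_m\|_{L^2(\Omega,H)} \le \|X(t_m) - X^h(t_m)\|_{L^2(\Omega,H)} + \|X^h(t_m) - X^h_m\|_{L^2(\Omega,H)}.
\]
Under \assref{assumption1}--\assref{assumption4} (and \assref{assumption5} when $\beta \ge 1$), the first, spatial term is a classical finite-element error bounded by $C h^{\beta}$; see e.g.\ \cite{Antonio1,AntGaby,Antjd1}. The analysis therefore concentrates on the temporal error $e_m := X^h(t_m) - X^h_m$.

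Since the linearised equation \eqref{semi} is merely a rewriting of \eqref{semi1}, on each subinterval $[t_m, t_{m+1}]$ the semi-discrete solution admits the mild representation
\[
X^h(t_{m+1}) = e^{-\Dt A_{h,m}} X^h(t_m) + \int_{t_m}^{t_{m+1}} e^{-(t_{m+1}-s)A_{h,m}} G^h_m(X^h(s))\,ds + \int_{t_m}^{t_{m+1}} e^{-(t_{m+1}-s)A_{h,m}} P_h B(X^h(s))\,dW(s),
\]
to be compared with \eqref{implicit1}, where the random semigroup $e^{-\Dt A_{h,m}(\omega)}$ is replaced by its rational approximant $S^m_{h,\Dt}(\omega) = (\mathbf{I}+\Dt A_{h,m}(\omega))^{-1}$ and both integrals are approximated by a left-endpoint quadrature. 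Iterating both recursions down to $t_0=0$ and subtracting expresses $e_{m+1}$ as a sum of three families: (i) a consistency error of the form $[\prod e^{-\Dt A_{h,k}} - \prod S^k_{h,\Dt}]P_h X_0$; (ii) deterministic remainders built from $G^h_k(X^h(s)) - G^h_k(X^h_k)$; and (iii) stochastic remainders built from $P_hB(X^h(s))-P_hB(X^h_k)$, all three carrying compositions of the random resolvents $S^{m-1}_{h,\Dt}(\omega)\cdots S^{k}_{h,\Dt}(\omega)$.

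The analytical core is the stability of these random products, which I expect to extract from Section~\ref{Preparation}. Since \assref{assumption3} makes each $J^h_k$ uniformly bounded in $L(H)$, the identity $S^k_{h,\Dt} = (\mathbf{I}+\Dt J^h_k (\mathbf{I}+\Dt A_h)^{-1})^{-1}(\mathbf{I}+\Dt A_h)^{-1}$ reduces a single perturbed resolvent to the unperturbed one, and a telescoping/resolvent argument should yield, uniformly in $\omega$, $h$ and $\Dt$, the smoothing bound
\[
\left\| A_h^{\alpha}\, S^{m-1}_{h,\Dt}(\omega)\cdots S^{j}_{h,\Dt}(\omega) \right\|_{L(H)} \le C\,(t_m - t_j)^{-\alpha},\qquad 0\le\alpha\le 1,
\]
together with boundedness at $\alpha=0$. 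With these in hand, the deterministic remainder is handled pathwise via the identity $G^h_k(X^h(s))-G^h_k(X^h_k) = -[P_hF(X^h(s))-P_hF(X^h_k)] + J^h_k(X^h(s)-X^h_k)$ combined with the Lipschitz bound \eqref{reviewinequal1} and the H\"older-in-time regularity \eqref{regular2} from \lemref{regularity}, producing a contribution of order $\Dt^{\min(\beta,1)/2}$. For the stochastic remainder, one distributes the factor $A_h^{(1-\beta)/2}\cdot A_h^{(\beta-1)/2}$ inside the Hilbert--Schmidt norm and applies It\^o's isometry: the first factor is absorbed by the smoothing bound above, while the second is controlled by \assref{assumption4} when $\beta<1$ (since $(\beta-1)/2<0$) and by \assref{assumption5} when $\beta \ge 1$, yielding the same order $\Dt^{\min(\beta,1)/2}$.

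Assembling the three contributions and squaring produces a discrete Gronwall inequality of the form
\[
\|e_{m+1}\|^2_{L^2(\Omega,H)} \le C\bigl(h^{2\beta} + \Dt^{\min(\beta,1)}\bigr) + C\Dt\sum_{k=0}^{m}\|e_k\|^2_{L^2(\Omega,H)},
\]
from which the announced rates $h^{\beta}+\Dt^{\min(\beta,1)/2}$ follow. The principal technical obstacle is that $\{S^k_{h,\Dt}(\omega)\}$ is a family of \emph{random, time-varying} operators whose compositions do not form a semigroup and, crucially, are not adapted at earlier times: the product $S^{m-1}_{h,\Dt}(\omega)\cdots S^{k+1}_{h,\Dt}(\omega)$ sitting in front of the increment $\Delta W_k$ depends on $X^h_{k+1},\dots,X^h_{m-1}$, which obstructs a naive use of It\^o's isometry. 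Re-establishing the classical smoothing, commutation, and orthogonality identities in this non-commutative random setting, while keeping the sharp power of $t_m-t_j$, is the main difficulty.
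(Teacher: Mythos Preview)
Your proposal follows essentially the same route as the paper: iterate the mild representation (rewritten via the linearised operators $A_{h,k}$) and the numerical scheme, subtract, decompose the temporal error into an initial-data consistency term plus drift and stochastic remainders (the paper's $II_0$--$II_4$), bound these with the random-resolvent smoothing estimates of Section~\ref{Preparation} (your anticipated product bound is exactly \lemref{lemma7}, and the comparison of $\prod e^{-\Dt A_{h,j}}$ with $\prod S^j_{h,\Dt}$ is \lemref{lemma10}), and close with discrete Gronwall. The adaptedness obstruction you flag at the end is not resolved in the paper's argument either---It\^o isometry is applied directly to the terms carrying anticipating resolvent products without further comment.
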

\begin{theorem}\textbf{[Additive noise]}
\label{mainresult2}
When dealing with additive noise (i.e. when $B=\mathbf{I}$), let Assumptions \ref{assumption1}, \ref{assumption2} with $p=4$, \ref{assumption3}, \ref{assumption6a} and \ref{assumption6b} be fulfilled. Then the following error estimate holds for the mild solution $X(t)$ of \eqref{model} and the numerical approximation  \eqref{implicit1}
\begin{eqnarray}
\Vert X(t_m)-X^h_m\Vert_{L^2(\Omega,H)}\leq C\left(h^{\beta}+\Delta t^{\frac{\beta}{2}-\epsilon}\right),
\end{eqnarray}
where $C=C(C_F, C_Q, T, \Vert F(0)\Vert,X_0)$ is a positive constant independent of $h$,  $M$ and $\Delta t$.
\end{theorem}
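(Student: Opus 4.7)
The plan is to split
\begin{eqnarray*}
X(t_m)-X^h_m = \bigl(X(t_m)-X^h(t_m)\bigr) + \bigl(X^h(t_m)-X^h_m\bigr),
\end{eqnarray*}
where $X^h$ is the mild solution of the semi-discrete problem \eqref{semi1}. The first term is the pure finite element spatial error, which I would treat by the standard Ritz projection plus semigroup smoothing argument, combined with the regularity of $X^h$ from \lemref{regularity}(ii) and \assref{assumption6a}; this gives $\mathcal{O}(h^\beta)$. The real work is in the temporal error, which requires handling the \emph{random}, step-dependent resolvent $S^m_{h,\Dt}(\omega)$.

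For the temporal error I would iterate the scheme \eqref{implicit1} to obtain a discrete mild-type representation of $X^h_m$ involving products $\prod_{j=k}^{m-1}S^j_{h,\Dt}$, and compare it term by term with the continuous mild representation of $X^h(t_m)$. Using $P_hF(X^h(s))=J^h_k X^h(s)-G^h_k(X^h(s))$ on each subinterval $[t_k,t_{k+1}]$, the error splits into (a) an initial-data contribution measuring the mismatch between $S_h(t_m)$ and the product of random resolvents, (b) a drift contribution involving
\begin{eqnarray*}
F(X^h(s))-F(X^h_k)-F'(X^h_k)\bigl(X^h(s)-X^h_k\bigr),
\end{eqnarray*}
and (c) a stochastic convolution contribution
\begin{eqnarray*}
\sum_{k=0}^{m-1}\int_{t_k}^{t_{k+1}}\Bigl[S_h(t_m-s)-\prod_{j=k}^{m-1}S^j_{h,\Dt}\Bigr]P_h\,dW(s).
\end{eqnarray*}
For (a) and the stability of these random products I would invoke the perturbation and smoothing estimates that the paper develops in its preparatory section, which deliver uniform analogues of \propref{prop1} for $A_{h,m}(\omega)$. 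For (b), Taylor expansion together with \assref{assumption6b} gives $\Vert F(u)-F(v)-F'(v)(u-v)\Vert_{-\eta}\le L\Vert u-v\Vert^2$; combined with the negative-norm smoothing of the resolvent and the time-regularity $\Vert X^h(s)-X^h_k\Vert_{L^4(\Omega,H)}\le C\Dt^{\min(\beta,1)/2}$ from \lemref{regularity}, this upgrades the trivial bound and contributes at most $\mathcal{O}(\Dt^{\beta/2-\epsilon})$ after summation.

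The main obstacle is the stochastic term (c). By It\^o's isometry and \assref{assumption6a}, it suffices to estimate
\begin{eqnarray*}
\int_0^{t_m}\Bigl\Vert\Bigl[S_h(t_m-s)-\prod_{j=k}^{m-1}S^j_{h,\Dt}\Bigr]A^{(1-\beta)/2}\Bigr\Vert^2_{L(H)}ds,
\end{eqnarray*}
where $k=k(s)$ is the index with $s\in[t_k,t_{k+1})$. Splitting this as the deterministic rational-approximation error $S_h(t_m-s)-(I+\Dt A_h)^{-(m-k)}$, plus the random perturbation $(I+\Dt A_h)^{-(m-k)}-\prod_{j=k}^{m-1}S^j_{h,\Dt}$, I would bound the first by classical estimates for implicit Euler of order $\beta/2-\epsilon$, and the second via a telescoping identity that brings out $\Dt\,J^h_j$, whose $L(H)$-norm is uniformly controlled by \eqref{jac1}. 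The loss $\epsilon$ is unavoidable and comes from integrating $s^{\beta-1}$ near the singularity at $s=t_m$ with the extra logarithmic factor introduced by the non-self-adjoint smoothing. Once (a)--(c) are in place, substituting the $L^4$-regularity of $X^h$ (which is why $p=4$ is needed in \assref{assumption2}) and applying the discrete Gronwall lemma to the resulting recursive inequality yields $\Vert X^h(t_m)-X^h_m\Vert_{L^2(\Omega,H)}\le C\Dt^{\beta/2-\epsilon}$, and the theorem follows by combining with the spatial estimate.
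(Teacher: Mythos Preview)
Your overall architecture is the same as the paper's: split into spatial and temporal errors, iterate both the mild and the discrete solutions, decompose into initial-data, drift and stochastic contributions, invoke \assref{assumption6b} for the quadratic Taylor remainder (this is exactly where $p=4$ enters, via $\Vert X^h(s)-X^h(t_k)\Vert_{L^4}^2$), and close with discrete Gronwall.

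There is, however, a genuine structural difference in the intermediate representation. The paper does \emph{not} compare $\prod_{j=k}^{m-1}S^j_{h,\Dt}$ against the unperturbed semigroup $S_h(t_m-s)$. Instead it first rewrites the mild solution of the semi-discrete problem on each subinterval $[t_{m-1},t_m]$ using the \emph{perturbed} generator $A_{h,m-1}=A_h+J^h_{m-1}$ (equation \eqref{mild2}), and then iterates to get a representation of $X^h(t_m)$ in terms of products $\prod_j e^{\Dt A_{h,j}}$ of perturbed exponentials (equation \eqref{num2}). The temporal comparison is therefore between products of perturbed exponentials and products of perturbed resolvents, which is exactly what \lemref{lemma10} is built for. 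Your route via the deterministic resolvent $(I+\Dt A_h)^{-(m-k)}$ followed by a telescoping perturbation is a legitimate alternative, but you would have to reprove the analogues of \lemref{lemma10} in that setting rather than invoke the paper's preparatory lemmas directly.

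The more serious issue is your drift term (b). Reducing it to the second-order Taylor remainder $F(X^h(s))-F(X^h_k)-F'(X^h_k)(X^h(s)-X^h_k)$ alone is not enough when $\beta>1$: the first-order term $(G^h_{m-i})'(X^h(t_{m-i}))\bigl(X^h(s)-X^h(t_{m-i})\bigr)$ does not vanish, and bounding it naively by the time regularity yields only $\mathcal{O}(\Dt^{1/2})$ after summation, not $\mathcal{O}(\Dt^{\beta/2})$. The paper upgrades this by substituting the local mild representation for $X^h(s)-X^h(t_{m-i})$ and observing that the dominant contribution is a sum of stochastic integrals over disjoint intervals whose cross-products vanish under expectation (term $III_{32}^{(3)}$ in \eqref{cle0}). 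This martingale orthogonality is precisely what lifts the drift error from order $1/2$ to order $\beta/2$ when $\beta>1$, and it is missing from your outline.
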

\section{Proof of the main results}
\label{convergenceproof}
The proofs of the main results require some preparatory results.

\subsection{Preparatory results}
\label{Preparation}
For non commutative operators $H_j$ in a Banach space, we introduce the following notation, which will be used in the rest of the paper.
\begin{eqnarray*}
\prod_{j=l}^kH_j:=\left\{\begin{array}{ll}
H_kH_{k-1}\cdots H_l,\quad \text{if} \quad k\geq l,\\
\mathbf{I},\quad \hspace{2.4cm} \text{if} \quad k<l.
\end{array}
\right.
\end{eqnarray*}

\begin{lemma}\cite[Lemma 10]{Antjd1}
\label{lemma5}
Let  \assref{assumption2} be fulfilled. Then for all $\omega\in \Omega$ the following estimate holds
\begin{eqnarray}
\label{comp1}
\left\Vert\left(\prod_{j=l}^me^{\Delta tA_{h,j}(\omega)}\right)A_h^{\gamma}\right\Vert_{L(H)}\leq Ct_{m+1-l}^{-\gamma}, \quad 0\leq l\leq m,\quad 0\leq \gamma<1.
\end{eqnarray}
\end{lemma}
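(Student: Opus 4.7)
The key observation is that by \assref{assumption3}, the operator $J^h_j(\omega)=P_hF'(X^h_j(\omega))$ is uniformly bounded with $\Vert J^h_j(\omega)\Vert_{L(H)}\leq b$, so $-A_{h,j}(\omega)=-A_h-J^h_j(\omega)$ is a bounded perturbation of $-A_h$ and therefore generates an analytic semigroup, which I interpret as the object appearing in \eqref{comp1} (reading the exponent with the sign consistent with the convention $S(t)=e^{-tA}$ used throughout the paper). The plan is to establish the estimate in three steps.

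\textbf{Step 1 (single-factor smoothing).} First I would prove that for each fixed $j$ and $\omega$,
\begin{equation*}
\Vert e^{-\Delta t A_{h,j}(\omega)}A_h^{\gamma}\Vert_{L(H)}\leq C\Delta t^{-\gamma},\qquad 0\leq\gamma<1.
\end{equation*}
Applying Duhamel's formula
\begin{equation*}
e^{-\Delta t A_{h,j}}=e^{-\Delta t A_h}-\int_0^{\Delta t}e^{-(\Delta t-s)A_h}J^h_j\,e^{-sA_{h,j}}\,ds,
\end{equation*}
post-multiplying by $A_h^{\gamma}$, invoking the smoothing bound of \propref{prop1}, the estimate $\Vert J^h_j\Vert_{L(H)}\leq b$, the uniform bound \eqref{jac2}, and applying Gronwall's lemma (the estimand is integrable near $s=0$ because $\gamma<1$) yields the claim.

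\textbf{Step 2 (uniform boundedness of the product).} Setting $T_m^l(\omega):=\prod_{j=l}^m e^{-\Delta t A_{h,j}(\omega)}$, I would show $\Vert T_m^l(\omega)\Vert_{L(H)}\leq C$ with $C$ independent of $l,m,\Delta t,h,\omega$. The standard bounded perturbation identity gives $\Vert e^{-\Delta t A_{h,j}}\Vert_{L(H)}\leq\Vert e^{-\Delta tA_h}\Vert_{L(H)}\,e^{b\Delta t}$; combining this with the contraction property of $e^{-tA_h}$ (obtained from the coercivity \eqref{ellip2}) makes the $N$-fold product dominated by $e^{bN\Delta t}\leq e^{bT}$, giving the desired uniform bound.

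\textbf{Step 3 (telescoping to the sharp rate).} The heart of the argument is the telescoping identity (verifiable by expanding the sum)
\begin{equation*}
T_m^l=e^{-t_{m+1-l}A_h}+\sum_{k=l}^m T_m^{k+1}\bigl(e^{-\Delta t A_{h,k}}-e^{-\Delta t A_h}\bigr)e^{-(k-l)\Delta tA_h},
\end{equation*}
with the convention $T_m^{m+1}=\mathbf{I}$. Post-multiplying by $A_h^{\gamma}$ gives a leading term $e^{-t_{m+1-l}A_h}A_h^{\gamma}$ bounded by $Ct_{m+1-l}^{-\gamma}$ from \propref{prop1}. For the sum I would use Step 2 and Duhamel to get $\Vert e^{-\Delta t A_{h,k}}-e^{-\Delta t A_h}\Vert_{L(H)}\leq C\Delta t$; for the indices $k>l$, \propref{prop1} gives $\Vert e^{-(k-l)\Delta tA_h}A_h^{\gamma}\Vert_{L(H)}\leq C((k-l)\Delta t)^{-\gamma}$, producing summand $C\Delta t^{1-\gamma}(k-l)^{-\gamma}$. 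The endpoint $k=l$ is treated by grouping $(e^{-\Delta t A_{h,l}}-e^{-\Delta t A_h})A_h^{\gamma}$ into a single quantity and using Step 1 inside the Duhamel integrand to get a bound $C\Delta t^{1-\gamma}$. Summing and using $\sum_{j=1}^{N}j^{-\gamma}\leq CN^{1-\gamma}$ yields a total bound of order $\Delta t^{1-\gamma}(m+1-l)^{1-\gamma}=t_{m+1-l}^{1-\gamma}\leq T\,t_{m+1-l}^{-\gamma}$, completing the proof.

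\textbf{Expected main obstacle.} The delicate point is Step~3: the singular endpoint $k=l$ leaves a bare $A_h^{\gamma}$ which must be absorbed into the Duhamel correction rather than estimated in isolation. An equally important subtlety is the uniform-in-$(l,m,\omega)$ bound of Step~2, which depends on the contractivity rather than mere boundedness of $e^{-tA_h}$; this is precisely the kind of stability estimate for compositions of random perturbed operators that \secref{Preparation} is devoted to.
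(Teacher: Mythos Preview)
Your three-step argument is correct and self-contained. The paper does not actually prove this lemma in the text: its entire proof reads ``See \cite[Lemma 10]{Antjd1}'', so there is no in-paper argument to compare yours against. What you have supplied is a standard bounded-perturbation argument (Duhamel for the single-factor smoothing, contraction-plus-perturbation for the uniform product bound, and a telescoping identity against the unperturbed product $e^{-t_{m+1-l}A_h}$ for the sharp rate), and each step checks out as written. Your handling of the singular endpoint $k=l$ is exactly right: feeding $A_h^{\gamma}$ into the Duhamel correction and invoking Step~1 inside the integrand gives the needed $\Delta t^{1-\gamma}$ bound. The only point worth flagging is that Step~2 genuinely requires $K=1$ in \eqref{jac2} (otherwise $K^{m-l+1}$ blows up as $\Delta t\to 0$); you correctly derive this from the coercivity \eqref{ellip2}, which makes $-A_h$ dissipative and hence the generator of a contraction semigroup, even though the paper itself only asserts boundedness of $S_h(t)$ at that point.
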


\begin{lemma} \cite[Lemma 5]{Antjd1}
\label{perturbedsemi} 
 For all $m\in\mathbb{N}$ and all $\omega\in\Omega$, the random linear operator
$A_h+J^h_m(\omega)$ is the  generator of an analytic semigroup $S^h_m(\omega)(t)=:e^{\left(A_h+J^h_m(\omega)\right)t}$, called  random (or stochastic) perturbed semigroup, which is  uniformly bounded on $[0,T]$, i.e. there exists a positive constant $C_1=C_1(b,T)$ independent of $h$, $m$, $\Delta t$ and  the sample $\omega$ such that
\begin{eqnarray*}
\left\Vert e^{\left(A_h+J^h_m(\omega)\right) t}\right\Vert_{L(H)}&\leq& Ke^{Kbt},\quad t\geq 0\nonumber\\
&\leq& C_1, \quad \quad \;0\leq t\leq T.
\end{eqnarray*}
\end{lemma}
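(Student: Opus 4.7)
The plan is to treat $A_h + J^h_m(\omega)$ as a bounded perturbation of $A_h$ and then invoke the classical semigroup perturbation theory together with Gronwall's lemma. First I would verify that $J^h_m(\omega)$ is a uniformly bounded operator on $H$: since $\|P_h\|_{L(H)}\leq 1$ and $\|F'(u)\|_{L(H)}\leq b$ for all $u\in H$ by \assref{assumption3},
\begin{eqnarray*}
\|J^h_m(\omega)\|_{L(H)}=\|P_hF'(X^h_m(\omega))\|_{L(H)}\leq b,
\end{eqnarray*}
uniformly in $h$, $m\in\N$ and $\omega\in\Omega$.

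Next, since $-A_h$ generates the analytic semigroup $S_h(t)=e^{-tA_h}$ on $H$ (cf.\ \propref{prop1} and the discussion preceding \eqref{jac2}) and $J^h_m(\omega)\in L(H)$ is a bounded perturbation, the standard bounded-perturbation theorem for generators of analytic semigroups (see for example Pazy, \emph{Semigroups of Linear Operators and Applications}, Chapter 3) directly yields that $-\bigl(A_h+J^h_m(\omega)\bigr)$ generates an analytic semigroup, which in the paper's notation is $S^h_m(\omega)(t)=e^{(A_h+J^h_m(\omega))t}$.

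To obtain the quantitative bound $Ke^{Kbt}$, I would use the variation-of-constants (Duhamel) representation
\begin{eqnarray*}
S^h_m(\omega)(t)=S_h(t)+\int_0^t S_h(t-s)\,J^h_m(\omega)\,S^h_m(\omega)(s)\,ds,
\end{eqnarray*}
take norms in $L(H)$, and apply \eqref{jac2} together with the uniform bound $\|J^h_m(\omega)\|_{L(H)}\leq b$ to get
\begin{eqnarray*}
\|S^h_m(\omega)(t)\|_{L(H)}\leq K+Kb\int_0^t \|S^h_m(\omega)(s)\|_{L(H)}\,ds.
\end{eqnarray*}
Gronwall's lemma then produces $\|S^h_m(\omega)(t)\|_{L(H)}\leq Ke^{Kbt}$, and restricting $t\in[0,T]$ gives the uniform bound with $C_1:=Ke^{KbT}$, which is manifestly independent of $h$, $m$, $\Delta t$ and $\omega$.

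The steps are all fairly routine; the only subtlety that needs care is ensuring that the constants in the bounded-perturbation theorem and in the Duhamel estimate do not depend on $h$ or $\omega$. This is guaranteed because \eqref{jac2} already gives an $h$-uniform constant $K$ for $S_h$, and \assref{assumption3} supplies an $\omega$-uniform constant $b$ for $F'$; the Gronwall step then transports this uniformity to $S^h_m(\omega)$. Thus the argument is essentially standard, and I do not expect a genuine obstacle beyond this bookkeeping of constants.
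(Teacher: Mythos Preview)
Your proposal is correct and follows the standard bounded-perturbation argument (uniform bound on $J^h_m(\omega)$ from \assref{assumption3}, analyticity via the bounded-perturbation theorem, and the Duhamel/Gronwall estimate to extract the explicit constant $Ke^{Kbt}$), which is precisely the approach of the cited reference \cite[Lemma 5]{Antjd1} that the paper defers to. The only cosmetic point is a sign in the Duhamel identity (the perturbation of $-A_h$ by $-J^h_m(\omega)$ introduces a minus sign in the integral), but this is irrelevant once norms are taken, so your Gronwall bound and the resulting $C_1=Ke^{KbT}$ are unaffected.
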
  
The following lemma is an analogous of \cite[(3.31)]{Lubich}, but here our semigroup is not constant. In fact, it is random and further  its changes at each time step. 
\begin{lemma}
\label{lemma6}
Let Assumptions \ref{assumption1} and \ref{assumption3} be fulfilled. 
\begin{itemize}
\item[(i)] For all $\alpha\in [0,1]$, $n>1$, $j\geq 0$ and all $\omega\in \Omega$, it holds that
\begin{eqnarray}
\label{essen1}
\left\Vert A_h^{\alpha}\left(\mathbf{I}+tA_{h,j}(\omega)\right)^{-n}\right\Vert_{L(H)}\leq C((n-1)t)^{-\alpha}\leq C(nt)^{-\alpha},\quad t>0.
\end{eqnarray}
\item[(ii)] For all  $\alpha\in[0,1)$, $j\geq 0$ and $\omega\in\Omega$, it holds that
\begin{eqnarray}
\label{essen1a}
\left\Vert A_h^{\alpha}\left(\mathbf{I}+tA_{h,j}(\omega)\right)^{-1}\right\Vert_{L(H)}\leq Ct^{-\alpha},\quad t>0.
\end{eqnarray}
\item[(iii)] For all $n,j\in\mathbb{N}$, it holds that
\begin{eqnarray}
\label{essen1b}
\left\Vert \left(\mathbf{I}+tA_{h,j}(\omega)\right)^{-n}\right\Vert_{L(H)}\leq C,\quad t>0,
\end{eqnarray}
\end{itemize}
where $C=C(b,T, \alpha)$ is a positive constant independent of $h$, $j$ and $\Delta t$.
\end{lemma}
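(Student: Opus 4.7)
The strategy is to view $A_{h,j}(\omega) = A_h + J^h_j(\omega)$ as a uniformly bounded perturbation of the sectorial generator $A_h$. By \assref{assumption3} one has $\|J^h_j(\omega)\|_{L(H)} \le b$ for every $\omega$, $j$ and $h$, and classical perturbation theory for sectorial operators (in the same spirit as \lemref{perturbedsemi}, which itself relies on \cite[Chapter III, Theorem 2.7]{Klaus}) shows that $-A_{h,j}(\omega)$ is itself sectorial with sector angle and constants that can be chosen independently of $\omega$, $j$ and $h$. This uniform sectoriality, combined with the semigroup bound $\|e^{-sA_{h,j}(\omega)}\|_{L(H)} \le Ke^{Kbs}$ from \lemref{perturbedsemi}, is the common engine for all three estimates.

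For (iii), I would use the integral representation
\begin{equation*}
(\mathbf{I} + tA_{h,j})^{-n} = \frac{1}{(n-1)!}\int_0^\infty u^{n-1} e^{-u} e^{-tu A_{h,j}} du,
\end{equation*}
applied after an auxiliary shift $A_{h,j}\rightsquigarrow A_{h,j}+cI$ that absorbs the growth factor $e^{Kbtu}$, and conclude by bounding the resulting integrand via \lemref{perturbedsemi}. For (ii), the algebraic identity
\begin{equation*}
A_h (\mathbf{I} + tA_{h,j})^{-1} = t^{-1}\bigl[\mathbf{I} - (\mathbf{I} + tA_{h,j})^{-1}\bigr] - J^h_j (\mathbf{I} + tA_{h,j})^{-1},
\end{equation*}
together with (iii) and $\|J^h_j\|_{L(H)}\le b$, yields $\|A_h(\mathbf{I}+tA_{h,j})^{-1}\|_{L(H)}\le Ct^{-1}$ on $(0,T]$. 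For general $\alpha\in[0,1)$ I invoke the moment inequality $\|A_h^\alpha v\|\le C\|A_h v\|^\alpha\|v\|^{1-\alpha}$ (a standard tool for fractional powers of sectorial operators) with $v=(\mathbf{I}+tA_{h,j})^{-1}w$, interpolating between (iii) and the $\alpha=1$ estimate just obtained.

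For (i), I would substitute the same integral representation inside the norm and slide $A_h^\alpha$ past the integral sign, reducing the estimate to a bound of the form $\|A_h^\alpha e^{-s A_{h,j}}\|_{L(H)} \le C s^{-\alpha}$ for $\alpha\in[0,1]$. This auxiliary bound I would prove via the variation of constants formula
\begin{equation*}
e^{-sA_{h,j}} = e^{-sA_h} - \int_0^s e^{-(s-r)A_h} J^h_j\, e^{-r A_{h,j}}\, dr,
\end{equation*}
applying $A_h^\alpha$ and using the smoothing estimates of \propref{prop1} together with the uniform bounds on $J^h_j$ and on $e^{-rA_{h,j}}$. Inserting this back into the integral and using $\Gamma(n-\alpha)/\Gamma(n)\le (n-1)^{-\alpha}$ for $n\ge 2$ and $\alpha\in[0,1]$ gives exactly $C((n-1)t)^{-\alpha}\le C(nt)^{-\alpha}$.

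The main obstacle is to keep every constant genuinely uniform in the random sample $\omega$, the time-step index $j$ and the mesh size $h$, even though $J^h_j$ and the resolvent $S^m_{h,\Delta t}(\omega)$ depend on all three. This is resolved by the $\omega$-free bound $\|J^h_j(\omega)\|_{L(H)}\le b$ from \assref{assumption3}, which propagates cleanly through every perturbation identity; a secondary difficulty is that $A_h^\alpha$ does not commute with the perturbed resolvent, which is handled by the interpolation step in (ii) and by the variation-of-constants detour in (i).
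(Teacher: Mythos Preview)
Your core approach matches the paper's: both exploit the Laplace representation
\[
(\mathbf{I}+tA_{h,j})^{-n} = \frac{t^{-n}}{(n-1)!}\int_0^\infty s^{n-1}e^{-s/t}S^h_j(s)\,ds
\]
and then control $\|A_h^\alpha S^h_j(s)\|$. The paper establishes (i) by interpolation between $\alpha=0$ (uniform boundedness of $S^h_j$ from \lemref{perturbedsemi}) and $\alpha=1$, the latter taken from \cite[Lemma~9(iii)]{Antjd1}, and then reads off (ii) and (iii) from the same integral formula. Your treatment of (ii) via the algebraic identity $A_h(\mathbf{I}+tA_{h,j})^{-1}=t^{-1}[\mathbf{I}-(\mathbf{I}+tA_{h,j})^{-1}]-J^h_j(\mathbf{I}+tA_{h,j})^{-1}$ followed by the moment inequality is a clean alternative that bypasses the integral representation at that step, but the overall architecture is the same.

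There is one genuine gap in your sketch of (i): the variation-of-constants route you propose for the auxiliary bound $\|A_h^\alpha e^{-sA_{h,j}}\|\le Cs^{-\alpha}$ does not close at the endpoint $\alpha=1$. After applying $A_h$ to the Duhamel formula and using $\|A_h e^{-(s-r)A_h}\|\le C(s-r)^{-1}$, the resulting integral $\int_0^s (s-r)^{-1}\,dr$ diverges, so no bound of the form $Cs^{-1}$ can be extracted this way. The fix is already contained in your opening paragraph: uniform sectoriality of $-A_{h,j}$ gives $\|A_{h,j}e^{-sA_{h,j}}\|\le Cs^{-1}$ directly, and writing $A_h=A_{h,j}-J^h_j$ yields $\|A_h e^{-sA_{h,j}}\|\le Cs^{-1}+b\,C_1\le Cs^{-1}$ for bounded $s$. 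This is precisely the content of \cite[Lemma~9(iii)]{Antjd1} that the paper cites at this point; with that correction your argument coincides with the paper's.
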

\begin{proof}
Note that for all $n\geq 2$, $\frac{1}{2}nt\leq (n-1)t$. Therefore $((n-1)t)^{-\alpha}\leq C (nt)^{-\alpha}$. It remains to prove the first inequality of  \eqref{essen1}. 
Using the interpolation theory, we only need to prove \eqref{essen1}  for $\alpha=0$ and $\alpha=1$. 
Since $\frac{1}{t}>0$ and the resolvent set of $-A_{h,j}$ contains $(0,\infty)$ \footnote{since \assref{assumption3} is fulfilled}, it follows from \cite[(5.23)]{Pazy}  that 
\begin{eqnarray}
\label{essen3}
\left(\mathbf{I}+tA_{h,j}(\omega)\right)^{-n}v&=&t^{-n}\left(\frac{1}{t}\mathbf{I}+A_{h,j}(\omega)\right)^{-n}v\nonumber\\
&=&\frac{t^{-n}}{(n-1)!}\int_0^{\infty}s^{n-1}e^{-\frac{1}{t}s}S^h_j(\omega)(s)vds,\quad  v\in H.
\end{eqnarray}
Taking the norm in both sides of \eqref{essen3} and using the uniformly boundedness of $S^h_j(\omega)$ (see \lemref{perturbedsemi}) yields
\begin{eqnarray}
\label{essen4}
\left\Vert\left(\mathbf{I}+tA_{h,j}(\omega)\right)^{-n}v\right\Vert&\leq&\frac{Ct^{-n}}{(n-1)!}\int_0^{\infty}s^{n-1}e^{-\frac{1}{t}s}\Vert v\Vert ds.
\end{eqnarray}
Using the change of variable $r=\frac{s}{t}$ yields
\begin{eqnarray}
\label{essen4}
\left\Vert\left(\mathbf{I}+tA_{h,j}(\omega)\right)^{-n}v\right\Vert\leq\frac{C}{(n-1)!}\int_0^{\infty}r^{n-1}e^{-r}\Vert v\Vert dr\leq C\Vert v\Vert.
\end{eqnarray}
 This shows that \eqref{essen1} holds for $\alpha=0$. Pre-multiplying both sides of \eqref{essen3} by $A_h$ yields
\begin{eqnarray}
\label{essen5}
A_h\left(\mathbf{I}+tA_{h,j}(\omega)\right)^{-n}v=\frac{t^{-n}}{(n-1)!}\int_0^{\infty}s^{n-1}e^{-\frac{1}{t}s}A_hS^h_j(\omega)(s)vds.
\end{eqnarray}
Taking the norm in both sides of \eqref{essen5} and using \cite[Lemma 9 (iii)]{Antjd1} yields
\begin{eqnarray}
\label{essen6}
\left\Vert\left(\mathbf{I}+tA_{h,j}(\omega)\right)^{-n}v\right\Vert&\leq&\frac{Ct^{-n}}{(n-1)!}\int_0^{\infty}s^{n-2}e^{-\frac{1}{t}s}\Vert v\Vert ds.
\end{eqnarray}
Using the change of variable $r=\frac{s}{t}$ yields
\begin{eqnarray}
\label{essen7}
\left\Vert\left(\mathbf{I}+tA_{h,j}(\omega)\right)^{-n}v\right\Vert&\leq&\frac{Ct^{-1}}{(n-1)!}\int_0^{\infty}u^{n-2}e^{-r}\Vert v\Vert dr\nonumber\\
&\leq& \frac{Ct^{-1}(n-2)!}{(n-1)!}\Vert v\Vert=C\left((n-1)t\right)^{-1}\Vert v\Vert.
\end{eqnarray}
This proves that \eqref{essen1} holds for $\alpha=1$, and the proof of \eqref{essen1} is completed by interpolation theory.
The proofs of \eqref{essen1a} and \eqref{essen1b} follow from the integral equation \eqref{essen3}.
\end{proof}

The following lemma will be useful in our convergence analysis.
\begin{lemma}
\label{lemma7}
Let Assumptions \ref{assumption1} and \ref{assumption3} be fulfilled. 
\begin{itemize}
\item[(i)]
For all $\alpha\in (0,1]$ it holds that
\begin{eqnarray*}
\left\Vert A_h^{\alpha}\left(\prod_{j=i}^mS^j_{h,\Delta t}(\omega)\right)\right\Vert_{L(H)}\leq Ct_{m-i+1}^{-\alpha},\quad 0\leq i\leq m\leq M,\quad 0\leq k\leq M.
\end{eqnarray*}
\item[(ii)]
For all $\alpha_1,\alpha_2\in [0,1)$ it holds that
\begin{eqnarray*}
\left\Vert A_h^{\alpha_1}\left(\prod_{j=i}^mS^j_{h,\Delta t}(\omega)\right)A_h^{-\alpha_2}\right\Vert_{L(H)}\leq Ct_{m-i+1}^{-\alpha_1+\alpha_2},\quad 0\leq i\leq m\leq M,\quad 0\leq k\leq M,
\end{eqnarray*}
\end{itemize}
where $C=C(b,T, \alpha, \alpha_1, \alpha_2)$ is a positive constant independent of $h$, $i$, $m$, $M$ and $\Delta t$.
\end{lemma}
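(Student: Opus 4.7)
The plan is to prove (i) via a Laplace-type integral representation of the resolvent in terms of the perturbed semigroup, reducing the estimate to a variable-time generalization of \lemref{lemma5}, and then to derive (ii) by the same mechanism.

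Concretely, since each $-A_{h,j}(\omega)$ generates the perturbed analytic semigroup $e^{-sA_{h,j}(\omega)}$ of \lemref{perturbedsemi}, the resolvent admits the representation
\[
S^j_{h,\Dt}(\omega) = \frac{1}{\Dt}\int_0^\infty e^{-s/\Dt}\, e^{-s A_{h,j}(\omega)}\, ds.
\]
Substituting into the product of $n := m - i + 1$ factors and commuting $A_h^\alpha$ inside the multiple integral yields
\[
A_h^\alpha\prod_{j=i}^m S^j_{h,\Dt}(\omega) = \frac{1}{(\Dt)^n}\int_{[0,\infty)^n} e^{-\sigma/\Dt}\,A_h^\alpha \prod_{j=i}^m e^{-s_j A_{h,j}(\omega)}\, ds_i\cdots ds_m,
\]
with $\sigma := \sum_j s_j$. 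I then assert the variable-time generalization of \lemref{lemma5}:
\[
\left\|A_h^\alpha \prod_{j=i}^m e^{-s_j A_{h,j}(\omega)}\right\|_{L(H)} \leq C\sigma^{-\alpha}, \quad \alpha\in(0,1],
\]
whose proof parallels that of \lemref{lemma5}, relying only on analyticity of each perturbed semigroup and the uniform bound $\|J^h_j\|_{L(H)}\leq b$ from \assref{assumption3}. Combined with the change-of-variables identity
\[
\int_{[0,\infty)^n} f(s_i+\cdots+s_m)\, ds_i\cdots ds_m = \int_0^\infty f(\sigma)\,\frac{\sigma^{n-1}}{(n-1)!}\, d\sigma,
\]
the estimate reduces to $C(\Dt)^{-\alpha}\,\Gamma(n-\alpha)/\Gamma(n)$. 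Since $\Gamma(n-\alpha)/\Gamma(n)\leq Cn^{-\alpha}$ for all $n\geq 1$ and $\alpha\in(0,1)$ (with the limiting case $n=1,\alpha=1$ handled directly via $A_h S^i_{h,\Dt} = \Dt^{-1}(\mathbf{I} - S^i_{h,\Dt}) - J^h_i S^i_{h,\Dt}$), one obtains $C(n\Dt)^{-\alpha}=Ct_{m-i+1}^{-\alpha}$, which is (i).

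For (ii), I repeat the same procedure with $A_h^{-\alpha_2}$ inserted on the right. The relevant semigroup estimate becomes $\|A_h^{\alpha_1}\prod_j e^{-s_j A_{h,j}(\omega)}A_h^{-\alpha_2}\|_{L(H)} \leq C\sigma^{\alpha_2-\alpha_1}$, which follows from the variable-time smoothing bound combined with the boundedness of $A_h^{-\alpha_2}$ (since $\alpha_2<1$ and $A_h$ is coercive), distinguishing the cases $\alpha_1\geq\alpha_2$ (smoothing) and $\alpha_1<\alpha_2$ (in which the bound $\sigma\leq T$ is also used). The Gamma-function computation is unchanged and delivers $Ct_{m-i+1}^{\alpha_2-\alpha_1}$.

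The principal obstacle is establishing the variable-time semigroup smoothing estimate that underpins both parts: \lemref{lemma5} is stated with uniform step $\Dt$, and its extension to non-uniform times $s_j$ requires re-examining the perturbative argument. Fortunately, that argument relies only on analyticity of the perturbed semigroups and the uniform bound on $J^h_j$, both of which are independent of time, so the extension is essentially a careful bookkeeping exercise rather than requiring new ideas.
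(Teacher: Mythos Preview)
Your approach is genuinely different from the paper's. The paper never passes to the semigroup via a Laplace integral; instead, for part (i) it compares the varying product $\prod_{j=i}^m S^j_{h,\Dt}$ to the fixed-operator power $(S^i_{h,\Dt})^{m-i+1}$, bounds the latter directly by \lemref{lemma6}, and controls the difference $\Delta^h_{m,i}:=\prod_{j=i}^m S^j_{h,\Dt}-(S^i_{h,\Dt})^{m-i+1}$ by a telescoping identity that expresses it as $\Delta t\sum_j(\cdots)(J^h_i-J^h_{j+i+1})(\cdots)$. This yields a self-referential inequality $\|A_h^\alpha\Delta^h_{m,i}\|\leq C+C\Delta t\sum_k\|A_h^\alpha\Delta^h_{m,k}\|$, closed by the discrete Gronwall lemma. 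Part (ii) is handled by the same comparison, this time showing $\|A_h^{\alpha_1}\Delta^h_{m,i}A_h^{-\alpha_2}\|\leq C$ directly from part (i) without a second Gronwall. Your Laplace-transform route is conceptually cleaner and yields both parts by one computation, but it transfers all the work to the variable-time smoothing estimate $\|A_h^\alpha\prod_j e^{-s_jA_{h,j}}\|\leq C\sigma^{-\alpha}$, which you assert but do not prove. This extension is believable (and you correctly identify it as the crux), but be aware that you need the constant $C$ to be independent of the number $n$ of factors, or at worst of the form $Ce^{c\sigma}$; a bound carrying a factor $K^n$ from naively multiplying the $n$ semigroup bounds of \lemref{perturbedsemi} would destroy the Gamma-function step. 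The paper's fully discrete Gronwall argument avoids this issue by never leaving the resolvent setting, which is what it buys. One minor point: your handling of (ii) for $\alpha_1<\alpha_2$ invokes ``$\sigma\leq T$'', but $\sigma$ is the integration variable over $[0,\infty)$; what you actually need (and what suffices) is simply that part (i) together with $\|A_h^{-\alpha_2}\|\leq C$ already gives a $t_{m-i+1}^{-\alpha_1}$ bound, which is enough for all applications in the paper since only the regime $\alpha_1\geq\alpha_2$ is ever used.
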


\begin{proof}   Note that the proof of the lemma in the case  $i=m$ is straightforward from \lemref{lemma6}. We only concentrate on the case $i<m$.
\begin{itemize}
\item[(i)]
Using \lemref{lemma6} it holds that
\begin{eqnarray}
\left\Vert A_h^{\alpha}(\mathbf{I}+\Delta tA_{h,i}(\omega))^{-(m-i+1)}\right\Vert_{L(H)}\leq  Ct_{m-i+1}^{-\alpha}.
\end{eqnarray}
It remains to estimate $A_h^{\alpha}\Delta^h_{m,i}(\omega)$, where
\begin{eqnarray}
\label{fuj}
\Delta_{m,i}^h(\omega):=\prod_{j=i}^mS^j_{h,\Delta t}(\omega)-\left(S^i_{h,\Delta t}(\omega)\right)^{m-i+1}.
\end{eqnarray}
One can easily check that the following identity holds
\begin{eqnarray}
\label{ident1}
&&\left(\mathbf{I}+\Delta tA_{h,j+1}(\omega)\right)^{-1}-(\mathbf{I}+\Delta tA_{h,i}(\omega))^{-1}\nonumber\\
&=&\Delta t(\mathbf{I}+\Delta tA_{h,j+1}(\omega))^{-1}\left(A_{h,i}(\omega)-A_{h,j+1}(\omega)\right)(\mathbf{I}+\Delta tA_{h,i}(\omega))^{-1}\nonumber\\
&=&\Delta t(\mathbf{I}+\Delta tA_{h,j+1}(\omega))^{-1}\left(J^h_i(\omega)-J^h_{j+1}(\omega)\right)(\mathbf{I}+\Delta tA_{h,i}(\omega))^{-1}.
\end{eqnarray}
Using the telescopic sum, it holds that
\begin{eqnarray}
\label{ident1a}
\Delta_{m,i}^h(\omega)&=&\sum_{j=0}^{m-i-1}\left(\prod_{k=j+i+1}^mS^k_{h,\Delta t}(\omega)\right)\left(\mathbf{I}+\Delta tA_{h,j+i+1}(\omega)\right)\nonumber\\
&&\left[\left(\mathbf{I}+\Delta tA_{h,j+i+1}(\omega)\right)^{-1}-(\mathbf{I}+\Delta tA_{h,i}(\omega))^{-1}\right]
\left(\mathbf{I}+\Delta tA_{h,i}(\omega)\right)^{-j-1}.
\end{eqnarray}
Substituting the identity \eqref{ident1} in \eqref{ident1a} yields
\begin{eqnarray}
\label{ident2}
&&\Delta_{m,i}^h(\omega)\nonumber\\
&=&\Delta t\sum_{j=0}^{m-i-1}\left(\prod_{k=j+i+1}^mS^k_{h,\Delta t}(\omega)\right)\left(J^h_{i}(\omega)-J^h_{j+i+1}(\omega)\right)\left(\mathbf{I}+\Delta tA_{h,i}(\omega)\right)^{-j-2}\nonumber\\
&=&\Delta t\sum_{j=0}^{m-i-1}\left(\mathbf{I}+\Delta tA_{h,j+i+1}(\omega)\right)^{-(m-j-i)}\left(J^h_{i}(\omega)-J^h_{j+i+1}(\omega)\right)\left(\mathbf{I}+\Delta tA_{h,i}(\omega)\right)^{-j-2}\nonumber\\
&+&\Delta t\sum_{j=0}^{m-i-1}\Delta^h_{m,j+i+1}(\omega)\left(J^h_{i}(\omega)-J^h_{j+i+1}(\omega)\right)\left(\mathbf{I}+\Delta tA_{h,i}(\omega)\right)^{-j-2}.
\end{eqnarray}
Therefore we have
{\small
\begin{eqnarray}
\label{ident3}
&&A_h^{\alpha}\Delta_{m,i}^h(\omega)\nonumber\\
&=&\Delta t\sum_{j=0}^{m-i-1}A_h^{\alpha}\left(\mathbf{I}+\Delta tA_{h,j+i+1}(\omega)\right)^{-(m-j-i)}\left(J^h_{i}(\omega)-J^h_{j+i+1}(\omega)\right)\left(\mathbf{I}+\Delta tA_{h,i}(\omega)\right)^{-j-1}\nonumber\\
&+&\Delta t\sum_{j=0}^{m-i-1}A_h^{\alpha}\Delta^h_{m,j+i+1}(\omega)\left(J^h_{i}(\omega)-J^h_{j+i+1}(\omega)\right)\left(\mathbf{I}+\Delta tA_{h,i}(\omega)\right)^{-j-1}.
\end{eqnarray}
}
Taking the norm in both sides of \eqref{ident3},  using   triangle inequality and \lemref{lemma6}  yields
\begin{eqnarray}
\label{ident4}
\Vert A_h^{\alpha}\Delta^h_{m,i}(\omega)\Vert_{L(H)}&\leq &C\Delta t\sum_{j=0}^{m-i-1}t_{m-j-i}^{-\alpha}+C\Delta t\sum_{j=0}^{m-i-1}\Vert A_h^{\alpha}\Delta^h_{m,j+i+1}(\omega)\Vert_{L(H)}\nonumber\\
&\leq& C+C\Delta t\sum_{j=i+1}^{m}\Vert A_h^{\alpha}\Delta^h_{m,j}(\omega)\Vert_{L(H)}.
\end{eqnarray}
Applying the discrete Gronwall`s lemma to \eqref{ident4} yields
\begin{eqnarray*}
\Vert A_h^{\alpha}\Delta^h_{m,i}(\omega)\Vert_{L(H)}\leq C.
\end{eqnarray*}
This completes the proof of (i).

\item[(ii)]
Following  the same lines as in \lemref{lemma6}, we can show that
\begin{eqnarray}
\label{ident5}
\left\Vert A_h^{\alpha_1}\left(\mathbf{I}+\Delta tA_{h,i}(\omega)\right)^{-(m-i+1)}A_{h,i}^{-\alpha_2}\right\Vert_{L(H)}
&\leq & Ct_{m-i+1}^{-\alpha_1+\alpha_2}.
\end{eqnarray}
It remains to bound $A_h^{\alpha_1}\Delta^h_{m,i}(\omega)A_h^{-\alpha_2}$, where $\Delta^h_{m,i}(\omega)$ is defined by \eqref{fuj}.
From \eqref{ident2}, it holds that
\begin{eqnarray}
\label{ident6}
A_h^{\alpha_1}\Delta_{m,i}^h(\omega)A_h^{-\alpha_2}
&=&\Delta t\sum_{j=0}^{m-i-1}A_h^{\alpha_1}\left(\mathbf{I}+\Delta tA_{h,j+i+1}(\omega)\right)^{-(m-j-i)}\left(J^h_{i}(\omega)-J^h_{j+i+1}(\omega)\right)\nonumber\\
&&\left(\mathbf{I}+\Delta tA_{h,i}(\omega)\right)^{-j-1}A_h^{-\alpha_2}\nonumber\\
&+&\Delta t\sum_{j=0}^{m-i-1}A_h^{\alpha_1}\Delta^h_{m,j+i+1}(\omega)\left(J^h_{i}(\omega)-J^h_{j+i+1}(\omega)\right)\nonumber\\
&&\left(\mathbf{I}+\Delta tA_{h,i}(\omega)\right)^{-j-1}A_h^{-\alpha_2}.
\end{eqnarray}
Taking the norm in both sides of \eqref{ident6}, using  triangle inequality, Lemmas \ref{lemma6} and \ref{lemma7} (i) yields 
\begin{eqnarray*}
\Vert A_h^{\alpha_1}\Delta_{m,i}^h(\omega)A_h^{-\alpha_2}\Vert_{L(H)}
&\leq&C\Delta t\sum_{j=0}^{m-i-1}\left\Vert A_h^{\alpha_1}\Delta^h_{m,j+i+1}(\omega)\right\Vert_{L(H)}\nonumber\\
&+&C\Delta t\sum_{j=0}^{m-i-1}\left\Vert A_h^{\alpha_1}\left(\mathbf{I}+\Delta tA_{h,j+i+1}(\omega)\right)^{-(m-j-i)}\right\Vert_{L(H)}\nonumber\\
&\leq& C\Delta t\sum_{j=0}^{m-i-1}+C\Delta t\sum_{j=0}^{m-i-1}t_{m-j-i}^{-\alpha_1}\nonumber\\
&\leq& C.
\end{eqnarray*}
This proves (ii) and the proof of the lemma is completed.
\end{itemize}
\end{proof}

The following lemma will be useful in our convergence analysis.
\begin{lemma}
\label{lemma8}
Let Assumptions  \ref{assumption1} and \ref{assumption3} be fulfilled. 
\begin{itemize}
\item[(i)]  For all  $ \alpha_1,\alpha_2\in(0,1]$,  $0\leq j\leq M$ and  $\omega\in \Omega$ the following estimate holds 
\begin{eqnarray}
\label{val1}
\left\Vert A_h^{-\alpha_1}\left(e^{A_{h,j}(\omega)\Delta t}-S^j_{h,\Delta t}(\omega)\right)A_h^{-\alpha_2}\right\Vert_{L(H)}&\leq& C\Delta t^{\alpha_1+\alpha_2}.
\end{eqnarray}
\item[(ii)] For all  $ \alpha_1\in[0,1]$, $\alpha_2\in (0,1)$, $0\leq j\leq M$ and  $\omega\in \Omega$ the following estimate holds
\begin{eqnarray}
\label{val2}
\left\Vert A_h^{\alpha_1}\left(e^{A_{h,j}(\omega)\Delta t}-S^j_{h,\Delta t}(\omega)\right)A_h^{-\alpha_2}\right\Vert_{L(H)}&\leq& C\Delta t^{-\alpha_1+\alpha_2},
\end{eqnarray}
\end{itemize}
where $C=C(b,T, \alpha_1, \alpha_2)$ is a positive constant independent of $h$, $j$,  $M$ and $\Delta t$.
\end{lemma}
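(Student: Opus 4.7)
The plan is to start from the integral identity
\begin{equation*}
S^{j}_{h,\Dt}(\omega)-e^{A_{h,j}(\omega)\Dt}
=\int_0^{\Dt}s\,A_{h,j}(\omega)^{2}\,e^{A_{h,j}(\omega)(\Dt-s)}\bigl(\mathbf{I}+sA_{h,j}(\omega)\bigr)^{-2}\,ds,
\end{equation*}
obtained by differentiating $\phi(s):=e^{A_{h,j}(\omega)(\Dt-s)}(\mathbf{I}+sA_{h,j}(\omega))^{-1}$ between $s=0$ and $s=\Dt$ and using that all functions of $A_{h,j}(\omega)$ commute with one another. Both (i) and (ii) then reduce to pointwise-in-$s$ bounds on the integrand after sandwiching by $A_h^{\pm\alpha_1}$ on the left and $A_h^{-\alpha_2}$ on the right; using the group property I split $A_{h,j}^{2}\,e^{A_{h,j}(\Dt-s)}$ as $A_{h,j}\,e^{A_{h,j}(\Dt-s)/2}\cdot e^{A_{h,j}(\Dt-s)/2}A_{h,j}$ so that the singularities at $s=\Dt$ and $s=0$ are absorbed on their own sides.

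For part (i) I bound the two half-blocks separately. On the left, write $A_h^{-\alpha_1}A_{h,j}=A_h^{1-\alpha_1}+A_h^{-\alpha_1}J^h_j$ using $A_{h,j}=A_h+J^h_j$; the mixed smoothing estimate $\|A_h^{1-\alpha_1}e^{A_{h,j}r}\|_{L(H)}\le Cr^{\alpha_1-1}$ (in the spirit of \cite[Lemma~9]{Antjd1}) handles the first summand, and the uniform bounds $\|A_h^{-\alpha_1}\|_{L(H)}\le C$ together with $\|J^h_j\|_{L(H)}\le b$ from \assref{assumption3} handle the second, giving $\|A_h^{-\alpha_1}A_{h,j}\,e^{A_{h,j}(\Dt-s)/2}\|_{L(H)}\le C(\Dt-s)^{\alpha_1-1}$. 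On the right, perform the exchange $A_h^{-\alpha_2}=A_{h,j}^{-\alpha_2}[A_{h,j}^{\alpha_2}A_h^{-\alpha_2}]$, in which the bracket is uniformly bounded by the equivalence of fractional-power domains induced by the uniformly bounded perturbation $J^h_j$; commuting functions of $A_{h,j}$ then yields $e^{A_{h,j}(\Dt-s)/2}A_{h,j}^{1-\alpha_2}(\mathbf{I}+sA_{h,j})^{-2}$, which \lemref{lemma6}(i) (in its intrinsic $A_{h,j}$ version, with $1-\alpha_2\in[0,1)$) bounds by $Cs^{\alpha_2-1}$. The integrand is therefore $Cs^{\alpha_2}(\Dt-s)^{\alpha_1-1}$, and the Beta-function identity $\int_0^{\Dt}s^{\alpha_2}(\Dt-s)^{\alpha_1-1}\,ds=\Dt^{\alpha_1+\alpha_2}B(\alpha_2+1,\alpha_1)$ closes the estimate.

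For part (ii) the right half-block is identical. For the left one, use the opposite exchange $A_h^{\alpha_1}=[A_h^{\alpha_1}A_{h,j}^{-\alpha_1}]A_{h,j}^{\alpha_1}$, again with a uniformly bounded bracket, so the task reduces to controlling $A_{h,j}^{1+\alpha_1}\,e^{A_{h,j}r}$. This exponent exceeds~$1$, so a direct smoothing bound would produce a non-integrable singularity at $s=\Dt$. I circumvent this by redistributing: split $A_{h,j}^{2}=A_{h,j}^{1-\epsilon}\cdot A_{h,j}^{1+\epsilon}$ with an arbitrary $\epsilon\in(0,1-\alpha_1+\alpha_2)$, keep $A_{h,j}^{1-\epsilon}$ on the semigroup side (producing $(\Dt-s)^{-(1-\epsilon)}$), and push the surplus $A_{h,j}^{1+\alpha_1+\epsilon-\alpha_2}$ (after the right-side exchange) onto $(\mathbf{I}+sA_{h,j})^{-2}$, factored as $A_{h,j}^{c_1}(\mathbf{I}+sA_{h,j})^{-1}\cdot A_{h,j}^{c_2}(\mathbf{I}+sA_{h,j})^{-1}$ with $c_1,c_2\in[0,1)$ summing to $1+\alpha_1+\epsilon-\alpha_2$ (via \lemref{lemma6}(ii)), producing $s^{-(1+\alpha_1+\epsilon-\alpha_2)}$. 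The resulting integrand $Cs^{-\alpha_1+\alpha_2-\epsilon}(\Dt-s)^{-(1-\epsilon)}$ is integrable in both endpoints for the chosen $\epsilon$, and the Beta-function integral evaluates to $C\Dt^{-\alpha_1+\alpha_2}$.

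The main obstacle is the non-commutativity of $A_h$ with the random perturbation $A_{h,j}=A_h+J^h_j(\omega)$: only the one-sided exchanges $A_h^{\alpha}A_{h,j}^{-\alpha}$ and $A_{h,j}^{\alpha}A_h^{-\alpha}$ extend to bounded operators on $H$ (their reverses do not), so one must consistently insert each exchange on the correct side. The more delicate point, specific to part (ii), is that as $\alpha_1\uparrow 1$ and $\alpha_2\downarrow 0$ the total power of $A_{h,j}$ to be distributed between the semigroup and the resolvent approaches~$3$; both singularities then drift toward the edge of the integrability region of the Beta integral, and the small cushion $\epsilon>0$ provides precisely the margin needed at both endpoints $s=0$ and $s=\Dt$.
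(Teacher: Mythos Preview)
Your proposal is correct and starts from exactly the same integral identity as the paper (obtained by differentiating $s\mapsto(\mathbf{I}+sA_{h,j})^{-1}e^{-(\Dt-s)A_{h,j}}$), but the way you distribute the powers is different. In the paper the two copies of $A_{h,j}$ are split one to the resolvent side and one to the semigroup side, and the conversion from $A_{h,j}$ to $A_h$ is done by the elementary decomposition $A_{h,j}=A_h+J^h_j$ with $\Vert J^h_j\Vert_{L(H)}\le b$: this turns $A_h^{-\alpha_1}A_{h,j}(\mathbf{I}+sA_{h,j})^{-2}$ into $A_h^{1-\alpha_1}(\mathbf{I}+sA_{h,j})^{-2}$ plus a bounded remainder (handled by \lemref{lemma6}), and $e^{-(\Dt-s)A_{h,j}}A_{h,j}A_h^{-\alpha_2}$ into $e^{-(\Dt-s)A_{h,j}}A_h^{1-\alpha_2}$ plus a bounded remainder (handled by \cite[Lemma~9]{Antjd1}). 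No appeal to the equivalence of fractional-power domains $A_h^{\alpha}\leftrightarrow A_{h,j}^{\alpha}$ is needed, nor any ``intrinsic $A_{h,j}$ version'' of \lemref{lemma6}. Your route via the bounded exchanges $A_h^{\alpha}A_{h,j}^{-\alpha}$ and $A_{h,j}^{\alpha}A_h^{-\alpha}$ is valid (these are indeed uniformly bounded since $J^h_j$ is a uniformly bounded perturbation), but it is heavier machinery than what the paper uses for part~(i). On the other hand, for part~(ii) the paper simply declares the proof ``similar'' without details; your explicit $\epsilon$-redistribution is a genuine addition, because a naive transcription of the part~(i) argument with $A_h^{\alpha_1}$ in place of $A_h^{-\alpha_1}$ runs into a borderline non-integrable singularity when $\alpha_1=1$, and your choice $0<\epsilon<1-\alpha_1+\alpha_2$ is exactly what keeps both endpoints of the Beta integral integrable across the full stated range $\alpha_1\in[0,1]$, $\alpha_2\in(0,1)$.
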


\begin{proof}
We   only prove  \eqref{val1} since the proof of \eqref{val2} is similar. 
Let us set 
\begin{eqnarray*}
K^j_{h,\Delta t}(\omega):=e^{A_{h,j}(\omega)\Delta t}-S^j_{h,\Delta t}(\omega).
\end{eqnarray*}
One can easily check that
\begin{eqnarray}
\label{vend1}
-K^j_{h,\Delta t}(\omega)&=&\int_0^{\Delta t}\frac{d}{ds}\left(\left(\mathbf{I}+sA_{h,j}(\omega)\right)^{-1}e^{-(\Delta t-s)A_{h,j}(\omega)}\right)ds\nonumber\\
&=&\int_0^{\Delta t}sA_{h,j}^2(\omega)\left(\mathbf{I}+sA_{h,j}(\omega)\right)^{-2}e^{-(\Delta t-s)A_{h,j}(\omega)}ds\nonumber\\
&=&\int_0^{\Delta t}sA_{h,j}(\omega)\left(\mathbf{I}+sA_{h,j}(\omega)\right)^{-2}A_{h,j}(\omega)e^{-(\Delta t-s)A_{h,j}(\omega)}ds.
\end{eqnarray}
From \eqref{vend1} it holds that
\begin{eqnarray}
\label{vend2}
-A_h^{-\alpha_1}K^j_{h,\Delta t}(\omega)A_h^{-\alpha_2}=\int_0^{\Delta t}sA_h^{-\alpha_1}A_{h,j}(\omega)\left(\mathbf{I}+sA_{h,j}(\omega)\right)^{-2}e^{-(\Delta t-s)A_{h,j}} A_{h,j}(\omega)A_h^{-\alpha_2}ds.
\end{eqnarray}
Taking the norm in both sides of \eqref{vend2} yields 
\begin{eqnarray}
\label{vend3}
&&\left\Vert -A_h^{-\alpha_1}K^j_{h,\Delta t}(\omega)A_h^{-\alpha_2}\right\Vert_{L(H)}\nonumber\\
&\leq&\int_0^{\Delta t}s\left\Vert A_{h}^{-\alpha_1}A_{h,j}(\omega)\left(\mathbf{I}+sA_{h,j}(\omega)\right)^{-2}\right\Vert_{L(H)}\left\Vert e^{-(\Delta t-s)A_{h,j}(\omega)}A_{h,j}(\omega)A_h^{-\alpha_2}\right\Vert_{L(H)} ds.
\end{eqnarray}
Using   triangle inequality and \lemref{lemma6}, it holds that
\begin{eqnarray}
\label{vend4}
&&\left\Vert A_h^{-\alpha_1}A_{h,j}(\omega)\left(\mathbf{I}+sA_{h,j}(\omega)\right)^{-2}\right\Vert_{L(H)}\nonumber\\
&\leq& \left\Vert A_h^{-\alpha_1+1}\left(\mathbf{I}+sA_{h,j}(\omega)\right)^{-2}\right\Vert_{L(H)}+\left\Vert A_h^{-\alpha_1}J^h_j(\omega)\left(\mathbf{I}+sA_{h,j}(\omega)\right)^{-2}\right\Vert_{L(H)}\nonumber\\
&\leq& Cs^{-1+\alpha_1}+C\nonumber\\
&\leq& Cs^{-1+\alpha_1}.
\end{eqnarray}
Using triangle inequality and \cite[Lemma 9 (ii)]{Antjd1}, it holds that
\begin{eqnarray}
\label{vend5}
&&\left\Vert e^{-(\Delta t-s)A_{h,j}(\omega)}A_{h,j}(\omega)A_h^{-\alpha_2}\right\Vert_{L(H)}\nonumber\\
&\leq& \left\Vert e^{-(\Delta t-s)A_{h,j}(\omega)}A_h^{1-\alpha_2}\right\Vert_{L(H)}+\left\Vert e^{-(\Delta t-s)A_{h,j}(\omega)}J^h_jA_h^{-\alpha_2}\right\Vert_{L(H)}\nonumber\\
&\leq& C(\Delta t-s)^{-1+\alpha_2}+C\nonumber\\
&\leq& C(\Delta t-s)^{-1+\alpha_2}.
\end{eqnarray}
Substituting \eqref{vend5} and \eqref{vend4} in \eqref{vend3} yields 
\begin{eqnarray*}
\left\Vert -A_h^{-\alpha_1}K^j_{h,\Delta t}(\omega)A_h^{-\alpha_2}\right\Vert_{L(H)}\leq  C\int_0^{\Delta t}ss^{-1+\alpha_1}(\Delta t-s)^{-1+\alpha_2}ds\leq  C\Delta t^{\alpha_1+\alpha_2}.
\end{eqnarray*}
This completes the proof of \eqref{val1}. The proof of \eqref{val2} is similar.
\end{proof}

The following lemma can be found in \cite{Larsson2}.
\begin{lemma}
\label{lemma9}
For all $\alpha_1,\alpha_2>0$ and $\alpha\in [0,1]$, there exist  two positive constants $C_{\alpha_1\alpha_2}$ and $C_{\alpha,\alpha_2}$ such that
\begin{eqnarray}
\label{son1}
\Delta t\sum_{j=1}^mt_{m-j+1}^{-1+\alpha_1}t_j^{-1+\alpha_2}\leq C_{\alpha_1\alpha_2}t_m^{-1+\alpha_1+\alpha_2},\quad
\Delta t\sum_{j=1}^mt_{m-j+1}^{-\alpha}t_j^{-1+\alpha_2}\leq C_{\alpha\alpha_2}t_m^{-\alpha+\alpha_2}.
\end{eqnarray}
\end{lemma}

\begin{proof}
The proof of the first estimate of \eqref{son1} comes from the comparison with the integral 
\begin{eqnarray*}
\int_0^t(t-s)^{-1+\alpha_1}s^{-1+\alpha_2}ds.
\end{eqnarray*}
The proof of the second estimate of \eqref{son1} is a consequence of the first one. 
\end{proof}
\begin{lemma}
\label{lemma10}
Let $0\leq \alpha< 2$ and let \assref{assumption1} be fulfilled. 
\begin{itemize}
\item[(i)] If $v\in \mathcal{D}\left((A^{\frac{\alpha}{2}})\right)$, $\omega\in \Omega$, $0\leq i\leq M$, then the following estimate holds
\begin{eqnarray*}
\left\Vert \left(\prod_{j=i}^{m}e^{A_{h,j}(\omega)\Delta t}\right)P_hv-\left(\prod_{j=i}^{m}S_{h,\Delta t}^j(\omega)\right)P_hv\right\Vert\leq C\Delta t^{\frac{\alpha}{2}}\Vert v\Vert_{\alpha}.
\end{eqnarray*}
\item[(ii)]
For non-smooth data, i.e. for $v\in H$ and for all  $\omega\in \Omega$, $0\leq i<m\leq M$, it holds that 
\begin{eqnarray*}
\left\Vert \left(\prod_{j=i}^{m}e^{A_{h,j}(\omega)\Delta t}\right)P_hv-\left(\prod_{j=i}^{m}S_{h,\Delta t}^j(\omega)\right)P_hv\right\Vert\leq C\Delta t^{\frac{\alpha}{2}}t_{m-i}^{-\frac{\alpha}{2}}\Vert v\Vert.
\end{eqnarray*}
\item[(iii)]
For all $\alpha_1, \alpha_2\in[0,1)$ such that $\alpha_1\leq\alpha_2$, $\omega \in\Omega$ and $0\leq i< m\leq M$, it holds that 
\begin{eqnarray*}
\left\Vert \left[\left(\prod_{j=i}^{m}e^{A_{h,j}(\omega)\Delta t}\right)-\left(\prod_{j=i}^{m}S_{h,\Delta t}^j(\omega)\right)\right]A_h^{\alpha_1-\alpha_2}\right\Vert_{L(H)}\leq C\Delta t^{\alpha_2}t_{m-i}^{-\alpha_1}, 
\end{eqnarray*}
\end{itemize}
where $C=C(b,T, \alpha, \alpha_1,\alpha_2)$ is a positive constant independent of $h$, $i$, $m$, $M$ and $\Delta t$.
\end{lemma}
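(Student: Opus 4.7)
My plan is to prove part (iii) first and obtain parts (i) and (ii) as direct specializations: taking $\alpha_1=0$, $\alpha_2=\alpha/2$ and combining with the standard finite element bound $\|A_h^{\alpha/2}P_h v\|\leq C\|v\|_{\alpha}$ yields (i), while taking $\alpha_1=\alpha_2=\alpha/2$ and applying the $L(H)$-norm bound to $P_h v$ (using $\|P_h\|_{L(H)}\leq 1$) yields (ii).

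The starting point for (iii) is the standard telescoping identity
\begin{equation*}
\prod_{j=i}^{m}E_j-\prod_{j=i}^{m}S_j=\sum_{k=i}^{m}\left(\prod_{j=k+1}^{m}E_j\right)(E_k-S_k)\left(\prod_{j=i}^{k-1}S_j\right),
\end{equation*}
where I write $E_j:=e^{\Delta t A_{h,j}(\omega)}$ and $S_j:=S_{h,\Delta t}^j(\omega)$ and adopt the empty-product convention of \secref{Preparation}. Multiplying on the right by $A_h^{\alpha_1-\alpha_2}$, I then estimate each summand and split the sum into the \emph{interior} part $i<k<m$ and the two \emph{boundary} parts $k=i$ and $k=m$, where one of the products reduces to the identity.

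For an interior index $k$, I will insert two resolutions of the identity $A_h^{a}A_h^{-a}$ and write the summand as a three-factor product
\begin{equation*}
\left[\left(\prod_{j=k+1}^{m}E_j\right)A_h^{a_1}\right]\left[A_h^{-a_1}(E_k-S_k)A_h^{-a_2}\right]\left[A_h^{a_2}\left(\prod_{j=i}^{k-1}S_j\right)A_h^{\alpha_1-\alpha_2}\right],
\end{equation*}
with the choice $a_1=a_2=(1+\alpha_2)/2\in(1/2,1)$. This choice respects the hypotheses of all three relevant lemmas: \lemref{lemma5} bounds the first factor by $Ct_{m-k}^{-a_1}$, \lemref{lemma8}(i) bounds the middle factor by $C\Delta t^{a_1+a_2}=C\Delta t^{1+\alpha_2}$, and \lemref{lemma7}(ii) bounds the third factor by $Ct_{k-i}^{-a_2+\alpha_2-\alpha_1}$. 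Summing over $i<k<m$, extracting a single factor of $\Delta t$ to form a Riemann sum and invoking the beta-kernel estimate \lemref{lemma9} then produces exactly the target $C\Delta t^{\alpha_2}t_{m-i}^{-\alpha_1}$. The two boundary terms I handle with a simpler two-factor split: for $k=m$ I use $[(E_m-S_m)A_h^{-\alpha_2}]\cdot[A_h^{\alpha_2}(\prod_{j=i}^{m-1}S_j)A_h^{\alpha_1-\alpha_2}]$, estimated via \lemref{lemma8}(ii) and \lemref{lemma7}(ii); for $k=i$ I use $[(\prod_{j=i+1}^{m}E_j)A_h^{\alpha_1}]\cdot[A_h^{-\alpha_1}(E_i-S_i)A_h^{\alpha_1-\alpha_2}]$, estimated via \lemref{lemma5} and \lemref{lemma8}(i). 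The borderline subcase $\alpha_1=\alpha_2$ of this last split requires one extra step, namely the auxiliary bound $\|A_h^{-\alpha_1}(E_i-S_i)\|_{L(H)}\leq C\Delta t^{\alpha_1}$, which I would derive by writing $E_i-S_i=(\mathbf{I}-S_i)-(\mathbf{I}-E_i)$ and applying the smoothing estimates of \lemref{lemma6} to each piece.

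The main obstacle I foresee is exponent bookkeeping: the splitting exponents $a_1,a_2$ must be chosen so that the hypotheses of \lemref{lemma5}, \lemref{lemma7}(ii) and \lemref{lemma8}(i) hold simultaneously (which is possible precisely because $\alpha_2<1$), and so that the kernel sum of \lemref{lemma9} returns the exponents $\alpha_2$ and $-\alpha_1$ rather than suboptimal powers. The borderline boundary subcase $\alpha_1=\alpha_2$ at $k=i$ is the most delicate, as \lemref{lemma8} cannot be applied directly when one of its right-hand $A_h$-powers vanishes and the auxiliary estimate above must be invoked.
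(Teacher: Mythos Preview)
Your proposal is correct and rests on the same machinery as the paper: the telescoping identity together with \lemref{lemma5}, \lemref{lemma7}, \lemref{lemma8} and the beta-kernel sum \lemref{lemma9}. The paper, however, proves parts (i), (ii) and (iii) independently, repeating the telescoping argument three times with three different sets of inserted $A_h$-powers; your observation that (i) and (ii) follow from (iii) by taking $(\alpha_1,\alpha_2)=(0,\alpha/2)$ and $(\alpha/2,\alpha/2)$ respectively is an economy the paper does not exploit (the single-step case $i=m$ of (i), not covered by (iii), is immediate from \lemref{lemma8}). For the interior summands in (iii) you use the symmetric splitting $a_1=a_2=(1+\alpha_2)/2$, whereas the paper takes $a_1=\alpha_2+\epsilon$, $a_2=1-\epsilon$; both choices satisfy $a_1+a_2=1+\alpha_2$ and feed \lemref{lemma9} with the same outcome. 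Finally, you explicitly treat the borderline subcase $\alpha_1=\alpha_2$ at $k=i$ via the auxiliary bound on $A_h^{-\alpha_1}(E_i-S_i)$; the paper simply cites \lemref{lemma8} at that step without isolating the endpoint, so your version is slightly more careful there.
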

\begin{proof}
\begin{itemize}
\item[(i)]
Using the telescopic identity, we have
\begin{eqnarray}
\label{teles0}
&&\left(\prod_{j=i}^{m}e^{A_{h,j}(\omega)\Delta t}\right)P_hv-\left(\prod_{j=i}^{m}S_{h,\Delta t}^j(\omega)\right)P_hv\nonumber\\
&=&\sum_{k=1}^{m-i+1}\left(\prod_{j=i+k}^me^{A_{h,j}(\omega)\Delta t}\right)\left(e^{A_{h,i+k-1}(\omega)\Delta t}-S^{i+k-1}_{h,\Delta t}(\omega)\right)\left(\prod_{j=i}^{i+k-2}S^j_{h,\Delta t}(\omega)\right)P_hv.
\end{eqnarray}
Writing down the first  and the last terms of \eqref{teles0} explicitly, we obtain
\begin{eqnarray}
\label{teles1}
&&\left(\prod_{j=i}^{m}e^{A_{h,j}(\omega)\Delta t}\right)P_hv-\left(\prod_{j=i}^{m}S_{h,\Delta t}^j(\omega)\right)P_hv\nonumber\\
&=&\left(e^{A_{h,m}(\omega)\Delta t}-S^m_{h,\Delta t}(\omega)\right)\left(\prod_{j=i}^{m-1}S^j_{h,\Delta t}(\omega)\right)P_hv\nonumber\\
&+&\left(\prod_{j=i+1}^me^{A_{h,j}(\omega)\Delta t}\right)\left(e^{A_{h,i}(\omega)\Delta t}-S^i_{h,\Delta t}(\omega)\right)P_hv\nonumber\\
&+&\sum_{k=2}^{m-i}\left(\prod_{j=i+k}^me^{A_{h,j}(\omega)\Delta t}\right)\left(e^{A_{h,i+k-1}(\omega)\Delta t}-S^{i+k-1}_{h,\Delta t}(\omega)\right)\left(\prod_{j=i}^{i+k-2}S^j_{h,\Delta t}(\omega)\right)P_hv.
\end{eqnarray}
Taking the norm in both sides of \eqref{teles1}, inserting an appropriate power of $A_{h}$ and using  triangle inequality yields
\begin{eqnarray}
\label{eti1}
&&\left\Vert\left(\prod_{j=i}^{m}e^{A_{h,j}(\omega)\Delta t}\right)P_hv-\left(\prod_{j=i}^{m}S_{h,\Delta t}^j(\omega)\right)P_hv\right\Vert\nonumber\\
&\leq&\left\Vert\left(e^{A_{h,m}(\omega)\Delta t}-S^m_{h,\Delta t}(\omega)\right)A_h^{-\frac{\alpha}{2}}A_h^{\frac{\alpha}{2}}\left(\prod_{j=i}^{m-1}S^j_{h,\Delta t}(\omega)\right)A_h^{-\frac{\alpha}{2}}A_h^{\frac{\alpha}{2}}P_hv\right\Vert\nonumber\\
&+&\left\Vert\left(\prod_{j=i+1}^me^{A_{h,j}(\omega)\Delta t}\right)\left(e^{A_{h,i}(\omega)\Delta t}-S^i_{h,\Delta t}(\omega)\right)A_h^{-\frac{\alpha}{2}}A_h^{\frac{\alpha}{2}}P_hv\right\Vert\nonumber\\
&+&\sum_{k=2}^{m-i}\left\Vert\left(\prod_{j=i+k}^me^{A_{h,j}(\omega)\Delta t}\right)A_h^{1-\epsilon}A_h^{-1+\epsilon}\left(e^{A_{h,i+k-1}(\omega)\Delta t}-S^{i+k-1}_{h,\Delta t}(\omega)\right)A_h^{-\frac{\alpha}{2}-\epsilon}\right.\nonumber\\
&&.A_h^{\frac{\alpha}{2}+\epsilon}\left.\left(\prod_{j=i}^{i+k-2}S^j_{h,\Delta t}(\omega)\right)A_h^{-\frac{\alpha}{2}}A_h^{\frac{\alpha}{2}}P_hv\right\Vert\nonumber\\
&=:& I_1+I_2+I_3.
\end{eqnarray}
Using Lemmas \ref{lemma8}, \ref{lemma7} (ii) and \cite[Lemma 1]{Antjd1} yields 
\begin{eqnarray}
\label{eti2}
&&I_1\nonumber\\
&\leq&\left\Vert\left(e^{A_{h,m}(\omega)\Delta t}-S^m_{h,\Delta t}(\omega)\right)A_h^{-\frac{\alpha}{2}}\right\Vert_{L(H)}\left\Vert A_h^{\frac{\alpha}{2}}\left(\prod_{j=i}^{m-1}S^j_{h,\Delta t}(\omega)\right)A_h^{-\frac{\alpha}{2}}\right\Vert_{L(H)}\Vert A_h^{\frac{\alpha}{2}}P_hv\Vert\nonumber\\
&\leq& C\Delta t^{\frac{\alpha}{2}}\Vert v\Vert_{\alpha}.
\end{eqnarray}
Using  Lemmas \ref{lemma5}, \ref{lemma8} and \cite[Lemma 1]{Antjd1} yields 
\begin{eqnarray}
\label{eti3}
I_2&\leq&\left\Vert\left(\prod_{j=i+1}^me^{A_{h,j}(\omega)\Delta t}\right)\right\Vert_{L(H)}\left\Vert\left(e^{A_{h,i}(\omega)\Delta t}-S^i_{h,\Delta t}(\omega)\right)A_h^{-\frac{\alpha}{2}}\right\Vert_{L(H)}\Vert A_h^{\frac{\alpha}{2}}P_hv\Vert\nonumber\\
&\leq & C\Delta t^{\frac{\alpha}{2}}\Vert v\Vert_{\alpha}.
\end{eqnarray}
Using Lemmas \ref{lemma5}, \ref{lemma8},  \ref{lemma7} (ii), \ref{lemma9} and \cite[Lemma 1]{Antjd1} yields 
\begin{eqnarray}
\label{eti4}
I_3&\leq&\sum_{k=2}^{m-i}\left\Vert\left(\prod_{j=i+k}^me^{A_{h,j}(\omega)\Delta t}\right)A_h^{1-\epsilon}\right\Vert_{L(H)}\nonumber\\
&&\times\left\Vert A_h^{-1+\epsilon}\left(e^{A_{h,i+k-1}(\omega)\Delta t}-S^{i+k-1}_{h,\Delta t}(\omega)\right)A_h^{-\frac{\alpha}{2}-\epsilon}\right\Vert_{L(H)}\nonumber\\
&&\times\left\Vert A_h^{\frac{\alpha}{2}+\epsilon}\left(\prod_{j=i}^{i+k-2}S^j_{h,\Delta t}(\omega)\right)A_h^{-\frac{\alpha}{2}}\right\Vert_{L(H)}\Vert A_h^{\frac{\alpha}{2}}P_hv\Vert\nonumber\\
&\leq & C\sum_{k=2}^{m-i}t_{m+1-i-k}^{-1+\epsilon}\Delta t^{1+\frac{\alpha}{2}}t_{k-1}^{-\epsilon}= C\Delta t^{\frac{\alpha}{2}}\sum_{k=2}^{m-i}t_{m-i-k+1}^{-1+\epsilon}t_{k-1}^{-\epsilon}\Delta t\nonumber\\
&\leq & C\Delta t^{\frac{\alpha}{2}}.
\end{eqnarray}
Substituting  \eqref{eti4}, \eqref{eti3} and \eqref{eti2} in \eqref{eti1} yields
\begin{eqnarray*}
&&\left\Vert\left(\prod_{j=i}^{m}e^{A_{h,j}(\omega)\Delta t}\right)P_hv-\left(\prod_{j=i}^{m}S_{h,\Delta t}^j(\omega)\right)P_hv\right\Vert\leq C\Delta t^{\frac{\alpha}{2}}\Vert v\Vert_{\alpha}.
\end{eqnarray*}
This completes the proof of (i).
\item[(ii)] For non-smooth initial data,
taking the norm in both sides of \eqref{teles1} and inserting an appropriate power of $A_{h}$ yields
\begin{eqnarray}
\label{teles2}
&&\left\Vert \left(\prod_{j=i}^{m}e^{A_{h,j}(\omega)\Delta t}\right)P_hv-\left(\prod_{j=i}^{m}S_{h,\Delta t}^j(\omega)\right)P_hv\right\Vert\nonumber\\
&\leq& \left\Vert\left(e^{A_{h,m}(\omega)\Delta t}-S^m_{h,\Delta t}(\omega)\right)A_h^{-\frac{\alpha}{2}}\right\Vert_{L(H)}\left\Vert A_h^{\frac{\alpha}{2}}\left(\prod_{j=i}^{m-1}S^j_{h,\Delta t}(\omega)\right)P_hv\right\Vert\nonumber\\
&+&\left\Vert\left(\prod_{j=i+1}^me^{A_{h,j}(\omega)\Delta t}\right)A_h^{\frac{\alpha}{2}}\right\Vert_{L(H)}\left\Vert A_h^{-\frac{\alpha}{2}}\left(e^{A_{h,i}(\omega)\Delta t}-S^i_{h,\Delta t}(\omega)\right)P_hv\right\Vert\nonumber\\
&+&\sum_{k=2}^{m-i}\left\Vert\left(\prod_{j=i+k}^me^{A_{h,j}(\omega)\Delta t}\right)A_h^{1-\epsilon}\right\Vert_{L(H)}\left\Vert A_h^{-1+\epsilon}\left(e^{A_{h,i+k-1}(\omega)\Delta t}-S^{i+k-1}_{h,\Delta t}(\omega)\right)A_h^{-1+\epsilon}\right\Vert_{L(H)}\nonumber\\
&&\times\left\Vert A_h^{1-\epsilon}\left(\prod_{j=i}^{i+k-2}S^j_{h,\Delta t}(\omega)\right)P_hv\right\Vert.
\end{eqnarray}
Using Lemmas \ref{lemma8}, \ref{lemma7} (i), \ref{lemma9} and \ref{lemma5},  it follows that
\begin{eqnarray}
\label{teles3}
&&\left\Vert \left(\prod_{j=i}^{m}e^{A_{h,j}(\omega)\Delta t}\right)P_hv-\left(\prod_{j=i}^{m}S_{h,\Delta t}^j(\omega)\right)P_hv\right\Vert\nonumber\\
&\leq&C\Delta t^{\frac{\alpha}{2}}t_{m-i}^{-\frac{\alpha}{2}}\Vert v\Vert+C\Delta t^{\frac{\alpha}{2}}t_{m-i}^{-\frac{\alpha}{2}}\Vert v\Vert+C\Delta t^{1-\epsilon}\sum_{k=2}^{m-i}\Delta tt_{m-i-k+1}^{-1+\epsilon}t_{k-1}^{-1+\epsilon}\Vert v\Vert\nonumber\\
&\leq & C\Delta t^{\frac{\alpha}{2}}t_{m-i-k}^{-\frac{\alpha}{2}}\Vert v\Vert+C\Delta t^{\alpha/2}t_{m-i}^{-\frac{\alpha}{2}}\Vert v\Vert+ C\Delta t^{1-\epsilon}t_{m-i}^{-1+2\epsilon}\Vert v\Vert\nonumber\\
&\leq & C\Delta t^{\frac{\alpha}{2}} t_{m-i}^{-\frac{\alpha}{2}}\Vert v\Vert.
\end{eqnarray}
\item[(iii)] Taking the norm in both sides of \eqref{teles1} and inserting an appropriate power of $A_h$ yields
\begin{eqnarray}
\label{tala2}
&&\left\Vert \left[\left(\prod_{j=i}^{m}e^{A_{h,j}(\omega)\Delta t}\right)-\left(\prod_{j=i}^{m}S_{h,\Delta t}^j(\omega)\right)\right]A_h^{\alpha_1-\alpha_2}\right\Vert_{L(H)}\nonumber\\
&\leq& \left\Vert\left(e^{A_{h,m}(\omega)\Delta t}-S^m_{h,\Delta t}(\omega)\right)A_h^{-\alpha_2}\right\Vert_{L(H)}\left\Vert A_h^{\alpha_2}\left(\prod_{j=i}^{m-1}S^j_{h,\Delta t}(\omega)\right)A_h^{\alpha_1-\alpha_2}\right\Vert_{L(H)}\nonumber\\
&+&\left\Vert\left(\prod_{j=i+1}^me^{A_{h,j}(\omega)\Delta t}\right)A_h^{\alpha_1}\right\Vert_{L(H)}\left\Vert A_h^{-\alpha_1}\left(e^{A_{h,i}(\omega)\Delta t}-S^i_{h,\Delta t}(\omega)\right)A_h^{-(\alpha_2-\alpha_1)}\right\Vert_{L(H)}\nonumber\\
&+&\sum_{k=2}^{m-i}\left\Vert\left(\prod_{j=i+k}^me^{A_{h,j}(\omega)\Delta t}\right)A_h^{\alpha_2+\epsilon}\right\Vert_{L(H)}\nonumber\\
&&\times\left\Vert A_h^{-\alpha_2-\epsilon}\left(e^{A_{h,i+k-1}(\omega)\Delta t}-S^{i+k-1}_{h,\Delta t}(\omega)\right)A_h^{-1+\epsilon}\right\Vert_{L(H)}\nonumber\\
&&\times\left\Vert A_h^{1-\epsilon}\left(\prod_{j=i}^{i+k-2}S^j_{h,\Delta t}(\omega)\right)A_h^{-(\alpha_2-\alpha_1)}\right\Vert_{L(H)}.
\end{eqnarray}
Using Lemmas \ref{lemma8}, \ref{lemma7} (ii), \ref{lemma9} and \ref{lemma5},  it follows from \eqref{tala2} that
\begin{eqnarray*}
&&\left\Vert \left[\left(\prod_{j=i}^{m}e^{A_{h,j}(\omega)\Delta t}\right)-\left(\prod_{j=i}^{m}S_{h,\Delta t}^j(\omega)\right)\right]A_h^{\alpha_1-\alpha_2}\right\Vert_{L(H)}\nonumber\\
&\leq&C\Delta t^{\alpha_2}t_{m-i}^{-\alpha_1}+C\Delta t^{\alpha_2}t_{m-i}^{-\alpha_1}+C\Delta t^{\alpha_2}\sum_{k=2}^{m-i}\Delta tt_{m-i-k+1}^{-\alpha_2-\epsilon}t_{k-1}^{-1+\epsilon+\alpha_2-\alpha_1}\nonumber\\
&\leq & C\Delta t^{\alpha_2}t_{m-i-k}^{-\alpha_1}+C\Delta t^{\alpha_2}t_{m-i}^{-\alpha_1}+ C\Delta t^{\alpha_2}t_{m-i}^{-\alpha_1}\nonumber\\
&\leq& C\Delta t^{\alpha_2}t_{m-i}^{-\alpha_1}.
\end{eqnarray*}
This completes the proof of (iii).
\end{itemize}
\end{proof}
\begin{lemma}
\label{lemma11}
\begin{itemize}
\item[(i)] Let \assref{assumption6a} be fulfilled. Then the following estimate holds
\begin{eqnarray*}
\left\Vert (A_h)^{\frac{\beta-1}{2}}P_hQ^{\frac{1}{2}}\right\Vert_{\mathcal{L}_2(H)}\leq C_Q,
\end{eqnarray*}
where $\beta$ comes from \assref{assumption1}.
\item[(ii)] Under \assref{assumption6b}, for all $\omega\in \Omega$ and $m\in \mathbb{N}$, the following estimates hold
\begin{eqnarray*}
\left\Vert \left(G^h_m(\omega)\right)'(u) v\right\Vert&\leq& C\Vert v\Vert,\quad u,v\in H,\\
\left\Vert (A_h)^{\frac{-\eta}{2}}\left(G^h_m(\omega)\right)''(u)(v_1,v_2)\right\Vert&\leq& C\Vert v_1\Vert\Vert v_2\Vert,\quad u,v_1,v_2\in H,
\end{eqnarray*}
where $\eta$ comes from \assref{assumption6b} and  $C=C(C_F, T, L, \eta)$ is a positive constant independent of $h$, $\omega$, $m$, $M$ and $\Delta t$..
\end{itemize}
\end{lemma}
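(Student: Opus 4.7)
I will treat the two parts separately since they rely on different ingredients.

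For part (i), the plan is to transfer the bound on $A^{(\beta-1)/2}Q^{1/2}$ provided by \assref{assumption6a} to its discrete counterpart $A_h^{(\beta-1)/2}P_h Q^{1/2}$ by inserting a factor of $A^{-(\beta-1)/2}A^{(\beta-1)/2}$ between $P_h$ and $Q^{1/2}$. More precisely, I would write
\begin{eqnarray*}
A_h^{(\beta-1)/2}P_h Q^{1/2} \,=\, \bigl(A_h^{(\beta-1)/2}P_h A^{-(\beta-1)/2}\bigr)\cdot A^{(\beta-1)/2}Q^{1/2},
\end{eqnarray*}
and then use the standard finite element estimate
$\|A_h^{(\beta-1)/2}P_h A^{-(\beta-1)/2}\|_{L(H)}\leq C$ (uniformly in $h$, which can be justified via spectral interpolation between the known estimates $\|A_h^{-1/2}P_h A^{1/2}\|_{L(H)}\leq C$ and $\|P_h\|_{L(H)}\leq 1$, together with \propref{theorem1}). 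Multiplying a bounded linear operator times a Hilbert--Schmidt operator is Hilbert--Schmidt with the natural bound \eqref{trace1}, so
\begin{eqnarray*}
\|A_h^{(\beta-1)/2}P_h Q^{1/2}\|_{\mathcal{L}_2(H)} \,\leq\, \|A_h^{(\beta-1)/2}P_h A^{-(\beta-1)/2}\|_{L(H)}\,\|A^{(\beta-1)/2}Q^{1/2}\|_{\mathcal{L}_2(H)} \,\leq\, C,
\end{eqnarray*}
which is the desired inequality.

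For part (ii), the first step is to compute the derivatives of $G^h_m(\omega)$ from its explicit expression \eqref{remainder2}. Since $J^h_m(\omega)=P_hF'(X^h_m(\omega))$ depends on $\omega$ but not on the argument $u$, differentiation in $u$ gives
\begin{eqnarray*}
\bigl(G^h_m(\omega)\bigr)'(u)v \,=\, -P_hF'(u)v + J^h_m(\omega)v, \qquad \bigl(G^h_m(\omega)\bigr)''(u)(v_1,v_2) \,=\, -P_hF''(u)(v_1,v_2).
\end{eqnarray*}
For the first derivative, applying the bound in \assref{assumption6b} twice together with $\|P_h\|_{L(H)}\leq 1$ immediately yields
$\|(G^h_m(\omega))'(u)v\|\leq \|P_hF'(u)v\|+\|P_hF'(X^h_m(\omega))v\| \leq 2L\|v\|$. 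For the second derivative, the idea is to reduce the discrete norm to the continuous one by inserting $A^{\eta/2}A^{-\eta/2}$ and invoking the standard discrete--continuous inequality $\|A_h^{-\eta/2}P_h A^{\eta/2}\|_{L(H)}\leq C$ (a dual version of the estimate used in part (i), valid for $\eta\in(0,2)$), which gives
\begin{eqnarray*}
\|A_h^{-\eta/2}(G^h_m(\omega))''(u)(v_1,v_2)\| \,\leq\, \|A_h^{-\eta/2}P_h A^{\eta/2}\|_{L(H)}\, \|A^{-\eta/2}F''(u)(v_1,v_2)\| \,\leq\, C L\|v_1\|\|v_2\|,
\end{eqnarray*}
where the last step uses $\|F''(u)(v_1,v_2)\|_{-\eta}\leq L\|v_1\|\|v_2\|$ from \assref{assumption6b}.

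\textbf{Main obstacle.} The only nontrivial ingredient is the pair of discrete--continuous interpolation bounds $\|A_h^{s}P_h A^{-s}\|_{L(H)}\leq C$ for $s=(\beta-1)/2$ and $s=-\eta/2$, which must be established uniformly in $h$. These do not appear as a named lemma in the excerpt, but they are classical for the finite element discretization of a sectorial operator (cf.\ the smoothing properties in \propref{theorem1} and their discrete analogues); I would cite \cite{Larsson2,Antonio1} for the self-adjoint case and sketch the extension to the present non-self-adjoint setting via spectral interpolation between $s=0$ and $s=\pm 1/2$. Once these bounds are in hand, everything else is a one-line computation.
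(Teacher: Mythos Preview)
Your approach is correct and is precisely the standard argument. Note, however, that the paper does not actually give a proof of this lemma: it simply cites \cite[Lemma~11]{Antjd1} for part~(i) and \cite[Lemma~12]{Antjd1} for part~(ii). Your sketch therefore supplies more detail than the paper itself, and the route you take---inserting $A^{\mp(\beta-1)/2}$ or $A^{\pm\eta/2}$ and invoking the uniform transfer estimates $\|A_h^{s}P_hA^{-s}\|_{L(H)}\leq C$---is exactly what underlies the cited results.

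One small remark on your interpolation endpoints: the pair $\|A_h^{-1/2}P_hA^{1/2}\|_{L(H)}\leq C$ and $\|P_h\|_{L(H)}\leq 1$ only covers $s\in[-1/2,0]$, i.e.\ $\beta\leq 1$. For $\beta\in(1,2)$ you also need the endpoint $\|A_h^{1/2}P_hA^{-1/2}\|_{L(H)}\leq C$ to interpolate over $s\in[0,1/2]$; this is equally standard (it is part of what is collected as \cite[Lemma~1]{Antjd1}, invoked repeatedly elsewhere in the paper), so the fix is immediate.
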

\begin{proof}
The proof of (i) can be found in \cite[Lemma 11]{Antjd1} and the proof of (ii) can be found in \cite[Lemma 12]{Antjd1}.
\end{proof}

With the above preparation, we are now in position to prove our main results.
\subsection{Proof of \thmref{mainresult1}}
\label{mainproof}
Iterating  the numerical solution  \eqref{implicit1} at $t_m$ 
by replacing $X^h_i$, $i=m-1, \cdots 2, 1$ by its expression
 only on the first  term  yields
\begin{eqnarray}
\label{num1}
X^h_m&=&\left(\prod_{k=0}^{m-1}S^k_{h,\Delta t}\right)P_hX_0+\Delta tS^{m-1}_{h,\Delta t}G^h_{m-1}(X^h_{m-1})+S^{m-1}_{h,\Delta t}P_hB(X^h_{m-1})\Delta W_{m-1}\\
&+&\Delta t\sum_{i=1}^{m-1}\left(\prod_{j=m-i}^{m-1}S^j_{h,\Delta t}\right)G^h_{m-i-1}(X^h_{m-i-1})+\sum_{i=1}^{m-1}\left(\prod_{j=m-i}^{m-1}S^j_{h,\Delta t}\right)P_hB(X^h_{m-i-1})\Delta W_{m-i-1}.\nonumber
\end{eqnarray}
Iterating the mild solution \eqref{semi} at time $t_m$ yields
\begin{eqnarray}
\label{num2}
X^h(t_m)&=&\left(\prod_{k=0}^{m-1}e^{A_{h,k}\Delta t}\right)P_hX_0+\int_{t_{m-1}}^{t_m}e^{(t_m-s)A_{h,m-1}}G^h_{m-1}\left(X^h(s)\right)ds\nonumber\\
&+&\sum_{i=1}^{m-1}\int_{t_{m-i-1}}^{t_{m-i}}\left(\prod_{j=m-i}^{m-1}e^{\Delta tA_{h,j}}\right)e^{(t_{m-i}-s)A_{h,m-i-1}}G^h_{m-i-1}\left(X^h(s)\right)ds\nonumber\\
&+&\sum_{i=1}^{m-1}\int_{t_{m-i-1}}^{t_{m-i}}\left(\prod_{j=m-i}^{m-1}e^{\Delta tA_{h,_j}}\right)e^{(t_{m-i}-s)A_{h,m-i-1}}P_hB\left(X^h(s)\right)dW(s)\nonumber\\
&+&\int_{t_{m-1}}^{t_m}e^{(t_m-s)A_{h,m-1}}P_hB\left(X^h(s)\right)dW(s).
\end{eqnarray}

Subtracting \eqref{num2} from \eqref{num1}, taking the $L^2$ norm and using triangle inequality yields
\begin{eqnarray}
\label{num3}
\left\Vert X^h(t_m)-X^h_m\right\Vert^2_{L^2(\Omega, H)}\leq 25 \sum_{i=0}^4\Vert II_i\Vert^2_{L^2(\Omega,H)},
\end{eqnarray}
where
\begin{eqnarray*}
\label{num4}
II_0&=&\left(\prod_{j=0}^{m-1}e^{A_{h,j}\Delta t}\right)P_hX_0-\left(\prod_{j=0}^{m-1}S_{h,\Delta t}^j\right)P_hX_0,\\
II_1&=&\int_{t_{m-1}}^{t_m}\left(e^{(t_m-s)A_{h,m-1}}G^h_{m-1}\left(X^h(s)\right)-S^{m-1}_{h,\Delta t}G^h_{m-1}\left(X^h_{m-1}\right)\right)ds, \\
II_2&=&\int_{t_{m-1}}^{t_m}\left(e^{(t_m-s)A_{h,m-1}}P_hB\left(X^h(s)\right)-S^{m-1}_{h,\Delta t}P_hB\left(X^h_{m-1}\right)\right)dW(s),\nonumber\\
II_3&=& \sum_{i=1}^{m-1}\int_{t_{m-i-1}}^{t_{m-i}}\left(\prod_{j=m-i}^{m-1}e^{\Delta t\,A_{h,j}}\right)e^{(t_{m-i}-s)A_{h,m-i-1}}G^h_{m-i-1}\left(X^h(s)\right)ds\nonumber\\
&&-\sum_{i=1}^{m-1}\int_{t_{m-i-1}}^{t_{m-i}}\left(\prod_{j=m-i}^{m-1}S^j_{h,\Delta t}\right)G^h_{m-i-1}\left(X^h_{m-i-1}\right)ds,\\
II_4&=&\sum_{i=1}^{m-1}\int_{t_{m-i-1}}^{t_{m-i}}\left(\prod_{j=m-i}^{m-1}e^{\Delta t\,A_{h,j}}\right)e^{(t_{m-i}-s)A_{h,m-i-1}}P_hB\left(X^h(s)\right)dW(s)\nonumber\\
&&-\sum_{i=1}^{m-1}\int_{t_{m-i-1}}^{t_{m-i}}\left(\prod_{j=m-i}^{m-1}S^j_{h,\Delta t}\right)P_hB\left(X^h_{m-i-1}\right)dW(s).
\end{eqnarray*}
In the following sections we estimate $II_i$, $i=0,\cdots,4$ separately.

\subsubsection{Estimation of $II_0$, $II_1$ and $II_2$}
Using \lemref{lemma10} (i) with $\alpha=\beta$, it holds that
\begin{eqnarray}
\label{deb1}
\Vert II_0\Vert_{L^2(\Omega,H)}&\leq&\left(\mathbb{E}\left[\left\Vert \left(\prod_{j=0}^{m-1}e^{A_{h,j}\Delta t}\right)P_hX_0-\left(\prod_{j=0}^{m-1}S_{h,\Delta t}^j\right)P_hX_0\right\Vert^2\right]\right)^{\frac{1}{2}}\nonumber\\
&\leq& C\Delta t^{\frac{\beta}{2}}\left(\left[\mathbb{E}\Vert X_0\Vert^2_{\beta}\right]\right)^{\frac{1}{2}}\leq C\Delta t^{\frac{\beta}{2}}.
\end{eqnarray}
The term $II_1$ can be recast  in three terms as follows:
\begin{eqnarray}
\label{mar1}
II_1&=&\int_{t_{m-1}}^{t_m}e^{(t_m-s)A_{h,m-1}}\left(G^h_{m-1}\left(X^h(s)\right)-G^h_{m-1}\left(X^h(t_{m-1})\right)\right)ds\nonumber\\
&+&\int_{t_{m-1}}^{t_m}\left(e^{(t_m-s)A_{h,m-1}}-S^{m-1}_{h,\Delta t}\right)G^h_{m-1}\left(X^h(t_{m-1})\right) ds\nonumber\\
&+&\int_{t_{m-1}}^{t_m}S^{m-1}_{h,\Delta t}\left(G^h_{m-1}\left(X^h(t_{m-1})\right)-G^h_{m-1}\left(X^h_{m-1}\right)\right)ds\nonumber\\
&:=&II_{11}+II_{12}+II_{13}.
\end{eqnarray}
Therefore using triangle inequality we obtain
\begin{eqnarray}
\label{mar2}
\Vert II_1\Vert_{L^2(\Omega,H)}\leq \Vert II_{11}\Vert_{L^2(\Omega,H)}+\Vert II_{12}\Vert_{L^2(\Omega,H)}+\Vert II_{13}\Vert_{L^2(\Omega,H)}.
\end{eqnarray}
Using \coref{corollary1} yields
\begin{eqnarray}
\label{mar3}
\Vert II_{11}\Vert_{L^2(\Omega,H)}&\leq& C\int_{t_{m-1}}^{t_m}\left\Vert G^h_{m-1}\left(X^h(s)\right)\right\Vert_{L^2(\Omega,H)}ds+\int_{t_{m-1}}^{t_m}\left\Vert G^h_{m-1}\left(X^h(t_{m-1})\right)\right\Vert_{L^2(\Omega,H)}ds\nonumber\\
&\leq& C\int_{t_{m-1}}^{t_m}\left(1+\Vert X_0\Vert_{L^2(\Omega,H)}\right)ds\leq C\Delta t.
\end{eqnarray}
Using \lemref{lemma10} (i) with $\alpha=0$ and \coref{corollary1}, it holds that
\begin{eqnarray}
\label{mar4}
\Vert II_{12}\Vert_{L^2(\Omega,H)}&\leq &C\int_{t_{m-1}}^{t_m}\left\Vert G^h_{m-1}\left(X^h(t_{m-1})\right)\right\Vert_{L^2(\Omega,H)}ds\leq C\int_{t_{m-1}}^{t_m}\left(1+\Vert X_0\Vert_{L^2(\Omega,H)}\right)ds\nonumber\\
&\leq& C\Delta t.
\end{eqnarray}
Using \lemref{lemma7} (i) with $\alpha=0$ and \assref{assumption3}, it holds that
\begin{eqnarray}
\label{mar5}
\Vert II_{13}\Vert_{L^2(\Omega,H)}\leq C\Delta t\left\Vert X^h(t_{m-1})-X^h_{m-1}\right\Vert_{L^2(\Omega,H)}.
\end{eqnarray}
Substituting \eqref{mar5}, \eqref{mar4} and \eqref{mar3} in \eqref{mar2} yields 
\begin{eqnarray}
\label{mar6}
\Vert II_1\Vert_{L^2(\Omega,H)}\leq C\Delta t+C\Delta t\left\Vert X^h(t_{m-1})-X^h_{m-1}\right\Vert_{L^2(\Omega,H)}.
\end{eqnarray}
We  can  recast $II_2$ as follows:
\begin{eqnarray}
\label{isen1}
II_2&=&\int_{t_{m-1}}^{t_m}e^{(t_m-s)A_{h,m-1}}\left(P_hB\left(X^h(s)\right)-P_hB\left(X^h(t_{m-1})\right)\right)dW(s)\nonumber\\
&+&\int_{t_{m-1}}^{t_m}\left(e^{(t_m-s)A_{h,m-1}}-S^{m-1}_{h,\Delta t}\right)P_hB\left(X^h(t_{m-1})\right) dW(s)\nonumber\\
&+&\int_{t_{m-1}}^{t_m}S^{m-1}_{h,\Delta t}\left(P_hB\left(X^h(t_{m-1})\right)-P_hB\left(X^h_{m-1}\right)\right)dW(s)\nonumber\\
&:=&II_{21}+II_{22}+II_{23}.
\end{eqnarray}
Therefore using triangle inequality we obtain
\begin{eqnarray}
\label{isen2}
\Vert II_2\Vert^2_{L^2(\Omega,H)}\leq 9\Vert II_{21}\Vert^2_{L^2(\Omega,H)}+9\Vert II_{22}\Vert^2_{L^2(\Omega,H)}+9\Vert II_{23}\Vert^2_{L^2(\Omega,H)}.
\end{eqnarray}
Using  It\^{o}-isometry, \cite[Lemma 5]{Antjd1}, \assref{assumption4} and \lemref{perturbedsemi}, it holds that
\begin{eqnarray}
\label{isen3}
\Vert II_{21}\Vert^2_{L^2(\Omega,H)}&=&\int_{t_{m-1}}^{t_m}\left\Vert e^{(t_m-s)A_{h,m-1}}\left(P_hB\left(X^h(s)\right)-P_hB\left(X^h(t_{m-1})\right)\right)\right\Vert^2_{L^2(\Omega,H)}ds\nonumber\\
&\leq& C\int_{t_{m-1}}^{t_m}(s-t_{m-1})^{\min(\beta,1)}ds\leq C\Delta t^{\min(\beta+1,2)}.
\end{eqnarray}
Using again  It\^{o}-isometry, \lemref{lemma10} (i) with $\alpha=0$ and \corref{corollary1} yields
\begin{eqnarray}
\label{isen4}
\Vert II_{22}\Vert^2_{L^2(\Omega,H)}&=&  \int_{t_{m-1}}^{t_m}\left\Vert\left(e^{(t_m-s)A_{h,m-1}}-S^{m-1}_{h,\Delta t}\right)P_hB\left(X^h(t_{m-1})\right)\right\Vert^2_{L^2(\Omega,H)}ds\nonumber\\
&\leq& C\int_{t_{m-1}}^{t_m}\left(1+\Vert X_0\Vert^2_{L^2(\Omega,H)}\right)ds\leq C\Delta t.
\end{eqnarray}
The It\^{o}-isometry together with \lemref{lemma7} (i) (with $\alpha=0$) and \assref{assumption4} yields
\begin{eqnarray}
\label{isen5}
\Vert II_{23}\Vert^2_{L^2(\Omega,H)}&=& \int_{t_{m-1}}^{t_m}\left\Vert S^{m-1}_{h,\Delta t}\left(P_hB\left(X^h(t_{m-1})\right)-P_hB\left(X^h_{m-1}\right)\right)\right\Vert^2_{L^2(\Omega,H)}ds\nonumber\\
&\leq & C\Delta t\left\Vert X^h(t_{m-1})-X^h_{m-1}\right\Vert^2_{L^2(\Omega,H)}.
\end{eqnarray}
Substituting \eqref{isen5}, \eqref{isen4} and \eqref{isen3} in \eqref{isen2} yields 
\begin{eqnarray}
\label{isen6}
\Vert II_2\Vert^2_{L^2(\Omega,H)}\leq C\Delta t+C\Delta t\left\Vert X^h(t_{m-1})-X^h_{m-1}\right\Vert^2_{L^2(\Omega,H)}.
\end{eqnarray}

\subsubsection{Estimation of $II_3$}
We can recast $II_3$ in four terms as follows:
\begin{eqnarray*}
II_3&=&\sum_{i=1}^{m-1}\int_{t_{m-i-1}}^{t_{m-i}}\left(\prod_{j=m-i}^{m-1}e^{\Delta tA_{h,j}}\right)\left(e^{(t_{m-i}-s)A_{h,m-i-1}}-\mathbf{I}\right) G^h_{m-i-1}\left(X^h(s)\right)ds\nonumber\\
&+&\sum_{i=1}^{m-1}\int_{t_{m-i-1}}^{t_{m-i}}\left(\prod_{j=m-i}^{m-1}e^{\Delta tA_{h,j}}\right)\left(G^h_{m-i-1}\left(X^h(s)\right)-G^h_{m-i-1}\left(X^h(t_{m-i-1})\right)\right)ds\nonumber\\
&+&\sum_{i=1}^{m-1}\int_{t_{m-i-1}}^{t_{m-i}}\left[\left(\prod_{j=m-i}^{m-1}e^{\Delta tA_{h,j}}\right)-\left(\prod_{j=m-i}^{m-1}S^j_{h,\Delta t}\right)\right]G^h_{m-i-1}\left( X^h(t_{m-i-1})\right)ds\nonumber\\
&+&\sum_{i=1}^{m-1}\int_{t_{m-i-1}}^{t_{m-i}}\left(\prod_{j=m-i}^{m-1}S^j_{h,\Delta t}\right)\left(G^h_{m-i-1}\left(X^h(t_{m-i})\right)-G^h_{m-i-1}\left(X^h_{m-i-1}\right)\right)ds\nonumber\\
&:=& II_{31}+II_{32}+II_{33}+II_{34}.
\end{eqnarray*}
Therefore, using triangle inequality we obtain
\begin{eqnarray}
\label{cont0}
\Vert II_3\Vert_{L^2(\Omega,H)}\leq \Vert II_{31}\Vert_{L^2(\Omega,H)}+\Vert II_{32}\Vert_{L^2(\Omega,H)}+\Vert II_{33}\Vert_{L^2(\Omega,H)}+\Vert II_{34}\Vert_{L^2(\Omega,H)}.
\end{eqnarray}
Inserting an appropriate power of $A_{h}$, using  \lemref{lemma5}  and \coref{corollary1} yields 
\begin{eqnarray}
\label{cont1}
\Vert II_{31}\Vert_{L^2(\Omega,H)}&\leq& \sum_{i=1}^{m-1}\int_{t_{m-i-1}}^{t_{m-i}}\mathbb{E}\left[\left\Vert\left(\prod_{j=m-i}^{m-1}e^{\Delta tA_{h,j}}\right)A_h^{1-\epsilon}\right\Vert^2_{L(H)}\right.\nonumber\\
&& \left.\times \left\Vert A_h^{-1+\epsilon}\left(e^{(t_{m-i}-s)A_{h,m-i-1}}-\mathbf{I}\right)\right\Vert^2_{L(H)}\left\Vert G^h_{m-i-1}\left(X^h(s)\right)\right\Vert^2ds\right]^{\frac{1}{2}}\nonumber\\
&\leq& C\sum_{i=1}^{m-1}\int_{t_{m-i-1}}^{t_{m-i}}t_i^{-1+\epsilon}(t_{m-i}-s)^{1-\epsilon}ds\nonumber\\
&\leq& C\Delta t^{1-\epsilon}\sum_{i=1}^{m-1}\Delta t \,\,t_{i}^{-1+\epsilon}\leq C\Delta t^{1-\epsilon}.
\end{eqnarray}
Using triangle inequality, \lemref{lemma5}, \assref{assumption3} and \lemref{regularity} yields 
\begin{eqnarray}
\label{cont2}
&&\Vert II_{32}\Vert_{L^2(\Omega,H)}\nonumber\\
&\leq &\sum_{i=1}^{m-1}\int_{t_{m-i-1}}^{t_{m-i}}\mathbb{E}\left[\left\Vert\left(\prod_{j=m-i}^{m-1}e^{\Delta tA_{h,j}}\right)\right\Vert^2_{L(H)} \Vert G^h_{m-i-1}\left(X^h(s)\right)-G^h_{m-i-1}\left(X^h(t_{m-i-1})\right)\Vert^2\right]^{\frac{1}{2}}ds\nonumber\\
&\leq& C\sum_{i=1}^{m-1}\int_{t_{m-i-1}}^{t_{m-i}}\left\Vert X^h(s)-X^h(t_{m-i-1})\right\Vert_{L^2(\Omega,H)}ds\nonumber\\
&\leq& C\sum_{i=1}^{m-1}\int_{t_{m-i-1}}^{t_{m-i}}(s-t_{m-i-1})^{\frac{\min(\beta,1)}{2}}ds\leq C\Delta t^{\frac{\min(\beta,1)}{2}}.
\end{eqnarray}
Using triangle inequality, \lemref{lemma10} (ii) and \coref{corollary1}, it holds that
\begin{eqnarray}
\label{cont3}
\Vert II_{33}\Vert_{L^2(\Omega,H)}&\leq& \sum_{i=1}^{m-1}\int_{t_{m-i-1}}^{t_{m-i}}\mathbb{E}\left[\left\Vert\left(\prod_{j=m-i}^{m-1}e^{\Delta tA_{h,j}}\right)-\left(\prod_{j=m-i}^{m-1}S^j_{h,\Delta t}\right)\right\Vert^2_{L(H)}\right.\nonumber\\
&&\left.\times \left\Vert G^h_{m-i}\left(X^h(t_{m-i})\right)\right\Vert^2\right]^{\frac{1}{2}}ds\nonumber\\
&\leq& C\Delta t^{\frac{\beta}{2}}+ C\sum_{i=2}^{m-1}\int_{t_{m-i-1}}^{t_{m-i}}t_{i-1}^{-\frac{\beta}{2}}\Delta t^{\frac{\beta}{2}}ds\leq C\Delta t^{\frac{\beta}{2}}.
\end{eqnarray}

Using \lemref{lemma7} (i) with $\alpha=0$ and \assref{assumption3} yields 
\begin{eqnarray}
\label{cont4}
\Vert II_{34}\Vert_{L^2(\Omega,H)}
&\leq& \sum_{i=1}^{m-1}\int_{t_{m-i-1}}^{t_{m-i}}\mathbb{E}\left[\left\Vert\left(\prod_{j=m-i}^{m-1}S^j_{h,\Delta t}\right)\right\Vert^2_{L(H)}\right.\nonumber\\
&\times&\left.\left\Vert\left(G^h_{m-i-1}\left(X^h(t_{m-i-1})\right)-G^h_{m-i-1}\left(X^h_{m-i-1}\right)\right)\right\Vert^2_{L^2(\Omega,H)}\right]^{\frac{1}{2}}ds\nonumber\\
&\leq& C\Delta t\sum_{i=1}^{m-1}\left\Vert X^h(t_{m-i-1})-X^h_{m-i-1}\right\Vert_{L^2(\Omega,H)}.
\end{eqnarray}

Substituting \eqref{cont4}, \eqref{cont3}, \eqref{cont2} and \eqref{cont1} in \eqref{cont0} yields 
\begin{eqnarray}
\label{cont5}
\Vert II_3\Vert_{L^2(\Omega,H)}\leq C\Delta t^{\frac{\min(\beta,1)}{2}}+C\Delta t\sum_{i=1}^{m-1}\left\Vert X^h(t_{m-i-1})-X^h_{m-i-1}\right\Vert_{L^2(\Omega,H)}.
\end{eqnarray}

\subsubsection{Estimation of $II_4$}
We can recast $II_4$  in four terms as follows.
\begin{eqnarray}
\label{pa1}
II_4&=&\sum_{i=1}^{m-1}\int_{t_{m-i-1}}^{t_{m-i}}\left(\prod_{j=m-i}^{m-1}e^{\Delta tA_{h,j}}\right)\left(e^{(t_{m-i}-s)A_{h,m-i-1}}-\mathbf{I}\right) P_hB\left(X^h(s)\right)dW(s)\nonumber\\
&+&\sum_{i=1}^{m-1}\int_{t_{m-i-1}}^{t_{m-i}}\left(\prod_{j=m-i}^{m-1}e^{\Delta tA_{h,j}}\right)\left(P_hB\left(X^h(s)\right)-P_hB\left(X^h(t_{m-i-1})\right)\right)dW(s)\nonumber\\
&+&\sum_{i=1}^{m-1}\int_{t_{m-i-1}}^{t_{m-i}}\left[\left(\prod_{j=m-i}^{m-1}e^{\Delta tA_{h,j}}\right)-\left(\prod_{j=m-i}^{m-1}S^j_{h,\Delta t}\right)\right]P_hB\left(X^h(t_{m-i-1})\right)dW(s)\nonumber\\
&+&\sum_{i=1}^{m-1}\int_{t_{m-i-1}}^{t_{m-i}}\left(\prod_{j=m-i}^{m-1}S^j_{h,\Delta t}\right)\left[P_hB\left(X^h(t_{m-i-1})\right)-P_hB\left(X^h_{m-i-1}\right)\right]dW(s)\nonumber\\
&:=& II_{41}+II_{42}+II_{43}+II_{44}.
\end{eqnarray}
Therefore using triangle inequality we have 
\begin{eqnarray}
\label{pa2a}
\Vert II_4\Vert^2_{L^2(\Omega,H)}\leq 16\Vert II_{41}\Vert^2_{L^2(\Omega,H)}+16\Vert II_{42}\Vert^2_{L^2(\Omega,H)}+16\Vert II_{43}\Vert^2_{L^2(\Omega,H)}+16\Vert II_{44}\Vert^2_{L^2(\Omega,H)}.
\end{eqnarray}
Since the expectation of the cross-product vanishes, inserting an appropriate power of $A_h$, using Cauchy-Schwartz inequality, it follows that
\begin{eqnarray*}
&&\Vert II_{41}\Vert^2_{L^2(\Omega,H)}\nonumber\\
&=&\mathbb{E}\left\Vert\sum_{i=1}^{m-1}\int_{t_{m-i-1}}^{t_{m-i}}\left(\prod_{j=m-i}^{m-1}e^{\Delta tA_{h,j}}\right)\left(e^{(t_{m-i}-s)A_{h,m-i-1}}-\mathbf{I}\right) P_hB\left(X^h(s)\right)dW(s)\right\Vert^2\nonumber\\
&=&\sum_{i=1}^{m-1}\mathbb{E}\left\Vert \left(\prod_{j=m-i}^{m-1}e^{\Delta tA_{h,j}}\right) \int_{t_{m-i-1}}^{t_{m-i}}\left(e^{(t_{m-i}-s)A_{h,m-i-1}}-\mathbf{I}\right) P_hB\left(X^h(s)\right)dW(s)\right\Vert^2\nonumber\\
&\leq&\sum_{i=1}^{m-1}\left(\mathbb{E}\left\Vert \left(\prod_{j=m-i}^{m-1}e^{\Delta tA_{h,j}}\right)A_h^{\frac{1-\epsilon}{2}}\right\Vert^4_{L(H)}\right)^{\frac{1}{2}}\nonumber\\
&\times&\left(\mathbb{E}\left\Vert\int_{t_{m-i-1}}^{t_{m-i}}A_h^{\frac{-1+\epsilon}{2}}\left(e^{(t_{m-i}-s)A_{h,m-i-1}}-\mathbf{I}\right) P_hB\left(X^h(s)\right)dW(s)\right\Vert^4\right)^{\frac{1}{2}}.
\end{eqnarray*}
Using  the Burkh\"{o}lder-Davis-Gundy inequality (\cite[Lemma 5.1]{Raphael}),  using  \lemref{lemma5}  and \coref{corollary1} yields 
\begin{eqnarray}
\label{pa3}
&&\Vert II_{41}\Vert^2_{L^2(\Omega,H)}\nonumber\\
&\leq& \sum_{i=1}^{m-1}\left(\mathbb{E}\left\Vert\left(\prod_{j=m-i}^{m-1}e^{\Delta tA_{h,j}}\right)A_h^{\frac{1-\epsilon}{2}}\right\Vert^4_{L(H)}\right)^{\frac{1}{2}}\nonumber\\
&\times& \int_{t_{m-i-1}}^{t_{m-i}}\mathbb{E} \left\Vert A_h^{\frac{-1+\epsilon}{2}}\left(e^{(t_{m-i}-s)A_{h,m-i-1}}-\mathbf{I}\right) P_hB\left(X^h(s)\right)\right\Vert^2_{L^0_2}ds\nonumber\\
&\leq& \sum_{i=1}^{m-1}\left(\mathbb{E}\left\Vert\left(\prod_{j=m-i}^{m-1}e^{\Delta tA_{h,j}}\right)A_h^{\frac{1-\epsilon}{2}}\right\Vert^4_{L(H)}\right)^{\frac{1}{2}}\nonumber\\
&\times& \int_{t_{m-i-1}}^{t_{m-i}}\mathbb{E}\left[ \left\Vert A_h^{\frac{-1+\epsilon}{2}}\left(e^{(t_{m-i}-s)A_{h,m-i-1}}-\mathbf{I}\right)\right\Vert^2_{L(H)}\left\Vert P_hB\left(X^h(s)\right)\right\Vert^2_{L^0_2}\right]ds\nonumber\\
&\leq& C\sum_{i=1}^{m-1}\int_{t_{m-i-1}}^{t_{m-i}}t_i^{-1+\epsilon}(t_{m-i}-s)^{1-\epsilon}ds\leq C\Delta t^{1-\epsilon}\sum_{i=1}^{m-1}\Delta t \,\,t_{i}^{-1+\epsilon}\leq C\Delta t^{1-\epsilon}.
\end{eqnarray}
Since the expectation of the cross-product vanishes, using Cauchy-Schwartz's inequality, it follows that
\begin{eqnarray*}
&&\Vert II_{42}\Vert^2_{L^2(\Omega,H)}\nonumber\\
&=&\mathbb{E}\left\Vert\sum_{i=1}^{m-1}\int_{t_{m-i-1}}^{t_{m-i}}\left(\prod_{j=m-i}^{m-1}e^{\Delta tA_{h,j}}\right)\left(P_hB\left(X^h(s)\right)-P_hB\left(X^h(t_{m-i-1})\right)\right)dW(s)\right\Vert^2\nonumber\\
&=&\sum_{i=1}^{m-1}\mathbb{E}\left\Vert\left(\prod_{j=m-i}^{m-1}e^{\Delta tA_{h,j}}\right)\int_{t_{m-i-1}}^{t_{m-i}}\left(P_hB\left(X^h(s)\right)-P_hB\left(X^h(t_{m-i-1})\right)\right)dW(s)\right\Vert^2\nonumber\\
&\leq&\sum_{i=1}^{m-1}\left(\mathbb{E}\left\Vert\left(\prod_{j=m-i}^{m-1}e^{\Delta tA_{h,j}}\right)\right\Vert^4_{L(H)}\right)^{\frac{1}{2}}\nonumber\\
&\times&\left(\mathbb{E}\left\Vert\int_{t_{m-i-1}}^{t_{m-i}}\left(P_hB\left(X^h(s)\right)-P_hB\left(X^h(t_{m-i-1})\right)\right)dW(s)\right\Vert^4\right)^{\frac{1}{2}}.
\end{eqnarray*}
Using  the Burkh\"{o}lder-Davis-Gundy inequality (\cite[Lemma 5.1]{Raphael}), \lemref{lemma5}, \assref{assumption4} and \lemref{regularity} yields 
\begin{eqnarray}
\label{pa4}
&&\Vert II_{42}\Vert^2_{L^2(\Omega,H)}\nonumber\\
&\leq&\sum_{i=1}^{m-1}\left(\mathbb{E}\left\Vert\left(\prod_{j=m-i}^{m-1}e^{\Delta tA_{h,j}}\right)\right\Vert^4_{L(H)}\right)^{\frac{1}{2}}\nonumber\\
&\times&\int_{t_{m-i-1}}^{t_{m-i}}\mathbb{E}\left\Vert\left(P_hB\left(X^h(s)\right)-P_hB\left(X^h(t_{m-i-1})\right)\right)\right\Vert^2_{L^0_2}ds\nonumber\\
&\leq& C\sum_{i=1}^{m-1}\int_{t_{m-i-1}}^{t_{m-i}}\left\Vert X^h(s)-X^h(t_{m-i-1})\right\Vert^2_{L^2(\Omega,H)}ds\nonumber\\
&\leq& C\sum_{i=1}^{m-1}\int_{t_{m-i-1}}^{t_{m-i}}(s-t_{m-i-1})^{\min(\beta,1)}ds\leq C\Delta t^{\min(\beta,1)}.
\end{eqnarray}
Since the expectation of the cross-product vanishes, using Cauchy-Schwartz inequality yields
\begin{eqnarray*}
&&\Vert II_{43}\Vert^2_{L^2(\Omega,H)}\nonumber\\
&=&\mathbb{E}\left\Vert\sum_{i=1}^{m-1}\int_{t_{m-i-1}}^{t_{m-i}}\left[\left(\prod_{j=m-i}^{m-1}e^{\Delta tA_{h,j}}\right)-\left(\prod_{j=m-i}^{m-1}S^j_{h,\Delta t}\right)\right]P_hB\left(X^h(t_{m-i-1})\right)dW(s)\right\Vert^2\nonumber\\
&=&\sum_{i=1}^{m-1}\mathbb{E}\left\Vert \left[\left(\prod_{j=m-i}^{m-1}e^{\Delta tA_{h,j}}\right)-\left(\prod_{j=m-i}^{m-1}S^j_{h,\Delta t}\right)\right]\int_{t_{m-i-1}}^{t_{m-i}}P_hB\left(X^h(t_{m-i-1})\right)dW(s)\right\Vert^2\nonumber\\
&\leq&\sum_{i=1}^{m-1}\left(\mathbb{E}\left\Vert \left[\left(\prod_{j=m-i}^{m-1}e^{\Delta tA_{h,j}}\right)-\left(\prod_{j=m-i}^{m-1}S^j_{h,\Delta t}\right)\right]\right\Vert^4_{L(H)}\right)^{\frac{1}{2}}\nonumber\\
&\times&\left(\mathbb{E}\left\Vert\int_{t_{m-i-1}}^{t_{m-i}}P_hB\left(X^h(t_{m-i-1})\right)dW(s)\right\Vert^4\right)^{\frac{1}{2}}.
\end{eqnarray*}
Using  the Burkh\"{o}lder-Davis-Gundy-inequality (\cite[Lemma 5.1]{Raphael}), \lemref{lemma10} (ii) with $\alpha=\frac{1-\epsilon}{2}$ and \coref{corollary1}, it holds that
\begin{eqnarray}
\label{pa5}
\Vert II_{43}\Vert^2_{L^2(\Omega,H)}
&\leq&\sum_{i=1}^{m-1}\left(\mathbb{E}\left\Vert \left[\left(\prod_{j=m-i}^{m-1}e^{\Delta tA_{h,j}}\right)-\left(\prod_{j=m-i}^{m-1}S^j_{h,\Delta t}\right)\right]\right\Vert^4_{L(H)}\right)^{\frac{1}{2}}\nonumber\\
&\times&\int_{t_{m-i-1}}^{t_{m-i}}\mathbb{E}\left\Vert P_hB\left(X^h(t_{m-i-1})\right)\right\Vert^2_{L^0_2}ds\nonumber\\
&\leq& C\Delta t+C\sum_{i=2}^{m-1}\int_{t_{m-i-1}}^{t_{m-i}}t_{i-1}^{1-\epsilon}\Delta t^{1-\epsilon}ds\leq C\Delta t^{1-\epsilon}.
\end{eqnarray}
Since the expectation of the cross-product vanishes, using Cauchy-Schwartz inequality yields
\begin{eqnarray*}
&&\Vert II_{44}\Vert^2_{L^2(\Omega,H)}\nonumber\\
&=&\mathbb{E}\left\Vert\sum_{i=1}^{m-1}\int_{t_{m-i-1}}^{t_{m-i}}\left(\prod_{j=m-i}^{m-1}S^j_{h,\Delta t}\right)\left(P_hB\left(X^h(t_{m-i-1})\right)-P_hB\left(X^h_{m-i-1}\right)\right)dW(s)\right\Vert^2\nonumber\\
&=&\sum_{i=1}^{m-1}\mathbb{E}\left\Vert\left(\prod_{j=m-i}^{m-1}S^j_{h,\Delta t}\right)\int_{t_{m-i-1}}^{t_{m-i}}\left(P_hB\left(X^h(t_{m-i-1})\right)-P_hB\left(X^h_{m-i-1}\right)\right)dW(s)\right\Vert^2\nonumber\\
&\leq&\sum_{i=1}^{m-1}\left(\mathbb{E}\left\Vert\left(\prod_{j=m-i}^{m-1}S^j_{h,\Delta t}\right)\right\Vert^4_{L(H)}\right)^{\frac{1}{2}}\nonumber\\
&\times&\left(\mathbb{E}\left\Vert\int_{t_{m-i-1}}^{t_{m-i}}\left(P_hB\left(X^h(t_{m-i-1})\right)-P_hB\left(X^h_{m-i-1}\right)\right)dW(s)\right\Vert^4\right)^{\frac{1}{2}}.
\end{eqnarray*}
Using  the Burkh\"{o}lder-Davis-Gundy inequality (\cite[Lemma 5.1]{Raphael}),  \lemref{lemma7} (i) with $\alpha=0$ and \assref{assumption4} yields 
\begin{eqnarray}
\label{pa6}
\Vert II_{44}\Vert^2_{L^2(\Omega,H)}
&\leq&\sum_{i=1}^{m-1}\left(\mathbb{E}\left\Vert\left(\prod_{j=m-i}^{m-1}S^j_{h,\Delta t}\right)\right\Vert^4_{L(H)}\right)^{\frac{1}{2}}\nonumber\\
&\times&\int_{t_{m-i-1}}^{t_{m-i}}\mathbb{E}\left\Vert\left(P_hB\left(X^h(t_{m-i-1})\right)-P_hB\left(X^h_{m-i-1}\right)\right)\right\Vert^2_{L^0_2}ds\nonumber\\
&\leq& C\Delta t\sum_{i=1}^{m-1}\left\Vert X^h(t_{m-i-1})-X^h_{m-i-1}\right\Vert^2_{L^2(\Omega,H)}.
\end{eqnarray}
Substituting \eqref{pa6}, \eqref{pa5}, \eqref{pa4} and \eqref{pa3} in \eqref{pa2a} yields
\begin{eqnarray}
\label{pa7}
\Vert II_4\Vert^2_{L^2(\Omega,H)}\leq C\Delta t^{\min(\beta,1-\epsilon)}+C\Delta t\sum_{i=1}^{m-1}\Vert X^h(t_{m-i-1})-X^h_{m-i-1}\Vert^2_{L^2(\Omega,H)}.
\end{eqnarray}
Substituting \eqref{pa7}, \eqref{cont5}, \eqref{isen6}, \eqref{mar6} and \eqref{deb1} in \eqref{num3} yields 
\begin{eqnarray}
\label{pa8}
\left\Vert X^h(t_m)-X^h_m\right\Vert^2_{L^2(\Omega,H)}\leq C\Delta t^{\min(\beta,1-\epsilon)}+C\Delta t\sum_{i=0}^{m-1}\left\Vert X^h(t_i)-X^h_i\right\Vert^2_{L^2(\Omega,H)}.
\end{eqnarray}
Applying the discrete Gronwall lemma to \eqref{pa8} yields
\begin{eqnarray*}
\left\Vert X^h(t_m)-X^h_m\right\Vert_{L^2(\Omega,H)}\leq C\Delta t^{\frac{\min(\beta,1-\epsilon)}{2}}.
\end{eqnarray*}
This completes the proof of \thmref{mainresult1}.

\subsection{Proof of \thmref{mainresult2}}
Let us recall that
\begin{eqnarray}
\label{fin0}
\Vert X^h(t_m)-X^h_m\Vert^2_{L^2(\Omega,H)}\leq 25\sum_{i=0}^{4}\Vert III_i\Vert^2_{L^2(\Omega,H)},
\end{eqnarray}
where $III_0$ and $III_1$  are exactly the same as $II_0$ and $II_1$ respectively. Therefore from  \eqref{deb1} and  \eqref{mar6}  we have
\begin{eqnarray}
\label{fin1}
&&\Vert III_0\Vert_{L^2(\Omega,H)}+\Vert III_1\Vert_{L^2(\Omega,H)}\leq C\Delta t^{\frac{\beta}{2}}+C\Delta t\Vert X^h(t_{m-1})-X^h_{m-1}\Vert_{L^2(\Omega,H)}. 
\end{eqnarray}
It remains to re-estimate  $III_3$ in order to achieve higher order convergence rate. We also need to re-estimate the terms involving the noise $III_2$ and $III_4$, which are given below
\begin{eqnarray*}
III_2&=&\int_{t_{m-1}}^{t_m}\left(e^{(t_m-s)A_{h,m-1}}-S^{m-1}_{h,\Delta t}\right)P_hdW(s),\\
III_3&=&\sum_{i=1}^{m-1}\int_{t_{m-i-1}}^{t_{m-i}}\left(\prod_{j=m-i}^{m-1}e^{\Delta tA_{h,j}}\right)e^{(t_{m-i}-s)A_{h,m-i-1}}G^h_{m-i-1}\left(X^h(s)\right)ds\nonumber\\
&&-\sum_{i=1}^{m-1}\int_{t_{m-i-1}}^{t_{m-i}}\left(\prod_{j=m-i}^{m-1}S^j_{h,\Delta t}\right)G^h_{m-i-1}\left(X^h_{m-i-1}\right)ds\nonumber\\
III_4&=&\sum_{i=1}^{m-1}\int_{t_{m-i-1}}^{t_{m-i}}\left(\prod_{j=m-i}^{m-1}e^{\Delta tA_{h,j}}\right)e^{(t_{m-i}-s)A_{h,m-i-1}}P_hdW(s)\nonumber\\
&&-\sum_{i=1}^{m-1}\int_{t_{m-i-1}}^{t_{m-i}}\left(\prod_{j=m-i}^{m-1}S^j_{h,\Delta t}\right)P_hdW(s).
\end{eqnarray*}

\subsubsection{Estimation of $III_2$}
We can split $III_2$ in two terms as follows:
\begin{eqnarray}
\label{dor1}
III_2&=&\int_{t_{m-1}}^{t_m}\left[e^{(t_m-s)A_{h,m-1}}-e^{\Delta tA_{h,m-1}}\right]P_hdW(s)+\int_{t_{m-1}}^{t_m}\left[e^{\Delta tA_{h,m-1}}-S^{m-1}_{h,\Delta t}\right]P_hdW(s)\nonumber\\
&:=&III_{21}+III_{22}.
\end{eqnarray}
Using  it\^{o} isometry, \lemref{perturbedsemi},  \cite[Lemma 9 (i) \& (ii)]{Antjd1} and \lemref{lemma11} (i), it holds that
\begin{eqnarray}
\label{dor2}
&&\Vert III_{21}\Vert^2_{L^2(\Omega,H)}\nonumber\\
&=&\int_{t_{m-1}}^{t_m}\mathbb{E}\left[\left\Vert\left(e^{(t_m-s)A_{h,m-1}}-e^{(t_m-t_{m-1})A_{h,m-1}}\right)P_hQ^{1/2}\right\Vert^2_{\mathcal{L}_2(H)}\right]ds\nonumber\\
&\leq& \int_{t_{m-1}}^{t_m}\mathbb{E}\left[\left\Vert e^{(t_m-s)A_{h,m-1}}\left(\mathbf{I}-e^{(s-t_{m-1})A_{h,m-1}}\right)A_h^{\frac{1-\beta}{2}}\right\Vert^2_{L(H)}\left\Vert A_h^{\frac{\beta-1}{2}}P_hQ^{\frac{1}{2}}\right\Vert^2_{\mathcal{L}_2(H)}\right]ds\nonumber\\
&\leq& \int_{t_{m-1}}^{t_m}\mathbb{E}\left[\left\Vert e^{(t_m-s)A_{h,m-1}}A_h^{\frac{1-\epsilon}{2}}\right\Vert^2_{L(H)}\left\Vert A_h^{\frac{-1+\epsilon}{2}}\left(\mathbf{I}-e^{(s-t_{m-1})A_{h,m-1}}\right)A_h^{\frac{1-\beta}{2}}\right\Vert^2_{L(H)}\right.\nonumber\\
&&\left.\times\left\Vert A_h^{\frac{\beta-1}{2}}P_hQ^{\frac{1}{2}}\right\Vert^2_{\mathcal{L}_2(H)}\right]ds\nonumber\\
&\leq& C\int_{t_{m-1}}^{t_m}(t_m-s)^{-1+\epsilon}(s-t_{m-1})^{\beta-\epsilon}ds\leq C\Delta t^{\beta-\epsilon}\int_{t_{m-1}}^{t_m}(t_m-s)^{-1+\epsilon}ds\leq C\Delta t^{\beta}.
\end{eqnarray}
Applying   It\^{o} isometry, using \lemref{lemma10} (i) and \lemref{lemma11} (i) yields
\begin{eqnarray}
\label{dor3}
\Vert III_{22}\Vert^2_{L^2(\Omega,H)}&=&\int_{t_{m-1}}^{t_m}\mathbb{E}\left[\left\Vert\left(e^{\Delta tA_{h,m-1}}-S^{m-1}_{h,\Delta t}\right)P_hQ^{\frac{1}{2}}\right\Vert^2_{\mathcal{L}_2(H)}\right]ds\nonumber\\
&\leq& C\int_{t_{m-1}}^{t_m}\Delta t^{\beta-1}\left\Vert A_h^{\frac{\beta-1}{2}}P_hQ^{\frac{1}{2}}\right\Vert^2_{\mathcal{L}_2(H)}ds\leq C\Delta t^{\beta}.
\end{eqnarray}
Substituting \eqref{dor3}, \eqref{dor2} in \eqref{dor1} yields
\begin{eqnarray}
\label{fin2}
\Vert III_2\Vert^2_{L^2(\Omega, H)}\leq 2\Vert III_{21}\Vert^2_{L^2(\Omega,H)}+2\Vert III_{22}\Vert^2_{L^2(\Omega,H)}\leq C\Delta t^{\beta}.
\end{eqnarray}

\subsubsection{Estimation of $III_3$} 
Since $III_3$ is the same as $II_3$, it follows from \eqref{cont0} that
\begin{eqnarray}
\label{nlast0}
III_3=III_{31}+III_{32}+III_{33}+III_{34},
\end{eqnarray}
where $III_{31}$ $III_{32}$, $III_{33}$ and $III_{34}$ are respectively the same as $II_{31}$ $II_{32}$, $II_{33}$ and $II_{34}$. Therefore from \eqref{cont1}, \eqref{cont3} and \eqref{cont4} we have 
\begin{eqnarray}
\label{nlast1}
&&\Vert III_{31}\Vert_{L^2(\Omega,H)}+\Vert III_{33}\Vert_{L^2(\Omega,H)}+\Vert III_{34}\Vert_{L^2(\Omega,H)}\nonumber\\
&\leq& C\Delta t^{\beta}+C\Delta t\sum_{i=2}^{m-1}\Vert X^h(t_{m-i})-X^h_{m-i}\Vert_{L^2(\Omega,H)}.
\end{eqnarray}
To achieve convergence  order greater than $\frac{1}{2}$ we need to re-estimate $III_{32}$ by using  \assref{assumption6b}. Recall that $III_{32}$ is given by 
\begin{eqnarray}
\label{pa0}
III_{32}=\sum_{i=1}^{m-1}\int_{t_{m-i-1}}^{t_{m-i}}\left(\prod_{j=m-i}^{m-1}e^{\Delta tA_{h,j}}\right)\left(G^h_{m-i-1}\left(X^h(s)\right)-G^h_{m-i-1}\left(X^h(t_{m-i-1})\right)\right)ds.
\end{eqnarray}
Using  Taylor's formula in Banach space yields
{\small
\begin{eqnarray}
\label{cle0}
&&III_{32}\nonumber\\
&=&\sum_{i=1}^{m-1}\int_{t_{m-i-1}}^{t_{m-i}}\left(\prod_{j=m-i}^{m-1}e^{\Delta tA_{h,j}}\right)\left(e^{(s-t_{m-i-1})A_{h,m-i}}-\mathbf{I}\right)X^h(t_{m-i-1})ds\nonumber\\
&+&\sum_{i=1}^{m-1}\int_{t_{m-i-1}}^{t_{m-i}}\left(\prod_{j=m-i}^{m-1}e^{\Delta tA_{h,j}}\right)\left(G^h_{m-i-1}\right)'\left(X^h(t_{m-i-1})\right)\nonumber\\
&&\int_{t_{m-i-1}}^se^{(s-\sigma)A_{h,m-i-1}}\left(G^h_{m-i-1}(X^h(\sigma)\right)d\sigma ds\nonumber\\
&+&\sum_{i=1}^{m-1}\int_{t_{m-i-1}}^{t_{m-i}}\left(\prod_{j=m-i}^{m-1}e^{\Delta tA_{h,j}}\right)\left(G^h_{m-i-1}\right)'\left(X^h(t_{m-i-1})\right)\int_{t_{m-i-1}}^se^{(s-\sigma)A_{h,m-i-1}}P_hdW(\sigma)ds\nonumber\\
&+&\sum_{i=1}^{m-1}\int_{t_{m-i-1}}^{t_{m-i}}\left(\prod_{j=m-i}^{m-1}e^{\Delta tA_{h,j}}\right)R^h_{m-i-1}ds\nonumber\\
&=:&III_{32}^{(1)}+III_{32}^{(2)}+III_{32}^{(3)}+III_{32}^{(4)},
\end{eqnarray}
}
where the remainder $R^h_{m-i-1}$ is given by
\begin{eqnarray*}
R^h_{m-i-1}&:=&\int_0^1\left(G^h_{m-i-1}\right)''\left(X^h(t_{m-i-1})+\lambda\left(X^h(s)-X^h(t_{m-i-1})\right)\right)\nonumber\\
&&\left(X^h(s)-X^h(t_{m-i-1}),X^h(s)-X^h(t_{m-i-1})\right)(1-\lambda)d\lambda.
\end{eqnarray*}
Inserting an appropriate power of $A_h$, using  \lemref{lemma5} and \coref{corollary1}, it holds that
\begin{eqnarray}
\label{cle1}
\Vert III_{32}^{(1)}\Vert_{L^2(\Omega,H)}&\leq&\sum_{i=1}^{m-1}\int_{t_{m-i-1}}^{t_{m-i}}\left[\mathbb{E}\left\Vert\left(\prod_{j=m-i}^{m-1}e^{\Delta tA_{h,j}}\right)A_h^{1-\epsilon}\right\Vert^2_{L(H)}\right.\nonumber\\
&&\left.\times\left\Vert A_h^{-1+\epsilon}\left(e^{(s-t_{m-i-1})A_{h,m-i}}-\mathbf{I}\right)\right\Vert^2_{L(H)}\Vert X^h(t_{m-i-1})\Vert^2\right]^{\frac{1}{2}}ds\nonumber\\
&\leq& C\sum_{i=1}^{m-1}\int_{t_{m-i-1}}^{t_{m-i}}t_i^{-1+\epsilon}(s-t_{m-i-1})^{1-\epsilon}ds\nonumber\\
&\leq& C\Delta t^{1-\epsilon}\sum_{i=1}^{m-1}t_i^{-1+\epsilon}\Delta t\leq C\Delta t^{1-\epsilon}.
\end{eqnarray}
Using Lemmas \ref{lemma5}, \ref{lemma11} (ii) and  \coref{corollary1}   yields
\begin{eqnarray}
\label{cle2}
\Vert III_{32}^{(2)}\Vert^2_{L^2(\Omega,H)}&\leq& C\sum_{i=1}^{m-1}\int_{t_{m-i-1}}^{t_{m-i}}\left\Vert \int_{t_{m-i-1}}^se^{(s-\sigma)A_{h,m-i-1}}\left(G^h_{m-i-1}\right)(X^h(\sigma))d\sigma\right\Vert_{L^2(\Omega,H)}ds\nonumber\\
&\leq& C\sum_{i=1}^{m-1}\int_{t_{m-i-1}}^{t_{m-i}}(s-t_{m-i-1})ds\leq C\Delta t.
\end{eqnarray}
Since the expectation of the cross-product vanishes, using It\^{o} isometry, triangle inequality, H\"{o}lder inequality and \lemref{lemma5} yields 
{\small
\begin{eqnarray}
\label{petit0}
&&\Vert III_{32}^{(3)}\Vert^2_{L^2(\Omega,H)}\nonumber\\
&=&\mathbb{E}\left[\left\Vert\sum_{i=1}^{m-1}\int_{t_{m-i-1}}^{t_{m-i}}\left(\prod_{j=m-i}^{m-1}e^{\Delta tA_{h,j}}\right)\left(G^h_{m-i-1}\right)'\left(X^h(t_{m-i-1})\right)\int_{t_{m-i-1}}^se^{(s-\sigma)A_{h,m-i-1}}P_hdW(\sigma) ds\right\Vert^2\right]\nonumber\\
&=&\sum_{i=1}^{m-1}\mathbb{E}\left[\left\Vert\int_{t_{m-i-1}}^{t_{m-i}}\int_{t_{m-i-1}}^s\left(\prod_{j=m-i}^{m-1}e^{\Delta tA_{h,j}}\right)\left(G^h_{m-i-1}\right)'\left(X^h(t_{m-i-1})\right)e^{(s-\sigma)A_{h,m-i-1}}P_hdW(\sigma) ds\right\Vert^2\right]\nonumber\\
&\leq&\Delta t\sum_{i=1}^{m-1}\int_{t_{m-i-1}}^{t_{m-i}}\mathbb{E}\left[\left\Vert\int_{t_{m-i-1}}^s\left(\prod_{j=m-i}^{m-1}e^{\Delta tA_{h,j}}\right)\left(G^h_{m-i-1}\right)'\left(X^h(t_{m-i-1})\right)e^{(s-\sigma)A_{h,m-i-1}}P_hdW(\sigma) \right\Vert^2\right]ds\nonumber\\
&\leq&C\Delta t\sum_{i=1}^{m-1}\int_{t_{m-i-1}}^{t_{m-i}}\mathbb{E}\left[\left\Vert \left(\prod_{j=m-i}^{m-1}e^{\Delta tA_{h,j}}\right)\right\Vert_{L(H)}\right.\nonumber\\
&&\left.\left\Vert\int_{t_{m-i-1}}^s\left(G^h_{m-i-1}\right)'\left(X^h(t_{m-i-1})\right)e^{(s-\sigma)A_{h,m-i-1}}P_hdW(\sigma) \right\Vert^2\right]ds\nonumber\\
&\leq&C\Delta t\sum_{i=1}^{m-1}\int_{t_{m-i-1}}^{t_{m-i}}\int_{t_{m-i-1}}^s\mathbb{E}\left\Vert\left(G^h_{m-i-1}\right)'\left(X^h(t_{m-i-1})\right)e^{(s-\sigma)A_{h,m-i-1}}P_hQ^{\frac{1}{2}}\right\Vert^2_{\mathcal{L}_2(H)} d\sigma ds.
\end{eqnarray}
}
Using \lemref{lemma11}    yields
\begin{eqnarray}
\label{petit1}
&&\mathbb{E}\left\Vert\left(G^h_{m-i-1}\right)'\left(X^h(t_{m-i-1})\right)e^{(s-\sigma)A_{h,m-i-1}}P_hQ^{\frac{1}{2}}\right\Vert^2_{\mathcal{L}_2(H)}\nonumber\\
&=&\mathbb{E}\left\Vert\left(G^h_{m-i-1}\right)'\left(X^h(t_{m-i-1})\right)e^{(s-\sigma)A_{h,m-i-1}}A_h^{\frac{1-\beta}{2}}A_h^{\frac{\beta-1}{2}} P_hQ^{\frac{1}{2}}\right\Vert^2_{\mathcal{L}_2(H)}\nonumber\\
&\leq&\mathbb{E}\left\Vert\left(G^h_{m-i-1}\right)'\left(X^h(t_{m-i-1})\right)e^{(s-\sigma)A_{h,m-i-1}}A_h^{\frac{1-\beta}{2}}\right\Vert^2_{L(H)}\left\Vert A_h^{\frac{\beta-1}{2}} P_hQ^{\frac{1}{2}}\right\Vert^2_{\mathcal{L}_2(H)}\nonumber\\
&\leq&\mathbb{E}\left\Vert e^{(s-\sigma)A_{h,m-i-1}}A_h^{\frac{1-\beta}{2}}\right\Vert^2_{L(H)}\left\Vert A_h^{\frac{\beta-1}{2}} P_hQ^{\frac{1}{2}}\right\Vert^2_{\mathcal{L}_2(H)}\nonumber\\
&\leq & C(s-\sigma)^{\min(-1+\beta,0)}.
\end{eqnarray}

Substituting \eqref{petit1} in \eqref{petit0} yields 
\begin{eqnarray}
\label{cle3}
\Vert III_{32}^{(3)}\Vert^2_{L^2(\Omega,H)}\leq C\Delta t\sum_{i=1}^{m-1}\int_{t_{m-i-1}}^{t_{m-i}}\int_{t_{m-i-1}}^s(s-\sigma)^{\min(-1+\beta,0)}d\sigma ds\leq C\Delta t^{\min(1+\beta,2)}.
\end{eqnarray}
Using Lemmas \ref{lemma11} (ii) and \ref{regularity} yields
\begin{eqnarray}
\left\Vert A_h^{-\frac{\eta}{2}}R^h_{h,m-i-1}\right\Vert_{L^2(\Omega,H)}&\leq &C\left\Vert \left\Vert X^h(s)-X^h(t_{m-i-1})\right\Vert^2\right\Vert_{L^2(\Omega,H)}\nonumber\\
&\leq& C\left\Vert X^h(s)-X^h(t_{m-i-1})\right\Vert^2_{L^4(\Omega,H)}\leq  C\Delta t^{\min(\beta,1)}.
\end{eqnarray}
Therefore we obtain the following estimate for $III_{32}^{(4)}$
\begin{eqnarray}
\label{cle4}
\Vert III_{32}^{(4)}\Vert_{L^2(\Omega,H)}&\leq &C\Delta t^{\min(\beta,1)}\sum_{i=1}^{m-1}\int_{t_{m-i-1}}^{t_{m-i}}\mathbb{E}\left[\left\Vert\left(\prod_{j=m-i}^{m-1}e^{\Delta tA_{h,j}}\right)A_h^{\frac{\eta}{2}}\right\Vert^2_{L(H)}\right] ds\nonumber\\
&\leq& C\Delta t^{\min(\beta,1)}\sum_{i=1}^{m-1}t_{i}^{-\frac{\eta}{2}}\Delta t\leq C\Delta t^{\min(\beta,1)}.
\end{eqnarray}

Substituting \eqref{cle4}, \eqref{cle3}, \eqref{cle2} and \eqref{cle1} in \eqref{cle0} yields
\begin{eqnarray}
\label{pa2}
\Vert III_{32}\Vert_{L^2(\Omega,H)}
&\leq& \Vert III_{32}^{(1)}\Vert_{L^2(\Omega,H)}+\Vert III_{32}^{(2)}\Vert_{L^2(\Omega,H)}+\Vert III_{32}^{(3)}\Vert_{L^2(\Omega,H)}+\Vert III_{32}^{(4)}\Vert_{L^2(\Omega,H)}\nonumber\\
&\leq &C\Delta t^{\frac{\beta}{2}-\epsilon}.
\end{eqnarray}
Substituting \eqref{pa2} and \eqref{pa1} in \eqref{pa0} yields
\begin{eqnarray}
\label{nlast2}
\Vert III_{32}\Vert_{L^2(\Omega,H)}&\leq& \Vert III_{32}^{(1)}\Vert_{L^2(\Omega,H)}+\Vert III_{32}^{(2)}\Vert_{L^2(\Omega,H)}\leq C\Delta t^{\frac{\beta}{2}-\epsilon}.
\end{eqnarray}
Substituting \eqref{nlast2} and \eqref{nlast1} in \eqref{nlast0} yields 
\begin{eqnarray}
\label{fin8}
\Vert III_3\Vert_{L^2(\Omega,H)}\leq C\Delta t^{\frac{\beta}{2}-\epsilon}.
\end{eqnarray}

\subsubsection{Estimation  of $III_4$}
We can recast $III_4$ in two terms as follows
\begin{eqnarray}
\label{ter1}
 III_4&=&\sum_{i=1}^{m-1}\int_{t_{m-i-1}}^{t_{m-i}}\left(\prod_{j=m-i}^{m-1}e^{\Delta tA_{h,j}}\right)\left(e^{(t_{m-i}-s)A_{h,m-i-1}}-\mathbf{I}\right)P_hdW(s)\nonumber\\
&+&\sum_{i=1}^{m-1}\int_{t_{m-i-1}}^{t_{m-i}}\left[\left(\prod_{j=m-i}^{m-1}e^{\Delta tA_{h,j}}\right)-\left(\prod_{j=m-i}^{m-1}S^j_{h,\Delta t}\right)\right]P_hdW(s)\nonumber\\
&=:&III_{41}+III_{42}.
\end{eqnarray}
Since the expectation of the cross-product vanishes, using  It\^{o} isometry, \lemref{lemma11} (i), \cite[Lemma 9 (i) \& (iv)]{Antjd1}, \lemref{lemma5} and \cite[Lemma 10]{Antjd1} yields 
\begin{eqnarray}
\label{ter2}
&&\Vert III_{41}\Vert^2_{L^2(\Omega,H)}\nonumber\\
&=&\mathbb{E}\left\Vert\sum_{i=1}^{m-1}\int_{t_{m-i-1}}^{t_{m-i}}\left(\prod_{j=m-i}^{m-1}e^{\Delta tA_{h,j}}\right)\left(e^{(t_{m-i}-s)A_{h,m-i-1}}-\mathbf{I}\right)P_hdW(s)\right\Vert^2\nonumber\\
&=&\sum_{i=1}^{m-1}\mathbb{E}\left\Vert \left(\prod_{j=m-i}^{m-1}e^{\Delta tA_{h,j}}\right)\int_{t_{m-i-1}}^{t_{m-i}}\left(e^{(t_{m-i}-s)A_{h,m-i-1}}-\mathbf{I}\right)P_hdW(s)\right\Vert^2\nonumber\\
&\leq&\sum_{i=1}^{m-1}\mathbb{E}\left[\left\Vert \left(\prod_{j=m-i}^{m-1}e^{\Delta tA_{h,j}}\right)A_h^{\frac{1-\epsilon}{2}}\right\Vert^2_{L(H)}\left\Vert\int_{t_{m-i-1}}^{t_{m-i}}A_h^{\frac{-1+\epsilon}{2}}\left(e^{(t_{m-i}-s)A_{h,m-i-1}}-\mathbf{I}\right)P_hdW(s)\right\Vert^2\right]\nonumber\\
&\leq&C\sum_{i=1}^{m-1}t_i^{-1+\epsilon}\mathbb{E}\left\Vert\int_{t_{m-i-1}}^{t_{m-i}}A_h^{\frac{-1+\epsilon}{2}}\left(e^{(t_{m-i}-s)A_{h,m-i-1}}-\mathbf{I}\right)P_hdW(s)\right\Vert^2\nonumber\\
&\leq&C\sum_{i=1}^{m-1}t_i^{-1+\epsilon}\int_{t_{m-i-1}}^{t_{m-i}}\mathbb{E}\left\Vert A_h^{\frac{-1+\epsilon}{2}}\left(e^{(t_{m-i}-s)A_{h,m-i-1}}-\mathbf{I}\right)P_hQ^{\frac{1}{2}}\right\Vert^2_{\mathcal{L}_2(H)}ds\nonumber\\
&\leq&C\sum_{i=1}^{m-1}t_i^{-1+\epsilon}\int_{t_{m-i-1}}^{t_{m-i}}\mathbb{E}\left[\left\Vert A_h^{\frac{-1+\epsilon}{2}}\left(e^{(t_{m-i}-s)A_{h,m-i-1}}-\mathbf{I}\right)A_h^{\frac{1-\beta}{2}}\right\Vert^2_{L(H)}\left\Vert A_h^{\frac{\beta-1}{2}} P_hQ^{\frac{1}{2}}\right\Vert^2_{\mathcal{L}_2(H)}\right]ds\nonumber\\
&\leq& C\sum_{i=1}^{m-1}\int_{t_{m-i-1}}^{t_{m-i}}t_i^{-1+\epsilon}(t_{m-i}-s)^{\beta-\epsilon}ds\leq C\Delta t^{\beta-\epsilon}\sum_{i=1}^{m-1}t_i^{-1+\epsilon}\Delta t\leq C\Delta t^{\beta-\epsilon}.
\end{eqnarray}
Since the expectation of the cross-product vanishes, using Cauchy-Schwartz inequality yields
\begin{eqnarray*}
\Vert III_{42}\Vert^2_{L^2(\Omega,H)}
&=&\mathbb{E}\left\Vert \sum_{i=1}^{m-1}\int_{t_{m-i-1}}^{t_{m-i}}\left[\left(\prod_{j=m-i}^{m-1}e^{\Delta tA_{h,j}}\right)-\left(\prod_{j=m-i}^{m-1}S^j_{h,\Delta t}\right)\right]P_hdW(s)\right\Vert^2\nonumber\\
&=& \sum_{i=1}^{m-1}\mathbb{E}\left\Vert\left[\left(\prod_{j=m-i}^{m-1}e^{\Delta tA_{h,j}}\right)-\left(\prod_{j=m-i}^{m-1}S^j_{h,\Delta t}\right)\right]\int_{t_{m-i-1}}^{t_{m-i}} P_hdW(s)\right\Vert^2\nonumber\\
&\leq& \sum_{i=1}^{m-1}\left(\mathbb{E}\left\Vert\left[\left(\prod_{j=m-i}^{m-1}e^{\Delta tA_{h,j}}\right)-\left(\prod_{j=m-i}^{m-1}S^j_{h,\Delta t}\right)\right]A_h^{\frac{1-\beta}{2}}\right\Vert^4_{L(H)}\right)^{\frac{1}{2}}\nonumber\\
&\times&\left(\mathbb{E}\left\Vert\int_{t_{m-i-1}}^{t_{m-i}}A_h^{\frac{\beta-1}{2}} P_hdW(s)\right\Vert^4\right)^{\frac{1}{2}}.
\end{eqnarray*}
Using the Burkh\"{o}lder-Davis-Gundy inequality (\cite[Lemma 5.1]{Raphael}) and \lemref{lemma11} (i) yields
\begin{eqnarray}
\label{ter3}
\Vert III_{42}\Vert^2_{L^2(\Omega,H)}
&\leq& \sum_{i=1}^{m-1}\left(\mathbb{E}\left\Vert\left[\left(\prod_{j=m-i}^{m-1}e^{\Delta tA_{h,j}}\right)-\left(\prod_{j=m-i}^{m-1}S^j_{h,\Delta t}\right)\right]A_h^{\frac{1-\beta}{2}}\right\Vert^4_{L(H)}\right)^{\frac{1}{2}}\nonumber\\
&\times&\int_{t_{m-i-1}}^{t_{m-i}}\mathbb{E}\left\Vert A_h^{\frac{\beta-1}{2}} P_hQ^{\frac{1}{2}}\right\Vert^2_{\mathcal{L}_2(H)}ds\nonumber\\
&\leq&C \sum_{i=1}^{m-1}\int_{t_{m-i-1}}^{t_{m-i}}\left(\mathbb{E}\left\Vert\left[\left(\prod_{j=m-i}^{m-1}e^{\Delta tA_{h,j}}\right)-\left(\prod_{j=m-i}^{m-1}S^j_{h,\Delta t}\right)\right]A_h^{\frac{1-\beta}{2}}\right\Vert^4_{L(H)}\right)^{\frac{1}{2}}.
\end{eqnarray}
If $0<\beta<1$ then applying \lemref{lemma10} (ii) yields
\begin{eqnarray}
\label{ter4}
\Vert III_{42}\Vert^2_{L^2(\Omega,H)}
&\leq& C\Delta t+ C\sum_{i=2}^{m-1}\int_{t_{m-i-1}}^{t_{m-i}}\Delta t^{1-\epsilon}t_{i-1}^{-1+\epsilon}ds\nonumber\\
&\leq& C\Delta t+ C\Delta t^{1-\epsilon}\sum_{i=2}^{m-1}t_{i-1}^{-1+\epsilon}\Delta t\leq C\Delta t^{1-\epsilon}.
\end{eqnarray}
If $\beta\in[1,2]$ then applying \lemref{lemma10} (iii) yields
\begin{eqnarray}
\label{ter5}
\Vert III_{42}\Vert^2_{L^2(\Omega,H)}&\leq& C\Delta t^{\beta}+ C\sum_{i=2}^{m-1}\int_{t_{m-i-1}}^{t_{m-i}}\Delta t^{\beta-\epsilon}\;t_{i-1}^{-1+\epsilon}ds\leq C\Delta t^{\beta-\epsilon}.
\end{eqnarray}
Therefore for all $\beta\in(0,2]$ it holds that
\begin{eqnarray}
\label{ter6}
\Vert III_{42}\Vert^2_{L^2(\Omega,H)}&\leq& C\Delta t^{\beta-\epsilon}.
\end{eqnarray}
Substituting \eqref{ter6} and \eqref{ter2} in \eqref{ter1} yields 
\begin{eqnarray}
\label{fin9}
\Vert III_4\Vert^2_{L^2(\Omega,H)}\leq 2\Vert III_{41}\Vert^2_{L^2(\Omega,H)}+2\Vert III_{42}\Vert^2_{L^2(\Omega,H)}\leq C\Delta t^{\beta-\epsilon}.
\end{eqnarray}
Substituting \eqref{fin9}, \eqref{fin8}, \eqref{fin2} and \eqref{fin1} in \eqref{fin0} yields 
\begin{eqnarray*}
\left\Vert X^h(t_m)-X^h_m\right\Vert^2_{L^2(\Omega,H)}\leq C\Delta t^{\beta-2\epsilon}+C\Delta t\sum_{i=1}^{m-1}\left\Vert X^h(t_{m-i})-X^h_{m-i}\right\Vert^2_{L^2(\Omega,H)}.\nonumber
\end{eqnarray*}
Applying the discrete Gronwall lemma yields
\begin{eqnarray*}
\left\Vert X^h(t_m)-X^h_m\right\Vert_{L^2(\Omega,H)}\leq C\Delta t^{\frac{\beta}{2}-\epsilon}.
\end{eqnarray*}
This completes the proof of \thmref{mainresult2}.

\section{Numerical Simulations}
\label{numexperiment}
We consider the following  stochastic  reactive dominated advection
diffusion equation  with  constant diagonal difussion tensor  
\begin{eqnarray}
\label{reactiondif1}
dX=\left[\nabla \cdot (\mathbf{D}\nabla X)-\nabla \cdot(\mathbf{q}X)-\dfrac{10 X}{ X +1}\right]dt+b(X)dW,\quad
\mathbf{D}=\left( \begin{array}{cc}
             10^{-1}&0\\
             0& 10^{-2}
             \end{array}\right).
\end{eqnarray}
with  mixed Neumann-Dirichlet boundary conditions on $\Lambda=[0,L_1]\times[0,L_2]$. 
The Dirichlet boundary condition is $X=1$ at $\Gamma=\{ (x,y) :\; x =0\}$ and 
we use the homogeneous Neumann boundary conditions elsewhere.
The eigenfunctions $ \{e_{i,j} \} =\{e_{i}^{(1)}\otimes e_{j}^{(2)}\}_{i,j\geq 0}
$ of  the covariance operator $Q$ are the same as for  Laplace operator $-\varDelta$  with homogeneous boundary condition  and are given by 
\begin{eqnarray*}
e_{0}^{(l)}(x)=\sqrt{\dfrac{1}{L_{l}}},\qquad 
e_{i}^{(l)}(x)=\sqrt{\dfrac{2}{L_{l}}}\cos\left(\dfrac{i \pi }{L_{l}} x\right),\quad l \in \left\lbrace 1, 2 \right\rbrace,\, x\in \Lambda,\quad 
 i\in\mathbb{N}.
\end{eqnarray*}
We assume that the noise can be represented as 
 \begin{eqnarray}
  \label{eq:W1}
  W(x,t)=\underset{i \in  \mathbb{N}^{2}}{\sum}\sqrt{\lambda_{i,j}}e_{i,j}(x)\beta_{i,j}(t), 
\end{eqnarray}
where $\beta_{i,j}(t)$ are
independent and identically distributed standard Brownian motions,  $\lambda_{i,j}$, $(i,j)\in \mathbb{N}^{2}$ are the eigenvalues  of $Q$, with
\begin{eqnarray}
\label{noise2}
 \lambda_{i,j}=\left( i^{2}+j^{2}\right)^{-(\beta +\epsilon)}, \, \beta>0,
\end{eqnarray} 
in the representation \eqref{eq:W1} for some small $\epsilon>0$. When dealing with additive noise, we take $b(u)=1$, so \assref{assumption6a} is obviously satisfied for any $\beta\in(0,2]$. 
When dealing with multiplicative noise, we take $b(u)=u$ in \eqref{nemystskii}, Therefore, from \cite[Section 4]{Arnulf1} it follows that the operators  $B$ defined by \eqref{nemystskii}
fulfills  obviously Assumptions \ref{assumption4} and \ref{assumption5}.
For both additive and multiplicative noise, the function $F(X)= -\dfrac{10 X}{1+X}$
obviously satisfies the gobal Lipschitz condition  in  \assref{assumption3} and \assref{assumption6b}.
We obtain the Darcy velocity field $\mathbf{q}=(q_i)$  by solving the following  system
\begin{equation}
  \label{couple1}
  \nabla \cdot\mathbf{q} =0, \qquad \mathbf{q}=-\mathbf{\dfrac{k}{\mu}} \nabla p,
\end{equation}
with  Dirichlet boundary conditions on 
$\Gamma_{D}^{1}=\left\lbrace 0,L_1 \right\rbrace \times \left[
  0,L_2\right] $ and Neumann boundary conditions on
$\Gamma_{N}^{1}=\left( 0,L_1\right)\times\left\lbrace 0,L_2\right\rbrace $ such that 
\begin{eqnarray*}
p&=&\left\lbrace \begin{array}{l}
1 \quad \text{in}\quad \left\lbrace 0 \right\rbrace \times\left[ 0,L_2\right]\\
0 \quad \text{in}\quad \left\lbrace L_1 \right\rbrace \times\left[ 0,L_2\right]
\end{array}\right. 
\end{eqnarray*}
and $- \mathbf{k} \,\nabla p (\mathbf{x},t)\,\cdot \mathbf{n} =0$ in  $\Gamma_{N}^{1}$. 
Note that $\mathbf{k}$ is the permeability tensor.  We use a random permeability field as in \cite{Antofirst} and take $\mu=10$. 
The finite volume method viewed  as a finite element method (see \cite{Antonio3}) is used for the advection and the finite element method is used for the remainder.
In the legends of our graphs, we use the following notations:
\begin{enumerate}
 \item  'Rosenbrock-A-noise'  is used  for graphs from our Rosenbrock scheme with additive noise.
 \item 'Rosenbrock-M-noise'  is used for graphs from our Rosenbrock scheme with multiplicative noise.
 \item  'Expo-Rosenbrock-A-noise' is used for graphs of stochastic exponential Rosenbrock scheme  presented in \cite{Antjd1} with additive noise.
 \item  'Expo-Rosenbrock-M-noise' is used for graphs of stochastic exponential Rosenbrock scheme  presented in \cite{Antjd1} with multiplicative noise.
\end{enumerate}
 \begin{figure}[h!]
\begin{center}
   \subfigure[]{
     \label{FIGIa}
     \includegraphics[width=0.4\textwidth]{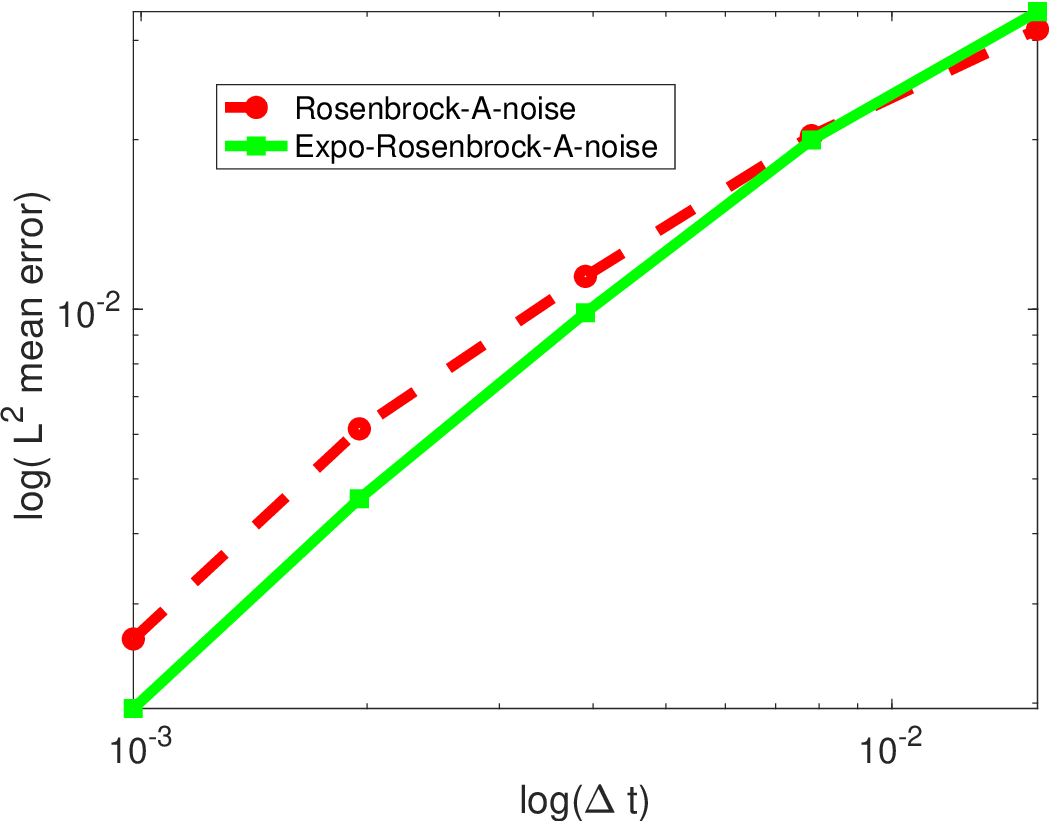}}
   \subfigure[]{
     \label{FIGIb}
     \includegraphics[width=0.4\textwidth]{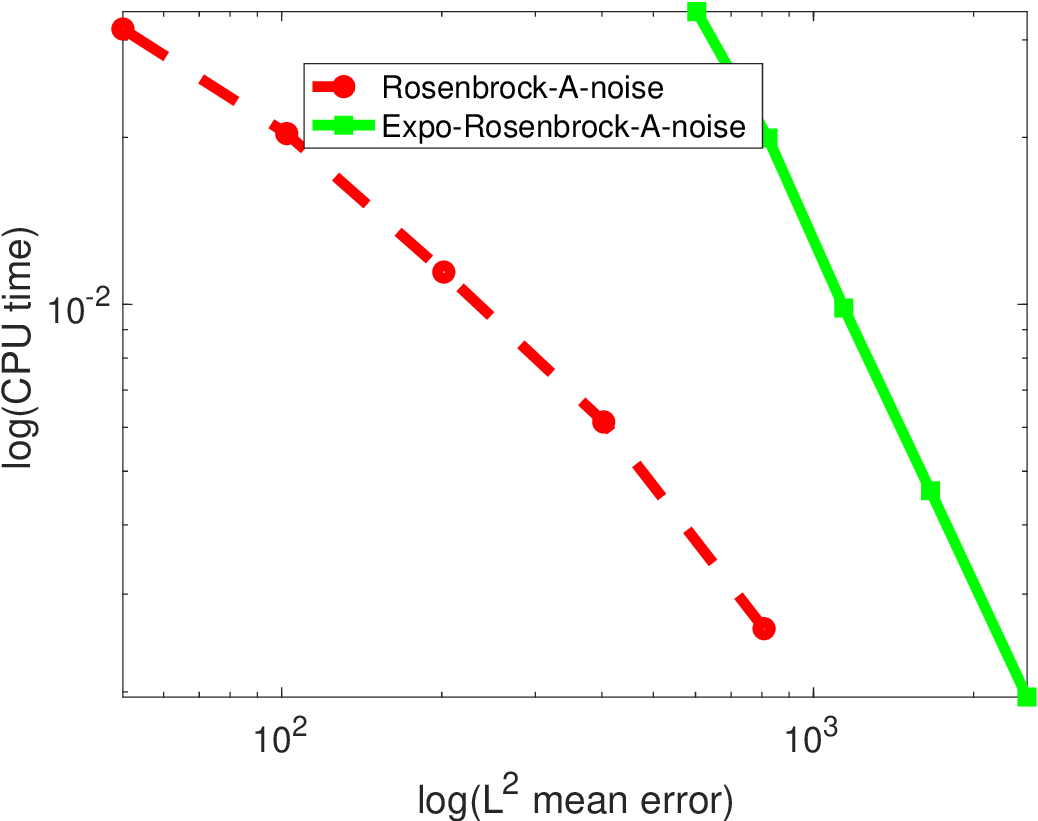}}
     \subfigure[]{
    \label{FIGIc}
    \includegraphics[width=0.4\textwidth]{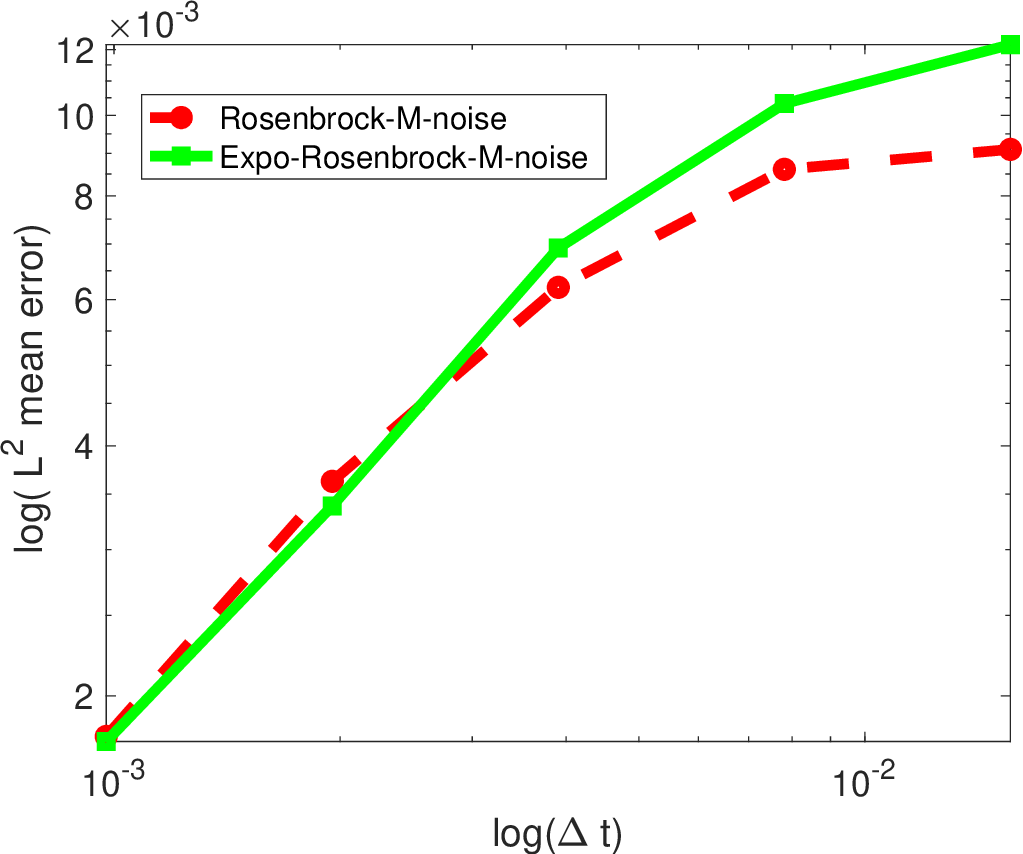}}
    \subfigure[]{
    \label{FIGId}
    \includegraphics[width=0.4\textwidth]{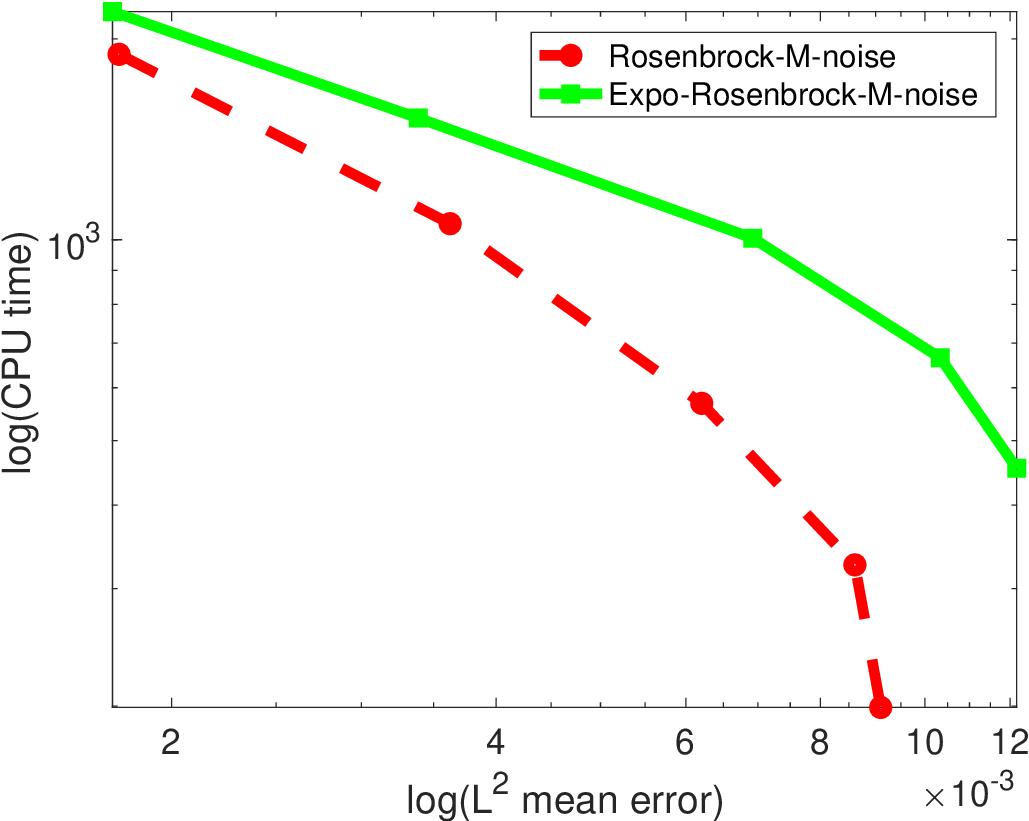}}
    \caption{Convergence in the root mean square $L^{2}$ norm at $T=1$ as a
    function of $\Dt$ for additive noise (a) and multiplicative noise (c). We take
    $\beta=2$, and  $\epsilon=10^{-1}$ in relation \eqref{noise2} and use 80 realizations.  
    Graph (b)  and graph (d) show  the CPU time per sample versus the  root mean square $L^{2}$ errors for additive noise  and multiplicative noise respectively. 
    } 
  \label{FIGI} 
  \end{center}
  \end{figure}
   \begin{figure}[h!]
\begin{center}
   \subfigure[]{
\includegraphics[width=0.48\textwidth]{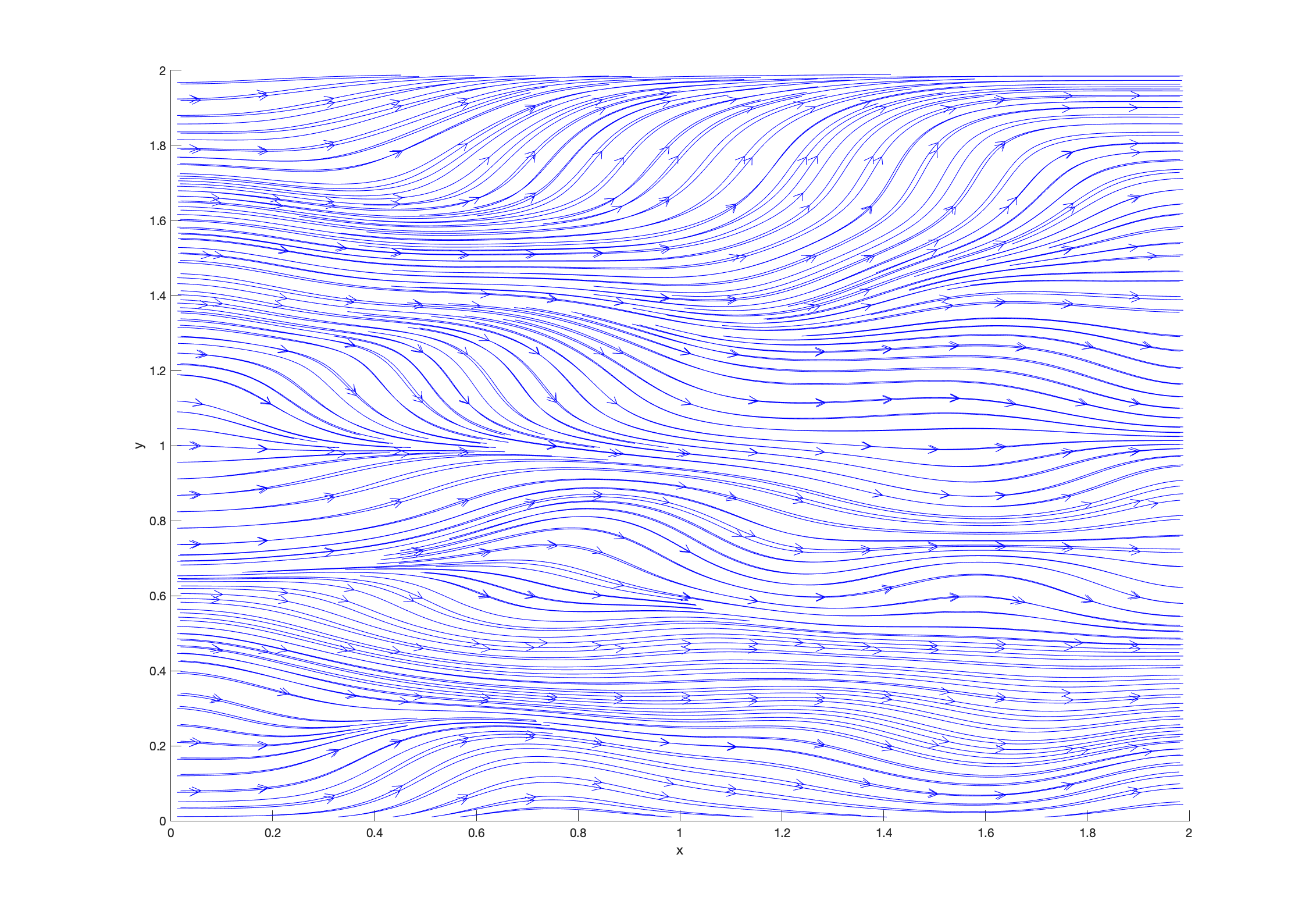}}
    \subfigure[]{
    \label{FIGId}
    \includegraphics[width=0.48\textwidth]{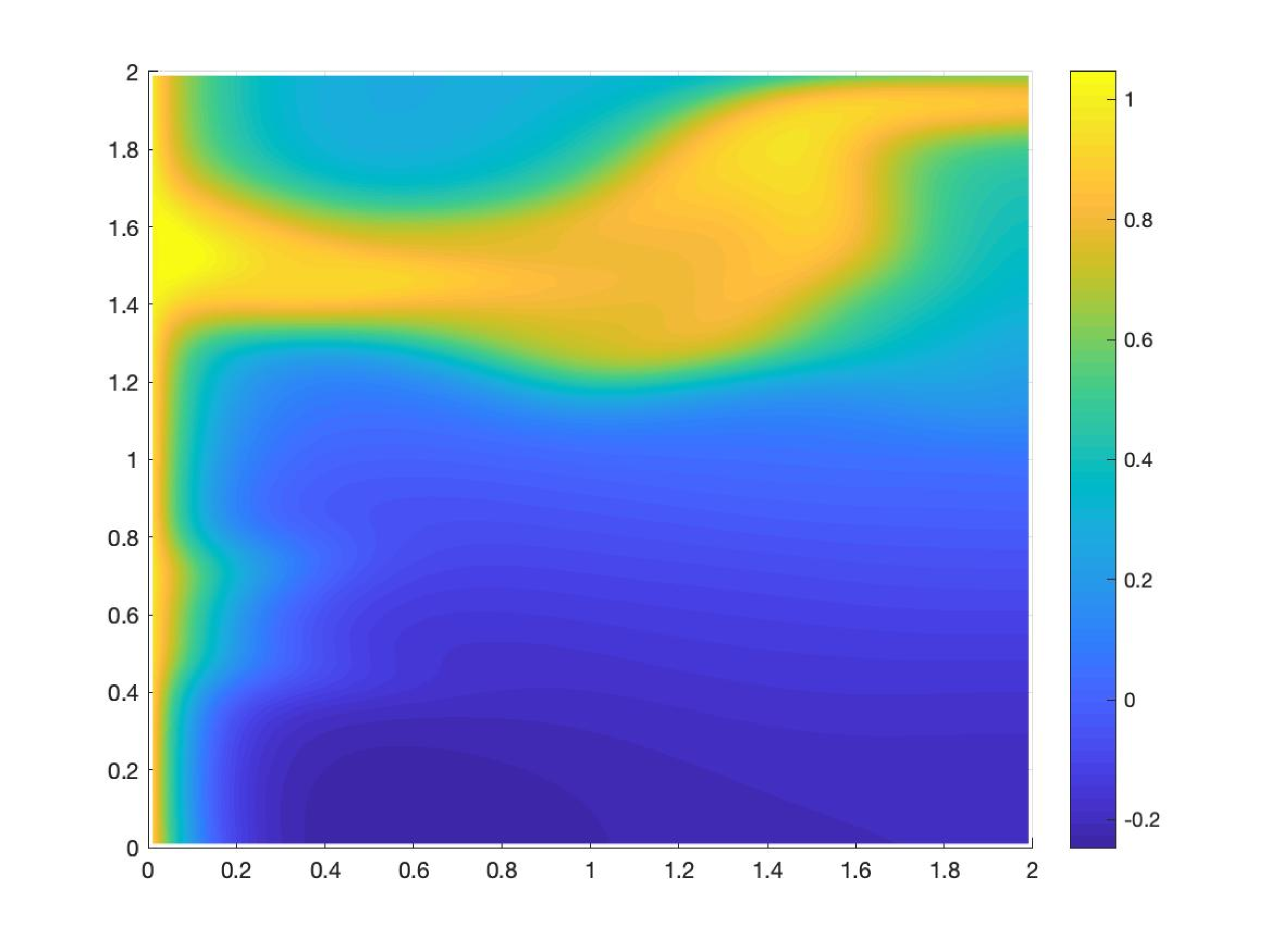}}
     \caption{(a)  The streamline of velocity (a) and  a sample of the numerical  solution  with the  stochastic Rosenbrock scheme for additive noise.}
     \label{FIGII} 
      \end{center}
    \end{figure}
  We take $L_1= 2$ and $L_2=2$ and our reference solutions samples are numerical solutions with time step  $\Delta t = 1/ 2048$. 
The errors are computed at the final time $T=1$.
The initial  solution  is $X_0=0$, so we can therefore expect high orders convergence, which depend  only on the noise term. 
For both additive and multiplicative noise, we use $\beta =2$ and $\epsilon=10^{-1}$. The streamline of velocity is given at \figref{FIGII}(a) while  a sample  of the numerical solution  with the  stochastic Rosenbrock scheme for additive noise
is given at \figref{FIGII}(b).
In \figref{FIGI}(a) and \figref{FIGI}(c),  the graphs  of strong errors versus the time steps are plotting  for stochastic Rosenbrock scheme and  exponential 
 Rosenbrock for additive noise and multiplicative noise respectively.
The orders of convergence are $0.59$ (exponential Rosenbrock scheme) and $0.55$ (Rosenbrock scheme) for multiplicative noise,    $ 1.03$ (exponential Rosenbrock scheme) 
 and $0.92$ (Rosenbrock scheme) for additive noise, which are close to $0.5$ and $1$ 
in our theoretical results in \thmref{mainresult1} and \thmref{mainresult2} respectively. 
The implementation of the stochastic  Rosenbrock-type scheme is straightforward  and only  need the resolution of a linear system of  equations at each time step. For
efficiency, all linear systems are solved using the Matlab function bicgstab  coupled with ILU(0) preconditioners with no fill-in. 
The ILU(0) are  done on the deterministic part of the  the matrice $A_{h}$, that is $(I+\Delta t A_h)$, at each time step. 
\figref{FIGI}(b)  and  \figref{FIGI}(d) show the mean of CPU time per sample versus the  root mean square $L^{2}$ errors corresponding for \figref{FIGI}(a)(additive noise) 
and \figref{FIGI}(c)(multiplicative noise) respectively.
As we can observe, the novel  stochastic Rosenbrock scheme is more efficient than the stochastic exponential Rosenbrock scheme,  thanks to the preconditioners.

\section*{Acknowledgement}
J. D. Mukam acknowledges  the support of the TU Chemnitz and thanks Prof. Dr. Peter Stollmann for his constant encouragement.
A. Tambue was supported by  the "Robert Bosch Stiftung" through the "AIMS ARETE Chair programme" (Grant No 11.5.8040.0033.0). 
We would like to thank Prof. Dr. Thomas Kalmes for a very useful discussions. We would also like to thank the reviewers for their careful readings which helped to improve this paper. 


\end{document}